\newtheorem{defin}{Definition}
\newtheorem{lemma}{Lemma}
\newtheorem{prop}{Proposition}
\newtheorem{theo}{Theorem}
\newenvironment{proof}{\medskip\par\noindent{\bf Proof}}{\hfill $\Box$
\medskip\par}
\begin{document}
\title{On parametric Gevrey asymptotics for initial value problems with infinite order irregular singularity and linear fractional
transforms}
\author{{\bf A. Lastra, S. Malek}\\
University of Alcal\'{a}, Departamento de F\'{i}sica y Matem\'{a}ticas,\\
Ap. de Correos 20, E-28871 Alcal\'{a} de Henares (Madrid), Spain,\\
University of Lille, Laboratoire Paul Painlev\'e,\\
59655 Villeneuve d'Ascq cedex, France,\\
{\tt alberto.lastra@uah.es}\\
{\tt Stephane.Malek@math.univ-lille1.fr }}
\date{July, 16 2018}
\maketitle
\thispagestyle{empty}
{ \small \begin{center}
{\bf Abstract}
\end{center}
This paper is a continuation of the work \cite{lama1} where parametric Gevrey asymptotics for singularly perturbed nonlinear PDEs has been studied.
Here, the partial differential operators are combined with particular Moebius transforms in the time variable. As a result, the leading term of the main
problem needs to be regularized by means of a singularly perturbed infinite order formal irregular operator that allows us to construct
a set of genuine solutions in the form of a Laplace transform in time and inverse Fourier transform in space. Furthermore, we obtain Gevrey
asymptotic expansions for these solutions of some order $K>1$ in the perturbation parameter.
\medskip

\noindent Key words: asymptotic expansion, Borel-Laplace transform, Fourier transform, initial value problem, difference equation,
formal power series,
nonlinear integro-differential equation, nonlinear partial differential equation, singular perturbation. 2010 MSC: 35R10, 35C10, 35C15, 35C20.}
\bigskip \bigskip

\section{Introduction}
Within this paper, we focus on a family of nonlinear singularly perturbed equations which combines linear fractional transforms, partial derivatives
and differential operators of infinite order of the form
\begin{multline}
Q(\partial_{z})u(t,z,\epsilon) = \exp( \alpha \epsilon^{k} t^{k+1}\partial_{t} )R(\partial_{z})u(t,z,\epsilon)
+ P(t,\epsilon,\{ m_{\kappa,t,\epsilon} \}_{\kappa \in I},\partial_{t},\partial_{z})u(t,z,\epsilon)\\
+ Q_{1}(\partial_{z})u(t,z,\epsilon)Q_{2}(\partial_{z})u(t,z,\epsilon) + f(t,z,\epsilon) \label{main_eq_intro}
\end{multline}
where $\alpha,k>0$ are real numbers, $Q(X),R(X),Q_{1}(X),Q_{2}(X)$ stand for polynomials with complex coefficients and
$P(t,\epsilon,\{ U_{\kappa} \}_{\kappa \in I},V_{1},V_{2})$ represents a polynomial in $t,V_{1},V_{2}$, linear in $U_{\kappa}$, with
holomorphic coefficients w.r.t $\epsilon$ near the origin in $\mathbb{C}$, where the symbol $m_{\kappa,t,\epsilon}$ denotes a Moebius operator
acting on the time variable through
$$ m_{\kappa,t,\epsilon}u(t,z,\epsilon) = u( \frac{t}{1 + \kappa \epsilon t},z,\epsilon) $$
for $\kappa$ belonging to some finite subset $I$ of the positive real numbers $\mathbb{R}_{+}^{\ast}$. The forcing term $f(t,z,\epsilon)$ embodies
an analytic function in the vicinity of the origin relatively to $(t,\epsilon)$ and holomorphic w.r.t $z$ on a horizontal strip in $\mathbb{C}$
of the form $H_{\beta} = \{ z \in \mathbb{C} / |\mathrm{Im}(z)| < \beta \}$ for some $\beta>0$.

This work is a continuation of our previous study \cite{lama1} where we aimed attention at the next problem
\begin{equation}
Q(\partial_{z})\partial_{t}y(t,z,\epsilon) = H(t,\epsilon,\partial_{t},\partial_{z})y(t,z,\epsilon) +
Q_{1}(\partial_{z})y(t,z,\epsilon)Q_{2}(\partial_{z})y(t,z,\epsilon) + f(t,z,\epsilon) \label{lama1_equation_intro}
\end{equation}
for given vanishing initial data $y(0,z,\epsilon) \equiv 0$, where $Q_{1},Q_{2},H$ stand for polynomials and $f(t,z,\epsilon)$ is built up as above.
Under suitable constraints on the components of (\ref{lama1_equation_intro}), by means of Laplace and inverse Fourier transforms, we constructed
a set of genuine bounded holomorphic solutions $y_{p}(t,z,\epsilon)$, $0 \leq p \leq \varsigma - 1$, for some integer $\varsigma \geq 2$, defined
on domains $\mathcal{T} \times H_{\beta} \times \mathcal{E}_{p}$, for some well selected bounded sector $\mathcal{T}$ with vertex at 0
and $\underline{\mathcal{E}} = \{ \mathcal{E}_{p} \}_{0 \leq p \leq \varsigma - 1}$ a set of bounded sectors whose union contains a full
neighborhood of 0 in $\mathbb{C}^{\ast}$. On the sectors $\mathcal{E}_{p}$, the solutions $y_{p}$ are shown to share w.r.t $\epsilon$ a common
asymptotic expansion $\hat{y}(t,z,\epsilon) = \sum_{n \geq 0} y_{n}(t,z) \epsilon^{n}$ that represents a formal power series with bounded
holomorphic coefficients $y_{n}(t,z)$ on $\mathcal{T} \times H_{\beta}$. Furthermore, this asymptotic expansion turns out to be (at most) of Gevrey
order $1/k'$, for some integer $k' \geq 1$ (see Definition 7 for an explanation of this terminology) that comes out in the highest order term
of the differential operator $H$ which is of irregular type in the sense of \cite{man2} and displayed as
\begin{equation}
L(t,\epsilon,\partial_{t},\partial_{z}) = \epsilon^{(\delta_{D}-1)k'} t^{(\delta_{D}-1)(k'+1)} \partial_{t}^{\delta_{D}} R_{D}(\partial_{z})
\label{L_term_lama1_intro}
\end{equation}
for some integer $\delta_{D} \geq 2$ and a polynomial $R_{D}(X)$. In the case when the aperture of $\mathcal{E}_{p}$ can be taken slightly larger
than $\pi/k'$, the function $\epsilon \mapsto y_{p}(t,z,\epsilon)$ represents the $k'-$sum of $\hat{y}$ on $\mathcal{E}_{p}$ as described in
Definition 7.

Through the present contribution, our purpose is to carry out a comparable statement namely the existence of sectorial holomorphic solutions
and associated asymptotic expansions as $\epsilon$ tends to 0 with controlled Gevrey bounds. However, the appearance of the nonlocal Moebius
operator $m_{\kappa,t,\epsilon}$ changes drastically the whole picture in comparison with our previous investigation \cite{lama1}. Namely,
according to our approach, a leading term of finite order $\delta_{D} \geq 2$ in time as above (\ref{L_term_lama1_intro}) is insufficient to
ensure the construction of actual holomorphic solutions to our initial problem (\ref{main_eq_intro}). We need to supplant it by an
exponential formal differential operator
$$ \exp( \alpha \epsilon^{k} t^{k+1}\partial_{t} )R(\partial_{z}) = \sum_{p \geq 0} \frac{ (\alpha \epsilon^{k})^{p} }{p!}
(t^{k+1}\partial_{t})^{(p)}R(\partial_{z})
$$
of infinite order w.r.t $t$, where $(t^{k+1}\partial_{t})^{(p)}$ represents the $p-$th iterate of the irregular differential operator $t^{k+1}\partial{t}$.
As a result, (\ref{main_eq_intro}) becomes singularly perturbed of irregular type but of infinite order in time.
The reason for the choice of such a new leading term will be put into light later on in the introduction.

A similar regularization procedure has been introduced in a different context in the paper \cite{bafrparati} in order to obtain entire solutions in
space for hydrodynamical PDEs such as the 3D Navier Stokes equations
$$
 \partial_{t}v(t,x) + v(t,x) \cdot \nabla v(t,x) = - \nabla p(t,x) - \mu \Delta v(t,x) \ \ , \ \ \nabla \cdot v(t,x) = 0
$$
for given $2\pi-$periodic initial data $v(0,x) = v_{0}(x_{1},x_{2},x_{3})$ on $\mathbb{R}^{3}$, where the usual Laplacian
$\Delta = \sum_{j=1}^{3} \partial_{x_{j}}^{2}$ is asked to be replaced by a (pseudo differential) operator $\exp( \lambda A^{1/2})$,
where $\lambda>0$ and $A$ stands for the differential operator $-\nabla^{2}$, whose Fourier symbol is $\exp( \lambda |k|)$ for
$k \in \mathbb{Z}^{3} \setminus \{ 0 \}$. The resulting problem is shown to possess a solution $v(t,x)$ that is analytic w.r.t $x$ in
$\mathbb{C}^{3}$ for all $t > 0$ whereas the solutions of the initial problem are expected to develop singularities in space.

Under appropriate restrictions on the shape of (\ref{main_eq_intro}) listed in the statement of Theorem 1, we can select
\begin{enumerate}
\item a set $\underline{\mathcal{E}}$ of bounded sectors as mentioned above, which forms a so-called good covering in $\mathbb{C}^{\ast}$ (see Definition 5),
\item a bounded sector $\mathcal{T}$ with bisecting direction $d=0$
\item and a set of directions $\mathfrak{d}_{p} \in (-\frac{\pi}{2},\frac{\pi}{2})$,
$0 \leq p \leq \varsigma - 1$ organized in a way that the halflines $L_{\mathfrak{d}_{p}} = \mathbb{R}_{+}\exp( \sqrt{-1} \mathfrak{d}_{p} )$
avoid the infinite set of zeros of the map $\tau \mapsto Q(im) - \exp(\alpha k \tau^{k})R(im)$ for all $m \in \mathbb{R}$,
\end{enumerate}
for which we can exhibit a family of bounded holomorphic solutions $u_{p}(t,z,\epsilon)$ on the products $\mathcal{T} \times
H_{\beta} \times \mathcal{E}_{p}$. Each solution $u_{p}$ can be expressed as a Laplace transform of some order $k$ and Fourier inverse transform
\begin{equation}
u_{p}(t,z,\epsilon) = \frac{k}{(2\pi)^{1/2}} \int_{-\infty}^{+\infty} \int_{L_{\mathfrak{d}_{p}}}
 w^{\mathfrak{d}_{p}}(u,m,\epsilon) \exp( -(\frac{u}{\epsilon t})^{k} ) e^{izm} \frac{du}{u} dm \label{defin_up_intro}
\end{equation}
where $w^{\mathfrak{d}_{p}}(u,m,\epsilon)$ stands for a function with (at most) exponential growth of order $k$ on a sector
containing $L_{\mathfrak{d}_{p}}$ w.r.t $u$, owning exponential decay w.r.t $m$ on $\mathbb{R}$ and relying analytically on $\epsilon$
near 0 (Theorem 1). Moreover, we show that the functions $\epsilon \mapsto u_{p}(t,z,\epsilon)$ admit a common asymptotic expansion
$\hat{u}(t,z,\epsilon) = \sum_{m \geq 0} h_{m}(t,z) \epsilon^{m}$ on $\mathcal{E}_{p}$ that defines a formal power series with bounded
holomorphic coefficients on $\mathcal{T} \times H_{\beta}$. Besides, it turns out that this asymptotic expansion is (at most) of Gevrey order $1/k$
and leads to $k-$summability on $\mathcal{E}_{p_0}$ provided that one sector $\mathcal{E}_{p_0}$ has opening larger than $\pi/k$ (Theorem 2).

Another substantial contrast between the problems (\ref{main_eq_intro}) and (\ref{lama1_equation_intro}) lies in the fact that the real number $k$ is asked to
be less than 1. The situation $k=1$ is not covered by the technics developped in this work and is postponed for future inspection. However, the special
case $k=1$ has already been explored by the authors for some families of Cauchy problems and gives rise to double scale structures
involving 1 and so-called $1^{+}$ Gevrey estimates, see \cite{lama3}, \cite{ma2}. Observe that if one performs the change of variable $t=1/s$ through
the change of function $u(t,z,\epsilon)=X(s,z,\epsilon)$ then the equation (\ref{main_eq_intro}) is mapped into a singularly perturbed PDE combined
with small shifts $T_{\kappa,\epsilon}X(s,z,\epsilon) = X(s + \kappa \epsilon,z,\epsilon)$, for $\kappa \in I$. This restriction concerning the Gevrey order
of formal expansions of the analytic solutions is rather natural in the context of difference equations as observed by B. Braaksma and B. Faber in
\cite{brfa}. Namely, if $A(x)$ stands for an invertible matrix of dimension $n \geq 1$ with meromorphic coefficients at $\infty$ and $G(x,y)$ represents
a holomorphic function in $1/x$ and $y$ near $(\infty,0)$, under suitable assumptions on the formal fundamental matrix $\hat{Y}(x)$ of the linear equation
$y(x+1) = A(x)y(x)$, any formal solution $\hat{y}(x) \in \mathbb{C}^{n}[[1/x]]$ of the nonlinear difference equation
$$ y(x+1) -A(x)y(x) = G(x,y(x)) $$
can be decomposed as a sum of formal series $\hat{y}(x) = \sum_{h=1}^{q} \hat{y}_{h}(x)$ where each $\hat{y}_{h}(x)$ turns out to be
$k_{h}-$summable on suitable sectors for some real numbers $0 < k_{h} \leq 1$, for $1 \leq h \leq q$.

In order to construct the family of solutions $\{ u_{p} \}_{0 \leq p \leq \varsigma - 1}$ mentioned above, we follow an approach that has been
successfully applied by B. Faber and M. van der Put, see \cite{favan}, in the study of formal aspects of differential-difference operators such as the
construction of Newton polygons, factorizations and the extraction of formal solutions and consists in considering the translation
$x \mapsto x + \kappa$ as a formal differential operator of infinite order through the Taylor expansion at $x$, see (\ref{Taylor_at_x}). In our framework,
the action of the Moebius transform $T \mapsto \frac{T}{1 + \kappa T}$ is seen as an irregular operator of infinite order that can be formally written in
the exponential form
$$ \exp( - \kappa T^{2}\partial_{T} ) = \sum_{p \geq 0} (-1)^{p} \frac{\kappa^{p}}{p!}(T^{2}\partial_{T})^{(p)}. $$
If one seeks for genuine solutions in the form (\ref{defin_up_intro}), then the so-called \emph{Borel map}
$w^{\mathfrak{d}_{p}}(\tau,m,\epsilon)$ is asked to solve a related convolution equation (\ref{main_integral_eq_w}) that involves infinite order
operators $\exp( -\kappa \mathcal{C}_{k}(\tau) )$ where $\mathcal{C}_{k}(\tau)$ denotes the convolution map given by
(\ref{defin_mathcalCk}). It turns out that this operator $\exp( -\kappa \mathcal{C}_{k}(\tau) )$ acts on spaces of analytic functions
$f(\tau)$ with (at most) exponential growth of order $k$, i.e bounded by $C \exp( \nu |\tau|^{k})$ for some $C,\nu>0$ but increases strictly
the type $\nu$ by a quantity depending on $\kappa,k$ and $\nu$ as shown in Proposition 2, (\ref{bds_exp_kappaCk_Sd}). It is worthwhile mentioning that the
use of precise bounds for the so-called Wiman special function $E_{\alpha,\beta}(z) = \sum_{n \geq 0} z^{n}/\Gamma(\beta + \alpha n)$ for
$\alpha,\beta>0$ at infinity is crucial in the proof that the order $k$ is preserved under the action of
$\exp( -\kappa \mathcal{C}_{k}(\tau) )$. Notice that this function also played a central role in proving multisummability properties of formal solutions
in a perturbation parameter to certain families of nonlinear PDEs as described in our previous work \cite{lama2}. As a result, the presence of an
exponential type term $\exp( \alpha k \tau^{k} )$ in front of the equation (\ref{main_integral_eq_w}) and therefore the infinite order
operator $\exp( \alpha \epsilon^{k} t^{k+1} \partial_{t})$ as leading term of (\ref{main_eq_intro}) seems unavoidable to compensate such an exponential
growth.

We mention that a similar strategy has been carried out by S. $\mathrm{\bar{O}}$uchi in \cite{ou} who considered functional equations
$$ u(z) + \sum_{j=2}^{m} a_{j}u(z + z^{p}\varphi_{j}(z)) = f(z) $$
where $p \geq 1$ is an integer, $a_{j} \in \mathbb{C}^{\ast}$ and $\varphi_{j}(z)$,$f(z)$ stand for holomorphic functions near $z=0$. He established the
existence of formal power series solutions $\hat{u}(z) \in \mathbb{C}[[z]]$ that are proved to be $p-$summable in suitable directions by solving an
associated convolution equation of infinite order for the Borel transform of order $p$ in analytic functions spaces with (at most) exponential
growth of order $p$ on convenient unbounded sectors. More recently, in a work in progress \cite{hitaya}, S. Hirose, H. Yamazawa and H. Tahara
are extending the above
statement to more general functional PDEs such as
$$ u(t,x) = a_{1}(t,x)(t\partial_{t})^{2}(u(t+t^{2},x)) + a_{2}(t,x)\partial_{x}u(t+t^{2},x) + f(t,x) $$
for analytic coefficients $a_{1},a_{2},f$ near $0 \in \mathbb{C}^{2}$ for which formal series solutions
$$\hat{u}(t,x) = \sum_{n \geq 1} u_{n}(x)t^{n}$$
can be built up that are shown to be multisummable in appropriate multidirections in the sense defined in \cite{ba}.

In a wider framework, there exists a gigantic literature dealing with infinite order PDEs/ODEs both in
mathematics and in theoretical physics. We just quote some recent references somehow related to
our research interests. In the paper \cite{aokakota}, the authors study formal solutions and their Borel transform of singularly perturbed
differential equations of infinite order
$$ \sum_{j \geq 0} \epsilon^{j} P_{j}(x,\epsilon \partial_{x}) \psi(x,\epsilon) = 0 $$
where $P_{j}(x,\xi) = \sum_{k \geq 0} a_{j,k}(x) \xi^{k}$ represent entire functions with appropriate growth features. For a nice introduction of the point of
view introduced by M. Sato called algebraic microlocal analysis, we refer to \cite{kast}. Other important contributions on infinite order ODEs in this
context of algebraic microlocal analysis can be singled out such as \cite{kawst1}, \cite{kawst2}.

\noindent The paper is arranged as follows.\\
In Section 2, we remind the reader the definition of Laplace transform for an order $k$ chosen among the positive real numbers and basic
formulas for the Fourier inverse transform acting on exponentially flat functions.\\
In Section 3, we display our main problem (\ref{main_ivp_u}) and describe the strategy used to solve it. In a first step, we restrain our inquiry for the
sets of solutions to time rescaled function spaces, see (\ref{time_rescaled_solution}). Then, we elect as potential candidates for solutions Laplace
transforms of order $k$ and Fourier inverse transforms of \emph{Borel maps} $w$ with exponential growth on unbounded sectors and exponential decay on
the real line. In the last step, we write down the convolution problem (\ref{main_integral_eq_w}) which is asked to be solved by the map $w$.\\
In Section 4, we analyze bounds for linear/nonlinear convolution operators of finite/infinite orders acting on different spaces of analytic functions
on sectors.\\
In Section 5, we solve the principal convolution problem (\ref{main_integral_eq_w}) within the Banach spaces described in Sections 3 and 4 by means of
a fixed point argument.\\
In Section 6, we provide a set of genuine holomorphic solutions (\ref{Laplace_Fourier_up}) to our initial equation (\ref{main_ivp_u}) by
executing backwards the lines of argument described in Section 3. Furthermore, we show that the difference of any two neighboring solutions
tends to 0, for $\epsilon$ in the vicinity of the origin, faster than a function with exponential decay of order $k$.\\
Finally, in Section 7, we prove the existence of a common asymptotic expansion of Gevrey order $1/k>1$ for the solutions mentioned above leaning
on the flatness estimates reached in Section 6, by means of a theorem by Ramis and Sibuya. 

\section{Laplace, Borel transforms of order $k$ and Fourier inverse maps}

We recall the definition of Laplace transform of order $k$ as introduced in \cite{lama1} but here the order $k$ is assumed to be a real
number less than 1 and larger than 1/2. If $z \in \mathbb{C}^{\ast}$ denotes a non vanishing complex number, we set
$z^{k} = \exp( k \log(z) )$ where $\log(z)$ stands for the principal value of the complex logarithm defined as $\log(z) = \log|z| + i \mathrm{arg}(z)$ with
$-\pi < \mathrm{arg}(z) < \pi$.
\begin{defin} Let $\frac{1}{2} < k < 1$ be a real number. Let
$S_{d,\delta} = \{ \tau \in \mathbb{C}^{\ast} : |d - \mathrm{arg}(\tau)| < \delta \}$ be some unbounded sector with bisecting direction
$d \in \mathbb{R}$ and aperture $2\delta > 0$. 

Consider a holomorphic function $w : S_{d,\delta} \rightarrow \mathbb{C}$ that withstands the bounds :
there exist $C>0$ and $K>0$ such that
$$ |w(\tau)| \leq C |\tau|^{k} \exp( K |\tau|^{k} ) $$
for all $\tau \in S_{d,\delta}$. We define the Laplace transform of $w$ of order $k$ in the direction $d$ as the integral transform
$$ \mathcal{L}_{k}^{d}(w)(T) = k \int_{L_{\gamma}} w(u) \exp( -(\frac{u}{T})^{k} ) \frac{du}{u} $$
along a half-line $L_{\gamma} = \mathbb{R}_{+}e^{\sqrt{-1}\gamma} \subset S_{d,\delta} \cup \{ 0 \}$, where $\gamma$ depends on
$T$ and is chosen in such a way that $\cos(k(\gamma - \mathrm{arg}(T))) \geq \delta_{1} > 0$, for some fixed $\delta_{1}$.
The function $\mathcal{L}^{d}_{k}(w)(T)$ is well defined, holomorphic and bounded on any sector
$$ S_{d,\theta,R^{1/k}} = \{ T \in \mathbb{C}^{\ast} : |T| < R^{1/k} \ \ , \ \ |d - \mathrm{arg}(T) | < \theta/2 \},$$
where $\frac{\pi}{k} < \theta < \frac{\pi}{k} + 2\delta$ and
$0 < R < \delta_{1}/K$.
\end{defin}

We restate the definition of some family of Banach spaces introduced in \cite{lama1}.
\begin{defin} Let $\beta, \mu \in \mathbb{R}$. We set
$E_{(\beta,\mu)}$ as the vector space of continuous functions $h : \mathbb{R} \rightarrow \mathbb{C}$ such that
$$ ||h(m)||_{(\beta,\mu)} = \sup_{m \in \mathbb{R}} (1+|m|)^{\mu} \exp( \beta |m|) |h(m)| $$
is finite. The space $E_{(\beta,\mu)}$ endowed with the norm $||.||_{(\beta,\mu)}$ becomes a Banach space.
\end{defin}

Finally, we remind the reader the definition of the inverse Fourier transform acting on the latter Banach spaces and some of its
handy formulas relative to derivation and convolution product as stated in
\cite{lama1}.

\begin{defin}
Let $f \in E_{(\beta,\mu)}$ with $\beta > 0$, $\mu > 1$. The inverse Fourier transform of $f$ is given by
$$ \mathcal{F}^{-1}(f)(x) = \frac{1}{ (2\pi)^{1/2} } \int_{-\infty}^{+\infty} f(m) \exp( ixm ) dm $$
for all $x \in \mathbb{R}$. The function $\mathcal{F}^{-1}(f)$ extends to an analytic bounded function on the strips
\begin{equation}
H_{\beta'} = \{ z \in \mathbb{C} / |\mathrm{Im}(z)| < \beta' \}. \label{strip_H_beta}
\end{equation}
for all given $0 < \beta' < \beta$.\\
a) Define the function $m \mapsto \phi(m) = imf(m)$ which belongs to the space $E_{(\beta,\mu-1)}$. Then, the next identity
$$ \partial_{z} \mathcal{F}^{-1}(f)(z) = \mathcal{F}^{-1}(\phi)(z) $$
occurs.\\
b) Take $g \in E_{(\beta,\mu)}$ and set 
$$ \psi(m) = \frac{1}{(2\pi)^{1/2}} \int_{-\infty}^{+\infty} f(m-m_{1})g(m_{1}) dm_{1} $$
as the convolution product of $f$ and $g$. Then, $\psi$ belongs to $E_{(\beta,\mu)}$ and moreover,
$$ \mathcal{F}^{-1}(f)(z) \mathcal{F}^{-1}(g)(z) = \mathcal{F}^{-1}(\psi)(z) $$
for all $z \in H_{\beta}$.
\end{defin}

\section{Outline of the main initial value problem and related auxiliary problems}

We set $k \in (\frac{1}{2},1)$ as a real number. Let $D \geq 2$ be an integer, $\alpha_{D} >0$ be a positive real number
and $c_{12},c_{f}$ be complex numbers in $\mathbb{C}^{\ast}$. For
$1 \leq l \leq D-1$, we consider complex numbers $c_{l} \in \mathbb{C}^{\ast}$ and non negative integers $d_{l},\delta_{l},\Delta_{l}$,
together with positive real numbers
$\kappa_{l}>0$ submitted to the next constraints. We assume that
\begin{equation}
1 = \delta_{1} \ \ , \ \ \delta_{l} < \delta_{l+1} \label{defin_deltal}
\end{equation}
for all $1 \leq l \leq D-2$. We also take for granted that
\begin{equation}
d_{l} > \delta_{l}(k+1) \ \ , \ \ \Delta_{l}-d_{l}+\delta_{l} \geq 0 \ \ , \ \ \label{cond_dl_deltal_Deltal}
\end{equation}
whenever $1 \leq l \leq D-1$. Let $Q(X),Q_{1}(X),Q_{2}(X),R_{l}(X) \in \mathbb{C}[X]$, $1 \leq l \leq D$, be polynomials such that
\begin{multline}
\mathrm{deg}(Q) = \mathrm{deg}(R_{D}) \geq \mathrm{deg}(R_{l}) \ \ , \ \ \mathrm{deg}(R_{D}) \geq \mathrm{deg}(Q_{1}) \ \ , \ \
\mathrm{deg}(R_{D}) \geq \mathrm{deg}(Q_{2}), \\
Q(im) \neq 0 \ \ , \ \ R_{D}(im) \neq 0 \label{constraints_degree_coeff}
\end{multline}
for all $m \in \mathbb{R}$, all $1 \leq l \leq D-1$. 

We consider a sequence of functions
$m \mapsto F_{n}(m,\epsilon)$, for $n \geq 1$ that belong to the Banach space $E_{(\beta,\mu)}$ for some $\beta > 0$ and
$\mu > \max( \mathrm{deg}(Q_{1})+1, \mathrm{deg}(Q_{2})+1)$ and that depend analytically on $\epsilon \in D(0,\epsilon_{0})$, where
$D(0,\epsilon_{0})$ denotes the open disc centered at 0 in $\mathbb{C}$ with radius $\epsilon_{0}>0$. We assume that there
exist constants $K_{0},T_{0}>0$ such that
\begin{equation}
||F_{n}(m,\epsilon)||_{(\beta,\mu)} \leq K_{0} (\frac{1}{T_{0}})^{n} \label{norm_beta_mu_F_n_a_ln}
\end{equation}
for all $n \geq 1$, for all $\epsilon \in D(0,\epsilon_{0})$. We define
$$ F(T,z,\epsilon) = \sum_{n \geq 1} \mathcal{F}^{-1}( m \mapsto F_{n}(m,\epsilon))(z) T^{n}$$
which represents a convergent series on $D(0,T_{0}/2)$ with holomorphic and bounded coefficients on
$H_{\beta'}$ for any given width $0 < \beta' < \beta$. For all $1 \leq l \leq D-1$, we set the polynomials
$A_{l}(T,\epsilon) = \sum_{n \in I_{l}} A_{l,n}(\epsilon)T^{n}$ where $I_{l}$ are finite subsets of $\mathbb{N}$ and
$A_{l,n}(\epsilon)$ represent bounded holomorphic functions on the disc $D(0,\epsilon_{0})$. We put
$$ f(t,z,\epsilon) = F(\epsilon t, z, \epsilon) \ \ , \ \
a_{l}(t,\epsilon) = A_{l}(\epsilon t, \epsilon) $$
for all $1 \leq l \leq D-1$. By construction, $f(t,z,\epsilon)$ (resp. $a_{l}(t,\epsilon)$)
defines a bounded holomorphic function on $D(0,r) \times H_{\beta'} \times D(0,\epsilon_{0})$ (resp. $D(0,r) \times D(0,\epsilon_{0})$)
for any given $0 < \beta' < \beta$ and radii $r,\epsilon_{0}>0$ with $r \epsilon_{0} \leq T_{0}/2$.

Let us introduce the next differential operator of infinite order formally defined as
\begin{equation}
\exp( \alpha_{D} \epsilon^{k} t^{k+1} \partial_{t} ) = \sum_{p \geq 0} \frac{(\alpha_{D} \epsilon^{k})^{p}}{p!} (t^{k+1}\partial_{t})^{(p)}
\end{equation}
where $(t^{k+1}\partial_{t})^{(p)}$ stands for the $p-$th iterate of the differential operator $t^{k+1}\partial_{t}$.
We consider a family of nonlinear singularly perturbed initial value problems which involves this latter operator of infinite order as leading term and
linear fractional transforms
\begin{multline}
Q(\partial_{z}) u(t,z,\epsilon) = \exp( \alpha_{D} \epsilon^{k} t^{k+1} \partial_{t} )
R_{D}(\partial_{z})u(t,z,\epsilon) \\
+ \sum_{l=1}^{D-1} \epsilon^{\Delta_{l}}c_{l}a_{l}(t,\epsilon)t^{d_{l}}R_{l}(\partial_{z})
\partial_{t}^{\delta_l}( u( \frac{t}{1 + \kappa_{l} \epsilon t},z,\epsilon ) )
+ c_{12}Q_{1}(\partial_{z})u(t,z,\epsilon)Q_{2}(\partial_{z})u(t,z,\epsilon) + c_{f}f(t,z,\epsilon) \label{main_ivp_u}
\end{multline}
for vanishing initial data $u(0,z,\epsilon) = 0$.

Within this work, we search for time rescaled solutions of (\ref{main_ivp_u}) of the form
\begin{equation}
u(t,z,\epsilon) = U(\epsilon t , z ,\epsilon) \label{time_rescaled_solution}
\end{equation}
Then, through the change of variable $T=\epsilon t$, the expression $U(T,z,\epsilon)$ is subjected to solve the next nonlinear singular
problem involving fractional transforms
\begin{multline}
Q(\partial_{z})U(T,z,\epsilon) = \exp( \alpha_{D} T^{k+1} \partial_{T} )
R_{D}(\partial_{z}) U(T,z,\epsilon) \\
+ \sum_{l=1}^{D-1} \epsilon^{\Delta_{l} - d_{l} + \delta_{l}}c_{l}A_{l}(T,\epsilon) T^{d_l} R_{l}(\partial_{z}) \partial_{T}^{\delta_l} \left(
U(\frac{T}{1 + \kappa_{l}T},z,\epsilon) \right) \\
+ c_{12}Q_{1}(\partial_{z})U(T,z,\epsilon)Q_{2}(\partial_{z})U(T,z,\epsilon) + c_{f}F(T,z,\epsilon) \label{main_ivp_U}
\end{multline}
for given initial data $U(0,z,\epsilon) = 0$. According to the assumption (\ref{cond_dl_deltal_Deltal}), there exists a real
number $d_{l,k}>0$ with
\begin{equation}
d_{l} = \delta_{l}(k+1) + d_{l,k} \label{defin_dlk}
\end{equation}
for all $1 \leq l \leq D-1$. Besides, with the help of the formula (8.7) from \cite{taya} p. 3630, we can expand the next differential operators
\begin{equation}
T^{\delta_{l}(k+1)} \partial_{T}^{\delta_l} = (T^{k+1}\partial_{T})^{\delta_l} +
\sum_{1 \leq p \leq \delta_{l}-1} A_{\delta_{l},p}T^{k(\delta_{l} -p)}(T^{k+1}\partial_{T})^{p} \label{Tahara_expand}
\end{equation}
where $A_{\delta_{l},p} \in \mathbb{R}$, for $1 \leq p \leq \delta_{l}-1$ and $1 \leq l \leq D-1$. Hence, according to (\ref{defin_dlk}) together
with (\ref{Tahara_expand}), we can write down the next equation for $U(T,z,\epsilon)$, namely
\begin{multline}
Q(\partial_{z})U(T,z,\epsilon)
= \exp( \alpha_{D}T^{k+1}\partial_{T}) R_{D}(\partial_{z})U(T,z,\epsilon)
+ \sum_{l=1}^{D-1} \epsilon^{\Delta_{l}-d_{l}+\delta_{l}} R_{l}(\partial_{z}) c_{l}A_{l}(T,\epsilon)\\
\times T^{d_{l,k}} \left( (T^{k+1}\partial_{T})^{\delta_{l}} +
\sum_{1 \leq p \leq \delta_{l}-1} A_{\delta_{l},p} T^{k(\delta_{l}-p)}(T^{k+1}\partial_{T})^{p} \right) \left(
U(\frac{T}{1 + \kappa_{l}T},z,\epsilon) \right)\\
+ c_{12}Q_{1}(\partial_{z})U(T,z,\epsilon)Q_{2}(\partial_{z})U(T,z,\epsilon) + c_{f}F(T,z,\epsilon).
\label{main_ivp_U_prep_form}
\end{multline}

We now provide the definition of a modified version of some Banach spaces introduced in the papers \cite{lama1}, \cite{lama2} that takes into account
a ramified variable $\tau^{k}$ for $k$ given as above.

\begin{defin} Let $S_{d}$ be an unbounded sector centered at 0
with bisecting direction $d \in \mathbb{R}$. Let $\nu,\beta,\mu>0$ and $\rho>0$ be positive real numbers. Let $k \in (\frac{1}{2},1)$ defined
as above. We set
$F_{(\nu,\beta,\mu,k,\rho)}^{d}$ as the vector space of continuous functions $(\tau,m) \mapsto h(\tau,m)$ on
$S_{d} \times \mathbb{R}$, which are holomorphic with respect to $\tau$ on $S_{d}$ such that\\
1) For all $m \in \mathbb{R}$, the function $\tau \mapsto h(\tau,m)$ extends analytically on some cut disc $D(0,\rho) \setminus L_{-}$ where
$L_{-}$ denotes the segment $(-\rho,0]$.\\
2) The norm
$$ ||h(\tau,m)||_{(\nu,\beta,\mu,k,\rho)} = \sup_{\tau \in S_{d} \cup D(0,\rho) \setminus L_{-}} (1+|m|)^{\mu} \frac{1 + |\tau|^{2k}}{|\tau|^{k}}
e^{\beta |m| - \nu |\tau|^{k}}|h(\tau,m)| $$
is finite.

The space
$F_{(\nu,\beta,\mu,k,\rho)}^{d}$ equipped with the norm
$||.||_{(\nu,\beta,\mu,k,\rho)}$ forms a Banach space.
\end{defin}

\begin{lemma} For $\beta,\mu$ given in (\ref{norm_beta_mu_F_n_a_ln}), there exists $\nu>0$ such that the series
$$ \psi(\tau,m,\epsilon) = \sum_{n \geq 1} F_{n}(m,\epsilon) \frac{\tau^n}{\Gamma( \frac{n}{k})} $$
define a function that belongs to the space
$F_{(\nu,\beta,\mu,k,\rho)}^{d}$ for all $\epsilon \in D(0,\epsilon_{0})$, for any
radius $\rho>0$, any sector $S_{d}$ for $d \in \mathbb{R}$.
\end{lemma}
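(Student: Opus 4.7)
\emph{Proof plan.} The plan is to reduce the weighted supremum defining the $F_{(\nu,\beta,\mu,k,\rho)}^{d}$-norm to the growth of a scalar Wiman-type series, and then to absorb that growth by choosing $\nu$ large enough. First, using the termwise bound furnished by $\|F_{n}(\cdot,\epsilon)\|_{(\beta,\mu)}\leq K_{0}/T_{0}^{n}$ and the triangle inequality, I would obtain
$$(1+|m|)^{\mu}\,e^{\beta|m|}\,|\psi(\tau,m,\epsilon)|\leq K_{0}\sum_{n\geq 1}\frac{(|\tau|/T_{0})^{n}}{\Gamma(n/k)}$$
uniformly in $m\in\mathbb{R}$ and $\epsilon\in D(0,\epsilon_{0})$. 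Since $1/k>1$, Stirling's formula forces $1/\Gamma(n/k)$ to decay faster than any geometric rate, so this series defines an entire function of $\tau$, which at once takes care of condition (1) in Definition~4: the map $\tau\mapsto\psi(\tau,m,\epsilon)$ is analytic on all of $\mathbb{C}$, hence on any cut disc $D(0,\rho)\setminus L_{-}$ whatsoever.

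The heart of the matter is to estimate the auxiliary scalar series $\Phi(x)=\sum_{n\geq 1}x^{n}/\Gamma(n/k)$ for $x\geq 0$. I would recognize $\Phi(x)=x\,E_{1/k,1/k}(x)$ with $E_{\alpha,\beta}$ the Wiman function highlighted in the introduction, and invoke the classical asymptotic $E_{\alpha,\beta}(x)\sim\alpha^{-1}x^{(1-\beta)/\alpha}e^{x^{1/\alpha}}$ as $x\to+\infty$, valid because $\alpha=1/k\in(1,2)$. With $\beta=1/k$ this produces a constant $C_{k}>0$ such that $\Phi(x)\leq C_{k}\,x^{k}e^{x^{k}}$ for every $x\geq 1$. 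In the complementary range $0\leq x\leq 1$, using $x^{n}\leq x$ gives the crude bound $\Phi(x)\leq M_{1}\,x$, where $M_{1}=\sum_{n\geq 1}T_{0}^{-n}/\Gamma(n/k)$ is plainly finite.

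The last step splits the $\tau$-supremum into the two regimes $|\tau|\leq 1$ and $|\tau|\geq 1$. On the first, $(1+|\tau|^{2k})/|\tau|^{k}\leq 2|\tau|^{-k}$, so the weighted quantity is controlled by $2K_{0}M_{1}|\tau|^{1-k}e^{-\nu|\tau|^{k}}$, which stays bounded since $1-k>0$. On the second, $(1+|\tau|^{2k})/|\tau|^{k}\leq 2|\tau|^{k}$, so after inserting $x=|\tau|/T_{0}$ in the Wiman bound the weighted quantity is dominated by $2C_{k}K_{0}T_{0}^{-k}|\tau|^{2k}\exp\bigl((T_{0}^{-k}-\nu)|\tau|^{k}\bigr)$; any choice $\nu>T_{0}^{-k}$ renders this uniformly bounded. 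Such a $\nu$ depends only on $T_{0}$ and $k$, independently of $\epsilon$, $\rho$, and $d$, so it yields $\|\psi\|_{(\nu,\beta,\mu,k,\rho)}<\infty$ as required.

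The sole delicate step is the Wiman asymptotic in the middle paragraph: it is crucial that the exponent appearing in $\Phi(x)$ be exactly $x^{k}$ and not some strictly larger power, since otherwise no finite $\nu$ could absorb the growth and the lemma would collapse. Once that asymptotic is granted, the remainder reduces to the routine two-regime split outlined above.
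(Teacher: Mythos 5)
Your proof is correct, but the route differs from the paper's in an instructive way. The paper's proof of Lemma 1 works termwise: for each fixed $n$ it optimizes the quantity $\sup_{\tau}\frac{1+|\tau|^{2k}}{|\tau|^{k}}e^{-\nu|\tau|^{k}}\frac{|\tau|^{n}}{\Gamma(n/k)}$ using the elementary identity $\sup_{x\geq 0}x^{m_{1}}e^{-m_{2}x}=(m_{1}/m_{2})^{m_{1}}e^{-m_{1}}$ together with Stirling's formula for $\Gamma(n/k)$, obtaining a bound of the geometric form $A_{1}(A_{2}/\nu^{1/k})^{n}$; summing that over $n$ then requires $\nu^{1/k}>A_{2}/T_{0}$. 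You instead sum first and recognize $\sum_{n\geq 1}x^{n}/\Gamma(n/k)=x\,E_{1/k,1/k}(x)$ as a Wiman function, then invoke its classical asymptotic $E_{\alpha,\beta}(x)\sim\alpha^{-1}x^{(1-\beta)/\alpha}e^{x^{1/\alpha}}$ (legitimate since $\alpha=1/k\in(0,2)$) to obtain the global bound $\lesssim x^{k}e^{x^{k}}$, from which the two-regime split gives $\nu>T_{0}^{-k}$. Both arguments are sound and both hinge on the same phenomenon — that $\sum\tau^{n}/\Gamma(n/k)$ grows like $e^{(\mathrm{const})|\tau|^{k}}$ — but your version makes the threshold on $\nu$ explicit and sharp, whereas the paper's yields a non-explicit constant $A_{2}$; on the other hand, the paper's version is more elementary and self-contained, needing only Stirling's formula, while yours imports the Wiman asymptotic (which, amusingly, the paper does use, but only later in Proposition 1). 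One small bookkeeping slip: after substituting $x=|\tau|/T_{0}$, your split at $|\tau|=1$ does not quite match the regimes $x\leq 1$ and $x\geq 1$ unless $T_{0}\leq 1$; splitting at $|\tau|=T_{0}$ (or applying the crude and Wiman bounds with respect to $y=|\tau|/T_{0}$ directly) removes this wrinkle. This does not affect the validity of the conclusion.
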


\begin{proof} By Definition of the norm $||.||_{(\nu,\beta,\mu,k,\rho)}$, we get the next upper bounds
\begin{equation}
 ||\psi(\tau,m,\epsilon)||_{(\nu,\beta,\mu,k,\rho)} \leq \sum_{n \geq 1}
||F_{n}(m,\epsilon)||_{(\beta,\mu)} (\sup_{\tau \in (D(0,\rho)\setminus L_{-}) \cup S_{d}}
\frac{1 + |\tau|^{2k}}{|\tau|^{k}} \exp(-\nu |\tau|^{k})
\frac{|\tau|^n}{\Gamma(\frac{n}{k})}) \label{maj_norm_psi_1}
\end{equation}
Due to the classical estimates
$$ \sup_{x \geq 0} x^{m_1} \exp( -m_{2} x ) = (\frac{m_{1}}{m_{2}})^{m_1} e^{-m_{1}} $$
for any real numbers $m_{1} \geq 0$, $m_{2}>0$, together with the Stirling formula (see \cite{ba2}, Appendix B.3)
$$ \Gamma(n/k) \sim (2\pi)^{1/2}(n/k)^{\frac{n}{k}-\frac{1}{2}}e^{-n/k}$$
as $n$ tends to $+\infty$, we get two constants
$A_{1}>0$ depending on $k,\nu$ and $A_{2}>0$ depending on $k$ such that
\begin{multline}
\sup_{\tau \in (D(0,\rho) \setminus L_{-}) \cup S_{d}}
\frac{1 + |\tau|^{2k}}{|\tau|^{k}} \exp(-\nu |\tau|^{k})
\frac{|\tau|^n}{\Gamma(\frac{n}{k})} \leq \sup_{x \geq 0} (1+x^{2})x^{\frac{n}{k}-1}
\frac{e^{-\nu x}}{\Gamma(\frac{n}{k})} \\
\leq \left( (\frac{ \frac{n}{k} - 1}{\nu})^{\frac{n}{k}-1} e^{-( \frac{n}{k} - 1)} +
(\frac{ \frac{n}{k} + 1}{\nu} )^{\frac{n}{k} + 1} e^{-(\frac{n}{k} + 1)} \right)/ \Gamma(n/k) 
\leq A_{1}(\frac{A_{2}}{\nu^{1/k}})^{n} \label{sup_Stirling}
\end{multline}
for all $n \geq 1$. Therefore, if $\nu^{1/k} > A_{2}/T_{0}$ then we obtain the bounds
\begin{equation}
||\psi(\tau,m,\epsilon)||_{(\nu,\beta,\mu,k,\rho)} \leq A_{1}K_{0} \sum_{n \geq 1} (\frac{A_{2}}{T_{0}\nu^{1/k}})^{n}
= \frac{A_{1}K_{0}A_{2}}{T_{0} \nu^{1/k}} \frac{1}{1 - \frac{A_{2}}{T_{0}\nu^{1/k}}}
\end{equation}
for all $\epsilon \in D(0,\epsilon_{0})$.
\end{proof}
By construction, according to the very definition of the Gamma function, the function
$F(T,z,\epsilon)$ can be represented as a Laplace transform of order $k$ in direction $d$ and Fourier inverse transform
\begin{equation}
F(T,z,\epsilon) = \frac{k}{(2\pi)^{1/2}} \int_{-\infty}^{+\infty} \int_{L_{\gamma}} \psi(u,m,\epsilon)
\exp( -(\frac{u}{T})^{k} ) e^{izm} \frac{du}{u} dm 
\end{equation}
where the integration path $L_{\gamma} = \mathbb{R}_{+}e^{\sqrt{-1}\gamma}$ stands for a halfline with direction $\gamma \in \mathbb{R}$
which belongs to the set $S_{d} \cup \{ 0 \}$, whenever $T$ belongs to a sector $S_{d,\theta,\varrho}$ with
bisecting direction $d$, aperture $\frac{\pi}{k} < \theta < \frac{\pi}{k} + \mathrm{Ap}(S_{d})$ and radius $\varrho$
with $\mathrm{Ap}(S_{d})$ the aperture of $S_{d}$ for some $\varrho>0$ and $z$ appertains to a strip $H_{\beta'}$ for any
$0 < \beta' < \beta$ together with $\epsilon \in D(0,\epsilon_{0})$.

In the next step, we seek for solutions $U(T,z,\epsilon)$ of (\ref{main_ivp_U_prep_form}) on the same domains as above that can be
expressed similarly to $F(T,z,\epsilon)$ as integral
representations through Laplace transforms of order $k$ and Fourier inverse transform
\begin{equation}
U_{\gamma}(T,z,\epsilon) = \frac{k}{(2\pi)^{1/2}} \int_{-\infty}^{+\infty} \int_{L_{\gamma}} w(u,m,\epsilon)
\exp( -(\frac{u}{T})^{k} ) e^{izm} \frac{du}{u} dm
\end{equation}
Our goal is the statement of a related problem fulfilled by the expression $w(\tau,m,\epsilon)$ that is forecast
to be solved in the next section among the Banach spaces introduced above in Definition 4. Overall this section, let us assume that
the function $w(\tau,m,\epsilon)$ belongs to the Banach space
$F_{(\nu,\beta,\mu,k,\rho)}^{d}$.

We first display some formulas related to the action of the differential operators of irregular type and multiplication by
monomials. A similar statement has been given in Section 3 of \cite{lama1} for formal series expansions.

\begin{lemma} 1) The action of the differential operator $T^{k+1}\partial_{T}$ on $U_{\gamma}$ is given by
\begin{equation}
T^{k+1}\partial_{T}U_{\gamma}(T,z,\epsilon) =
\frac{k}{(2\pi)^{1/2}} \int_{-\infty}^{+\infty} \int_{L_{\gamma}} ku^{k}w(u,m,\epsilon)
\exp( - (\frac{u}{T})^{k} ) e^{izm} \frac{du}{u} dm. \label{TkpartialTUgamma}
\end{equation}
2) Let $m' > 0$ be a real number. The action of the multiplication by $T^{m'}$ on $U_{\gamma}$ is described through
\begin{multline}
T^{m'}U_{\gamma}(T,z,\epsilon) =
\frac{k}{(2\pi)^{1/2}} \int_{-\infty}^{+\infty} \int_{L_{\gamma}} \left(
\frac{u^{k}}{\Gamma(\frac{m'}{k})} \int_{0}^{u^{k}} (u^{k} - s)^{\frac{m'}{k}-1} w(s^{1/k},m,\epsilon) \frac{ds}{s} \right)\\
\times \exp( - (\frac{u}{T})^{k} ) e^{izm} \frac{du}{u} dm \label{TmUgamma}
\end{multline}
3) The action of the differential operators $Q(\partial_{z})$ and multiplication with the resulting functions $Q(\partial_{z})U_{\gamma}$
maps $U_{\gamma}$ into a Laplace and Fourier transform,
\begin{multline}
Q_{1}(\partial_{z})U_{\gamma}(T,z,\epsilon)Q_{2}(\partial_{z})U_{\gamma}(T,z,\epsilon) =
\frac{k}{(2\pi)^{1/2}} \int_{-\infty}^{+\infty} \int_{L_{\gamma}}
\left( \frac{1}{(2\pi)^{1/2}} \int_{-\infty}^{+\infty} u^{k} \right. \\
\left. \times \int_{0}^{u^k} Q_{1}(i(m-m_{1}))
w((u^{k}-s)^{1/k},m-m_{1},\epsilon) Q_{2}(im_{1})w(s^{1/k},m_{1},\epsilon) \frac{1}{(u^{k}-s)s} ds dm_{1} \right)\\
\times \exp( - (\frac{u}{T})^{k} ) e^{izm} \frac{du}{u} dm \label{QUgammaQUgamma}
\end{multline}
\end{lemma}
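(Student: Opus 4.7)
The plan is to prove the three identities by direct computation, with all interchanges of derivatives, integrations and Fubini legitimated by the exponential--polynomial weights in the norm $\|\cdot\|_{(\nu,\beta,\mu,k,\rho)}$, the factor $(1+|m|)^{-\mu} e^{-\beta|m|}$ inherited from $w \in F^{d}_{(\nu,\beta,\mu,k,\rho)}$, and the constraint $\cos(k(\gamma - \mathrm{arg}(T))) \geq \delta_{1} > 0$ built into the choice of $\gamma$, which ensures that $|\exp(-(u/T)^{k})|$ decays like $\exp(-\delta_{1}|u/T|^{k})$ along $L_{\gamma}$. The whole argument is essentially the translation, at the level of the Borel map $w$, of three classical facts from the Borel--Laplace theory of order $k$.

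For part 1), I differentiate under the double integral by means of the chain-rule identity
$$ T^{k+1}\partial_{T} \exp(-(u/T)^{k}) = k u^{k} \exp(-(u/T)^{k}), $$
which inserts the factor $k u^{k}$ into the Borel integrand and yields (\ref{TkpartialTUgamma}). Part 2) will proceed in two steps. First I establish the scalar identity
$$ T^{m'} = \frac{k}{\Gamma(m'/k)} \int_{L_{\gamma}} u^{m'} \exp(-(u/T)^{k}) \frac{du}{u} $$
via the substitution $\sigma = (u/T)^{k}$, which reduces the right-hand side to a Gamma integral. Hence, for each fixed $m, \epsilon$, the function $T^{m'} U_{\gamma}(T,z,\epsilon)$ is a product of two Laplace transforms of order $k$ in $T$. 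I would then invoke the $k$-convolution formula
$$ \mathcal{L}_{k}^{d}(f)(T)\, \mathcal{L}_{k}^{d}(g)(T) = \mathcal{L}_{k}^{d}\left( u \mapsto u^{k} \int_{0}^{u^{k}} f((u^{k}-s)^{1/k}) g(s^{1/k}) \frac{ds}{(u^{k}-s)\, s} \right)(T) $$
with $f(u) = u^{m'}/\Gamma(m'/k)$ and $g(u) = w(u,m,\epsilon)$, and observe that $(u^{k} - s)^{m'/k}/(u^{k} - s) = (u^{k} - s)^{m'/k - 1}$, which exactly reproduces the inner integrand of (\ref{TmUgamma}).

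Part 3) stacks a Fourier convolution on top of a $k$-convolution. By Definition 3 a), the operators $Q_{j}(\partial_{z})$ insert the symbols $Q_{j}(im)$ into the Fourier integrand; by Definition 3 b), the product of the two inverse Fourier transforms equals the inverse Fourier transform of the Fourier convolution in $m$. For each fixed pair $(m,m_{1})$, the remaining inner product of two Laplace transforms of order $k$ is then rewritten through the same $k$-convolution formula applied to $f(u) = Q_{1}(i(m - m_{1})) w(u, m - m_{1}, \epsilon)$ and $g(u) = Q_{2}(im_{1}) w(u, m_{1}, \epsilon)$, which produces precisely the triple integral in (\ref{QUgammaQUgamma}). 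The central technical point is the $k$-convolution formula itself; I would derive it by expanding $\mathcal{L}_{k}^{d}(f)(T)\, \mathcal{L}_{k}^{d}(g)(T)$ as a double integral over $L_{\gamma} \times L_{\gamma}$, performing the successive changes of variables $s_{1} = u_{1}^{k}$, $s_{2} = u_{2}^{k}$, then $w = s_{1} + s_{2}$, $s = s_{2}$, and finally $w = u^{k}$, with a contour deformation back onto $L_{\gamma}$ inside $S_{d}$ justified by Cauchy's theorem and the exponential decay of $f, g$ inherited from the ambient Banach spaces.
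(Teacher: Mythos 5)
Your argument is correct, and the key technical content coincides with the paper's. Part 1) is identical: both differentiate under the integral using $T^{k+1}\partial_{T}e^{-(u/T)^{k}} = ku^{k}e^{-(u/T)^{k}}$. Parts 2) and 3) differ mainly in packaging. The paper treats each case by a direct Fubini computation that starts from the claimed integral representation and successively deforms paths ($u^{k}=v$, then $v-s=v'$, then back) until it factors; in part 3) it also slides in the Fourier translation $m-m_{1}=m'$. You instead isolate the $k$-convolution identity
\begin{equation*}
\mathcal{L}_{k}^{d}(f)(T)\,\mathcal{L}_{k}^{d}(g)(T)
= \mathcal{L}_{k}^{d}\!\left(u \mapsto u^{k}\!\int_{0}^{u^{k}} f((u^{k}-s)^{1/k})\,g(s^{1/k})\,\frac{ds}{(u^{k}-s)\,s}\right)(T)
\end{equation*}
as a reusable lemma, prove it once by the same sequence of substitutions ($s_{j}=u_{j}^{k}$, $v=s_{1}+s_{2}$, $v=u^{k}$, plus a Cauchy deformation back onto $L_{\gamma}$), and then deduce part 2) as the special case $f(u)=u^{m'}/\Gamma(m'/k)$ (after checking the scalar identity $T^{m'} = \tfrac{k}{\Gamma(m'/k)}\int_{L_{\gamma}} u^{m'}e^{-(u/T)^{k}}\tfrac{du}{u}$, which is correct) and part 3) by superposing the Fourier convolution from Definition 3 on top of the $k$-convolution. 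This modular route is cleaner and makes the logical dependence on a single convolution fact explicit, at the cost of a little extra set-up; the paper's version carries out each Fubini computation inline, which keeps the constants and path choices concrete at every step. Both proofs rest on the same justifications (decay from the $(\nu,\beta,\mu,k,\rho)$-norm and from $\cos(k(\gamma-\arg T))\geq\delta_{1}$, and Cauchy's theorem for the contour shifts), and both are complete.
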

\begin{proof} Here we present direct analytic proofs which avoids the use of summability arguments through the Watson's lemma.
The first point 1) is obtained by a mere derivation under the $\int$ symbol. We turn to the second point 2). By application of the
Fubini theorem we get that
\begin{multline*}
A = \int_{L_{\gamma}} u^{k-1} \int_{0}^{u^k} (u^{k} - s)^{\frac{m'}{k}-1} w(s^{1/k},m,\epsilon) \frac{ds}{s}
\exp( - (\frac{u}{T})^{k} ) du \\
= \int_{L_{\gamma'}} \left( \int_{L_{s^{1/k},\gamma}} u^{k-1}(u^{k}-s)^{\frac{m'}{k}-1} \exp( - (\frac{u}{T})^{k} ) du \right)
w(s^{1/k},m,\epsilon) \frac{ds}{s}
\end{multline*}
where $\gamma' = k \gamma$ and $L_{s^{1/k},\gamma} = [|s|^{1/k},+\infty) e^{\sqrt{-1}\gamma}$. On the other hand, by successive path
deformations
$u^{k}=v$, and $v-s=v'$ we get that
$$
\int_{L_{s^{1/k},\gamma}} u^{k-1}(u^{k}-s)^{\frac{m'}{k}-1} \exp( - (\frac{u}{T})^{k} ) du =
\int_{L_{s,\gamma'}} (v-s)^{\frac{m'}{k}-1} \exp( -\frac{v}{T^{k}} ) \frac{1}{k} dv
$$
where $L_{s,\gamma'} = [|s|,+\infty) e^{\sqrt{-1}\gamma'}$ and
$$
\int_{L_{s,\gamma'}} (v-s)^{\frac{m'}{k}-1} \exp( -\frac{v}{T^{k}} ) \frac{1}{k} dv = 
\int_{L_{\gamma'}} (v')^{\frac{m'}{k}-1} \exp( -\frac{v'}{T^{k}} ) \frac{1}{k} dv' \exp( -\frac{s}{T^{k}} )
$$
By the very definition of the Gamma function and a path deformation yields
$$ \int_{L_{\gamma'}} (v')^{\frac{m'}{k}-1} \exp( -\frac{v'}{T^{k}} ) dv' = \Gamma(\frac{m'}{k})T^{m'} $$
As a result, according to the path deformation $s=u^{k}$, we finally get
$$ A = \int_{L_{\gamma'}} \frac{\Gamma(\frac{m'}{k})}{k} T^{m'} w(s^{1/k},m,\epsilon) \exp( -\frac{s}{T^{k}} ) \frac{ds}{s}
= \int_{L_{\gamma}} w(u,m,\epsilon) \exp( -(\frac{u}{T})^{k} ) \frac{du}{u} \Gamma(\frac{m'}{k}) T^{m'} $$
which implies the identity (\ref{TmUgamma}).

We aim our attention to the point 3). Again the Fubini theorem yields
\begin{multline*}
A = \int_{-\infty}^{+\infty} \int_{-\infty}^{+\infty} \int_{L_{\gamma}} \int_{0}^{u^k}
u^{k-1} Q_{1}(i(m-m_{1}))
w((u^{k} - s)^{1/k},m-m_{1},\epsilon)\\
\times Q_{2}(im_{1})w(s^{1/k},m_{1},\epsilon) \frac{1}{(u^{k}-s)s}
\exp( -(\frac{u}{T})^{k} ) e^{izm} ds du dm_{1} dm =\\
\int_{-\infty}^{+\infty} \int_{-\infty}^{+\infty} \int_{L_{\gamma'}} \int_{L_{s^{1/k},\gamma}}
u^{k-1} Q_{1}(i(m-m_{1}))
w((u^{k} - s)^{1/k},m-m_{1},\epsilon)\\
\times Q_{2}(im_{1})w(s^{1/k},m_{1},\epsilon) \frac{1}{(u^{k}-s)s}
\exp( -(\frac{u}{T})^{k} ) e^{izm} du ds dm dm_{1}
\end{multline*}
where $\gamma' = k \gamma$ and $L_{s^{1/k},\gamma} = [|s|^{1/k},+\infty)e^{\sqrt{-1}\gamma}$. By the path deformation $v=u^{k}$,
\begin{multline*}
B = \int_{L_{s^{1/k},\gamma}} u^{k-1}w((u^{k} - s)^{1/k},m-m_{1},\epsilon) \frac{1}{(u^{k}-s)} \exp( -(\frac{u}{T})^{k} ) du
\\
= \int_{L_{s,\gamma'}} \frac{1}{k} w((v-s)^{1/k},m-m_{1},\epsilon) \frac{1}{v-s} \exp( -\frac{v}{T^{k}} ) dv
\end{multline*}
in a row with the path deformation $v-s = v'$,
$$
B = \int_{L_{\gamma'}} \frac{1}{k} w((v')^{1/k},m-m_{1},\epsilon) \frac{1}{v'} \exp( -\frac{v'}{T^{k}} ) dv'
\exp( -\frac{s}{T^{k}} ). $$
Therefore, we obtain
\begin{multline*}
A = \int_{-\infty}^{+\infty} \int_{-\infty}^{+\infty} \int_{L_{\gamma'}} \int_{L_{\gamma'}}
Q_{1}(i(m-m_{1}))\frac{1}{k} w((v')^{1/k},m-m_{1},\epsilon) \\
\times \frac{1}{v'} \exp( -\frac{v'}{T^{k}} )
\exp( -\frac{s}{T^{k}} ) Q_{2}(im_{1})w(s^{1/k},m_{1},\epsilon) \frac{1}{s} e^{izm} dv' ds dm dm_{1}
\end{multline*}
Besides, by the change of variable $m-m_{1}=m'$,
\begin{multline*}
\int_{-\infty}^{+\infty} Q_{1}(i(m-m_{1}))w((v')^{1/k},m-m_{1},\epsilon) e^{izm} dm\\
= \int_{-\infty}^{+\infty} Q_{1}(im') w((v')^{1/k},m',\epsilon)e^{izm'} dm' e^{izm_{1}}
\end{multline*}
As a result,
\begin{multline*}
A = \frac{1}{k} \int_{-\infty}^{+\infty} \int_{L_{\gamma'}}
Q_{1}(im')w((v')^{1/k},m',\epsilon) \frac{1}{v'} \exp( -\frac{v'}{T^{k}} ) e^{izm'} dv' dm'\\
\times
\int_{-\infty}^{+\infty} \int_{L_{\gamma'}}
Q_{2}(im_{1})w(s^{1/k},m_{1},\epsilon) \frac{1}{s} \exp( -\frac{s}{T^{k}} ) e^{izm_{1}} ds dm_{1}
\end{multline*}
and according to the paths deformations $s=u^{k}$ and $v'=u^{k}$, we get at last
\begin{multline*}
A = k \int_{-\infty}^{+\infty} \int_{L_{\gamma}} Q_{1}(im')w(u,m',\epsilon) \exp( - (\frac{u}{T})^{k} )
e^{izm'} \frac{du}{u} dm'\\
\times \int_{-\infty}^{+\infty} \int_{L_{\gamma}} Q_{2}(im_{1}) w(u,m_{1},\epsilon)
\exp( - (\frac{u}{T})^{k} ) e^{izm_{1}} \frac{du}{u} dm_{1}
\end{multline*}
from which the identity (\ref{QUgammaQUgamma}) follows.
\end{proof}
At the next level, we describe the action of the Moebius transform $T \mapsto \frac{T}{1 + \kappa_{l}T}$ on $U_{\gamma}$.
It needs some preliminaries.

We depart as in the work of B. Faber and M. Van der Put \cite{favan} which describes the translation $x \mapsto x + \kappa_{l}$ as
a differential operator of infinite order through the Taylor expansion. Namely, for any holomorphic function
$f : U \mapsto \mathbb{C}$ defined on an open convex set $U \subset \mathbb{C}$ containing $x$ and $x + \kappa_{l}$, the next
Taylor formula holds
\begin{equation}
f(x + \kappa_{l}) = \sum_{p \geq 0} \frac{f^{(p)}(x)}{p!} \kappa_{l}^{p} \label{Taylor_at_x}
\end{equation}
where $f^{(p)}(x)$ denotes the derivative of order $p \geq 0$ of $f$ where by convention $f^{(0)}(x)=f(x)$. If one performs the change of
variable $x=1/T$ through the change of function $f(x) = U(1/x)$, one obtains a corresponding formula for $U(T)$,
\begin{equation}
U(\frac{T}{1 + \kappa_{l}T}) = \sum_{p \geq 0} \frac{(-1)^{p}(T^{2}\partial_{T})^{(p)}}{p!} \kappa_{l}^{p} U(T) \label{fraction_inf_order_irregular}
\end{equation}
where $(T^{2}\partial_{T})^{(p)}$ represents the $p-$th iterate of the irregular operator $T^{2}\partial_{T}$.

According to our hypothesis $k \in (1/2,1)$, we can rely on Lemma 2 1)2) for the next expansions
\begin{multline}
T^{2}\partial_{T}U_{\gamma}(T,z,\epsilon) =  T^{1-k}T^{k+1}\partial_{T}U_{\gamma}(T,z,\epsilon) \\
=
\frac{k}{(2\pi)^{1/2}}\int_{-\infty}^{+\infty} \int_{L_{\gamma}} \left( \frac{u^k}{\Gamma(\frac{1}{k}-1)}
\int_{0}^{u^k}(u^{k} - s)^{\frac{1}{k}-2} k w(s^{1/k},m,\epsilon) ds \right)
\exp( - (\frac{u}{T})^{k} ) e^{izm} \frac{du}{u} dm
\end{multline}
As a result, if one denotes $\mathcal{C}_{k}$ the operator defined as
\begin{equation}
\mathcal{C}_{k}( w(\tau,m,\epsilon) ) :=  \frac{\tau^k}{\Gamma(\frac{1}{k}-1)}
\int_{0}^{\tau^k}(\tau^{k} - s)^{\frac{1}{k}-2} k w(s^{1/k},m,\epsilon) ds \label{defin_mathcalCk}
\end{equation}
then the expression $U_{\gamma}(\frac{T}{1 + \kappa_{l}T},z,\epsilon)$ can be written as Laplace transform of order $k$ in direction
$d$ and Fourier inverse transform
\begin{equation}
U_{\gamma}(\frac{T}{1 + \kappa_{l}T},z,\epsilon) = \frac{k}{(2\pi)^{1/2}} \int_{-\infty}^{+\infty} \int_{L_{\gamma}}
(\exp( -\kappa_{l} \mathcal{C}_{k} )w)(u,m,\epsilon) \exp(- (\frac{u}{T})^{k} ) e^{izm} \frac{du}{u} dm \label{Moebius_Ugamma_int_rep}
\end{equation}
where the integrant is formally presented as a series of operators
\begin{equation}
(\exp( -\kappa_{l} \mathcal{C}_{k} )w)(\tau,m,\epsilon) := \sum_{p \geq 0} \frac{(-1)^{p} \kappa_{l}^{p}}{p!}
\mathcal{C}_{k}^{(p)}w(\tau,m,\epsilon) 
\end{equation}
and $\mathcal{C}_{k}^{(p)}$ stands for the $k-$th order iterate of the operator $\mathcal{C}_{k}$ described above.

By virtue of the identities (\ref{TkpartialTUgamma}), (\ref{TmUgamma}) and (\ref{QUgammaQUgamma}) presented in Lemma 2 and according
to the integral representation for the Moebius map acting on $U_{\gamma}$ as described above in
(\ref{Moebius_Ugamma_int_rep}), we are now in position to state
the main equation that shall fulfill the expression $w(\tau,m,\epsilon)$ provided that $U_{\gamma}(T,z,\epsilon)$ solves the
equation in prepared form (\ref{main_ivp_U_prep_form}), namely
\begin{multline}
Q(im) w(\tau,m,\epsilon) = \exp(\alpha_{D}k \tau^{k})R_{D}(im)w(\tau,z,\epsilon) \\
+ \sum_{l=1}^{D-1} \epsilon^{\Delta_{l} - d_{l} + \delta_{l}}R_{l}(im)c_{l} \sum_{n \in I_{l}} A_{l,n}(\epsilon)\\
\times
\left( \frac{\tau^{k}}{\Gamma( \frac{n + d_{l,k}}{k} )}
\int_{0}^{\tau^{k}} (\tau^{k} - s)^{\frac{n+d_{l,k}}{k}-1} k^{\delta_{l}} s^{\delta_{l}}
\left( \exp(-\kappa_{l}\mathcal{C}_{k})w \right)(s^{1/k},m,\epsilon) \frac{ds}{s} \right. \\
\left. + \sum_{1 \leq p \leq \delta_{l}-1} A_{\delta_{l},p} \frac{\tau^{k}}{\Gamma( \frac{n + d_{l,k}}{k} + \delta_{l} -p)}
\int_{0}^{\tau^{k}} (\tau^{k}-s)^{\frac{n+d_{l,k}}{k} + \delta_{l}-p-1} k^{p}s^{p}
(\exp( -\kappa_{l} \mathcal{C}_{k})w)(s^{1/k},m,\epsilon) \frac{ds}{s} \right)\\
+ 
c_{12}\frac{\tau^k}{(2\pi)^{1/2}} \int_{0}^{\tau^{k}} \int_{-\infty}^{+\infty}
Q_{1}(i(m-m_{1}))w((\tau^{k}-s)^{1/k},m-m_{1},\epsilon)\\
 \times  Q_{2}(im_{1})
w(s^{1/k},m_{1},\epsilon) \frac{1}{(\tau^{k}-s)s} dsdm_{1}
+ c_{f}\psi(\tau,m,\epsilon) \label{main_integral_eq_w}
\end{multline}

\section{Action of convolution operators on analytic and continuous function spaces}

The principal goal of this section is to present bounds for convolution maps acting on function spaces that are analytic on sectors in
$\mathbb{C}$ and continuous on $\mathbb{R}$. As in Definition 4, $S_{d}$ denotes an unbounded sector centered at 0 with bisecting direction
$d$ in $\mathbb{R}$ and $D(0,\rho) \setminus L_{-}$ stands for a cut disc centered at 0 where $L_{-} = (-\rho,0]$.

\begin{prop} Let $k \in (\frac{1}{2},1)$ be a real number. We set $\gamma_{2},\gamma_{3}$ as real numbers submitted to the next
assumption
\begin{equation}
\gamma_{2} + \gamma_{3}+2 \geq 0 \ \ , \ \ \gamma_{2}>-1 \ \ , \ \ \gamma_{3} \geq 0 \ \ , \ \ k(\gamma_{2}+\gamma_{3}+2) \in
\mathbb{N} \label{constraints_gamma_k} 
\end{equation}
Let $(\tau,m) \mapsto f(\tau,m)$ be a continuous function on $S_{d} \times \mathbb{R}$, holomorphic w.r.t $\tau$ on
$S_{d}$ for which there exists a constant $C_{1}>0$, a positive integer $N \in \mathbb{N}^{\ast}$, and real numbers $\sigma>0$, $\mu > 1$,
$\beta>0$ with
\begin{equation}
|f(\tau,m)| \leq C_{1}|\tau|^{kN} \exp( \sigma |\tau|^{k} ) (1 + |m|)^{-\mu} \exp( -\beta |m| ) \label{bds_f_taukN_exp_sigma_tauk}
\end{equation}
for all $\tau \in S_{d}$, all $m \in \mathbb{R}$. Assume moreover that for all $m \in \mathbb{R}$, the map $\tau \mapsto f(\tau,m)$
extends analytically on the cut disc $D(0,\rho)\setminus L_{-}$ and for which one can choose
a constant $C'_{1}>0$ such that
\begin{equation}
 |f(\tau,m)| \leq C_{1}'|\tau|^{k}(1 + |m|)^{-\mu}e^{-\beta |m|} \label{bds_f_tauk_near_zero}
\end{equation}
whenever $\tau \in D(0,\rho) \setminus L_{-}$ and $m \in \mathbb{R}$. 

We set
\begin{equation}
\mathcal{C}_{k,\gamma_{2},\gamma_{3}}(f)(\tau,m) = \tau^{k} \int_{0}^{\tau^{k}} (\tau^{k} -s)^{\gamma_{2}} s^{\gamma_{3}} f(s^{1/k},m) ds. 
\end{equation}
Then,\\
1) The map $(\tau,m) \mapsto \mathcal{C}_{k,\gamma_{2},\gamma_{3}}(f)(\tau,m)$ is a continuous function on
$S_{d} \times \mathbb{R}$, holomorphic w.r.t $\tau$ on $S_{d}$ for which one can sort a constant $K_{1}>0$ (depending on $\gamma_{2}$,$\sigma$)
such that
\begin{equation}
|\mathcal{C}_{k,\gamma_{2},\gamma_{3}}(f)(\tau,m)| \leq C_{1}K_{1}
|\tau|^{k(N+1)} |\tau|^{k \gamma_{3}} \exp( \sigma |\tau|^{k} ) (1 + |m|)^{-\mu} e^{-\beta |m|} \label{Ckgamma_f_bds_prop}
\end{equation}
for all $\tau \in S_{d}$, all $m \in \mathbb{R}$.\\
2) For all $m \in \mathbb{R}$, the function $\tau \mapsto \mathcal{C}_{k,\gamma_{2},\gamma_{3}}(f)(\tau,m)$ extends analytically on
$D(0,\rho) \setminus L_{-}$. Furthermore, the inequality
\begin{equation}
|\mathcal{C}_{k,\gamma_{2},\gamma_{3}}(f)(\tau,m)| \leq C_{1}'\frac{\Gamma(\gamma_{2}+1)\Gamma(\gamma_{3}+2)}{\Gamma(\gamma_{2}+\gamma_{3}+3)}
\rho^{k(\gamma_{2}+\gamma_{3}+2)}|\tau|^{k} (1 + |m|)^{-\mu}e^{-\beta |m|} \label{Ckgamma_f_bds_origin_prop}
\end{equation}
holds for all $\tau \in D(0,\rho) \setminus L_{-}$, all $m \in \mathbb{R}$.
\end{prop}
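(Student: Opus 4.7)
The plan is to begin by normalizing the integral defining $\mathcal{C}_{k,\gamma_2,\gamma_3}(f)$ via the substitution $s = \tau^k h$, $h \in [0,1]$, which transforms it into
\[
\mathcal{C}_{k,\gamma_2,\gamma_3}(f)(\tau,m) = \tau^{k(\gamma_2+\gamma_3+2)} \int_0^1 (1-h)^{\gamma_2} h^{\gamma_3} f(\tau h^{1/k}, m)\, dh.
\]
Because $k(\gamma_2+\gamma_3+2) \in \mathbb{N}$ by the hypothesis (\ref{constraints_gamma_k}), the prefactor is a single-valued integer power of $\tau$; and because $h^{1/k} \in (0,1]$, the point $\tau h^{1/k}$ remains in $D(0,\rho)\setminus L_-$ (respectively in $S_d$) whenever $\tau$ does. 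This integral representation immediately furnishes the analytic continuation to the cut disc claimed in part 2) and serves as the working form for every estimate below.

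For the sectorial bound (\ref{Ckgamma_f_bds_prop}), I would substitute (\ref{bds_f_taukN_exp_sigma_tauk}) into the normalized integral, obtaining
\[
|\mathcal{C}_{k,\gamma_2,\gamma_3}(f)(\tau,m)| \leq C_1 |\tau|^{k(N+\gamma_2+\gamma_3+2)} (1+|m|)^{-\mu} e^{-\beta|m|}\, J(|\tau|^k),
\]
where $J(r) := \int_0^1 (1-h)^{\gamma_2} h^{N+\gamma_3} e^{\sigma r h}\, dh$. The central point is to extract from $J(r)$ exactly the gain $r^{-(\gamma_2+1)}$ that converts the crude exponent $k(N+\gamma_2+\gamma_3+2)$ into the sharper $k(N+1+\gamma_3)$. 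This is accomplished by the Laplace-type change of variable $u = \sigma r(1-h)$, which rewrites
\[
J(r) = \frac{e^{\sigma r}}{(\sigma r)^{\gamma_2+1}} \int_0^{\sigma r} u^{\gamma_2}\left(1 - \frac{u}{\sigma r}\right)^{N+\gamma_3} e^{-u}\, du \leq \frac{\Gamma(\gamma_2+1)}{(\sigma r)^{\gamma_2+1}}\, e^{\sigma r},
\]
the last inequality using $(1 - u/(\sigma r))^{N+\gamma_3} \leq 1$ on $[0,\sigma r]$ together with $\int_0^\infty u^{\gamma_2} e^{-u}\, du = \Gamma(\gamma_2+1)$. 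Valid for every $r > 0$, this estimate telescopes with the prefactor to yield the claimed bound with $K_1 = \Gamma(\gamma_2+1)/\sigma^{\gamma_2+1}$.

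For the near-origin bound (\ref{Ckgamma_f_bds_origin_prop}), the same normalized integral together with $|f(\tau h^{1/k}, m)| \leq C_1' |\tau|^k h\, (1+|m|)^{-\mu} e^{-\beta|m|}$ coming from (\ref{bds_f_tauk_near_zero}) gives
\[
|\mathcal{C}_{k,\gamma_2,\gamma_3}(f)(\tau,m)| \leq C_1' |\tau|^{k(\gamma_2+\gamma_3+3)} (1+|m|)^{-\mu} e^{-\beta|m|} \int_0^1 (1-h)^{\gamma_2} h^{\gamma_3+1}\, dh,
\]
where the residual integral equals $\Gamma(\gamma_2+1)\Gamma(\gamma_3+2)/\Gamma(\gamma_2+\gamma_3+3)$ directly, and factoring $|\tau|^{k(\gamma_2+\gamma_3+2)} \leq \rho^{k(\gamma_2+\gamma_3+2)}$ produces the stated inequality.

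The main obstacle is the subtlety in part 1): a naive domination $e^{\sigma|\tau|^k h} \leq e^{\sigma|\tau|^k}$ under the integral sign would only deliver the polynomial power $|\tau|^{k(N+\gamma_2+\gamma_3+2)}$ instead of the sharper $|\tau|^{k(N+1+\gamma_3)}$, so the Laplace-type substitution recovering the factor $|\tau|^{-k(\gamma_2+1)}$ is essential. It leans crucially on $\gamma_2 > -1$, which simultaneously guarantees integrability of $u^{\gamma_2} e^{-u}$ at the origin and finiteness of $\Gamma(\gamma_2+1)$.
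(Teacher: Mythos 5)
Your proof is correct, and part 1) takes a genuinely different and more elementary route than the paper. After the normalization $s=\tau^k h$ (which the paper also employs, but only for the near-origin estimate in part 2)), you handle
\[
J(r)=\int_0^1(1-h)^{\gamma_2}h^{N+\gamma_3}e^{\sigma r h}\,dh
\]
by the single Laplace-type change of variable $u=\sigma r(1-h)$, concentrating near $h=1$, dropping the factor $(1-u/(\sigma r))^{N+\gamma_3}\le 1$ (licensed by $N+\gamma_3\ge 1$), and extending the resulting integral to $[0,\infty)$ to land on $\Gamma(\gamma_2+1)$. This yields the crucial gain of $(\sigma r)^{-(\gamma_2+1)}$ in one stroke, with the closed-form constant $K_1=\Gamma(\gamma_2+1)/\sigma^{\gamma_2+1}$. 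The paper instead expands $e^{\sigma h}$ as a power series, applies the Beta integral term by term, controls the resulting ratios of Gamma functions via Stirling-type estimates, and finally invokes the large-argument asymptotics of the Wiman function $E_{\alpha,\beta}$ to resum the series; this is more machinery for the same $r^{-(\gamma_2+1)}e^{\sigma r}$ behaviour of $J$, and it leaves the constant implicit. Your one-line substitution is cleaner and gives an explicit $K_1$; the paper's series approach is the template the authors reuse later (they already rely on the Wiman-function asymptotics elsewhere), so it keeps the toolkit uniform across propositions. Part 2) and the analytic continuation argument (driven by $k(\gamma_2+\gamma_3+2)\in\mathbb{N}$) are essentially identical to the paper's.
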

\begin{proof}
We first investigate the global behaviour of the convolution operator $\mathcal{C}_{k,\gamma_{2},\gamma_{3}}$ w.r.t $\tau$ on the unbounded
sector $S_{d}$, namely the point 1). Owing to the assumed bounds (\ref{bds_f_taukN_exp_sigma_tauk}), we get
\begin{equation}
|\mathcal{C}_{k,\gamma_{2},\gamma_{3}}(f)(\tau,m)| \leq C_{1}|\tau|^{k} \int_{0}^{|\tau|^{k}} (|\tau|^{k} -h)^{\gamma_{2}}
h^{\gamma_{3}} h^{N} \exp( \sigma h) dh (1 + |m|)^{-\mu} e^{-\beta |m|} \label{Ckgamma_f_bds1}
\end{equation}
In the next part of the proof, we need to focus on sharp upper bounds for the function
$$ G(x) = \int_{0}^{x} \exp( \sigma h) h^{\gamma_{3}+N}(x-h)^{\gamma_{2}} dh $$
We move onward as in Proposition 1 of \cite{lama2} but we need to keep track on the constants appearing in the bounds in order to provide
accurate estimates regarding the dependence with respect to the constants $\gamma_{3}$ and $N$. In accordance with the
uniform expansion $e^{\sigma h} = \sum_{n \geq 0} (\sigma h)^{n}/n!$
on every compact interval $[0,x]$, $x \geq 0$, we can write down the expansion
$$ G(x) = \sum_{n \geq 0} \frac{\sigma^{n}}{n!} \int_{0}^{x} h^{n+N+\gamma_{3}} (x-h)^{\gamma_{2}} dh $$
According to the Beta integral formula (see Appendix B from \cite{ba2}), we recall that
\begin{equation}
\int_{0}^{x} (x-h)^{\alpha - 1} h^{\beta - 1} dh = x^{\alpha + \beta - 1} \frac{\Gamma(\alpha)\Gamma(\beta)}{\Gamma( \alpha + \beta)}
\label{Beta_integral}
\end{equation}
holds for any real numbers $x \geq 0$ and $\alpha>0$, $\beta>0$. Therefore, since $N+\gamma_{3} \geq 1$ and
$\gamma_{2} > -1$, we can rewrite
$$ G(x) = \sum_{n \geq 0} \frac{\sigma^{n}}{n!} \frac{\Gamma(\gamma_{2}+1)\Gamma(n+N+1+\gamma_{3})}{\Gamma(n+N+2+\gamma_{2}+\gamma_{3})}
x^{n+1+\gamma_{2}+\gamma_{3}+N} $$
for all $x \geq 0$. On the other hand, as a consequence of the Stirling formula $\Gamma(x) \sim (2\pi)^{1/2}x^{x}e^{-x}x^{-1/2}$
as $x \rightarrow +\infty$, for any given $a>0$, there exist two constants $K_{1.1},K_{1.2}>0$ (depending on $a$) such that
\begin{equation}
\frac{K_{1.1}}{x^{a}} \leq \frac{\Gamma(x)}{\Gamma(x+a)} \leq \frac{K_{1.2}}{x^{a}} \label{quotient_Gamma}
\end{equation}
for all $x \geq 1$. As a result, we get a constants $K_{1.2}>0$ (depending on $\gamma_{2}$) for which
$$ \frac{\Gamma(n+N+1+\gamma_{3})}{\Gamma(n+N+1+\gamma_{3}+\gamma_{2}+1)} \leq
\frac{K_{1.2}}{(n+N+1+\gamma_{3})^{\gamma_{2}+1}}
\leq \frac{K_{1.2}}{(n+1)^{\gamma_{2}+1}} $$
for all $n \geq 0$. Hence, we get a constant $K_{1.3}>0$ (depending on $\gamma_{2}$)
$$ G(x) \leq K_{1.3} x^{1 + \gamma_{2}+\gamma_{3}+N} \sum_{n \geq 0} \frac{1}{(n+1)^{\gamma_{2}+1}n!} (\sigma x)^{n} $$
for all $x \geq 0$.
A second application of (\ref{quotient_Gamma}), shows the existence of a constant $K_{1.1}>0$ (depending in $\gamma_{2}$) for which
$$ \frac{1}{(n+1)^{\gamma_{2}+1}} \leq \frac{\Gamma(n+1)}{K_{1.1}\Gamma(n+\gamma_{2}+2)} $$
holds for all $n \geq 0$. Subsequently, we obtain a constant $K_{1.4}>0$ (depending on $\gamma_{2}$) such that
$$ G(x) \leq K_{1.4} x^{1+\gamma_{2}+\gamma_{3}+N} \sum_{n \geq 0} \frac{(\sigma x)^{n}}{\Gamma(n + \gamma_{2} + 2)} $$
for all $x \geq 0$.

Owing to the asymptotic property at infinity of the Wiman function
$E_{\alpha,\beta}(z) = \sum_{n \geq 0} z^{n}/\Gamma( \beta + \alpha n)$, for any $\alpha,\beta>0$ stated in \cite{erd} p. 210 we get
a constant $K_{1.5}>0$ (depending on $\gamma_{2}$,$\sigma$) with
\begin{equation}
G(x) \leq K_{1.5} x^{\gamma_{3}+N} e^{\sigma x} 
\end{equation}
for all $x \geq 0$. In accordance with this last inequality, by going back to our departing inequality
(\ref{Ckgamma_f_bds1}), we obtain the expected bounds stated in the inequality (\ref{Ckgamma_f_bds_prop}), namely
\begin{equation}
|\mathcal{C}_{k,\gamma_{2},\gamma_{3}}(f)(\tau,m)| \leq C_{1}K_{1.5}|\tau|^{k(N+1)} |\tau|^{k\gamma_{3}}
\exp( \sigma |\tau|^{k} ) (1 + |m|)^{-\mu} e^{-\beta |m|}
\end{equation}
for all $\tau \in S_{d}$, all $m \in \mathbb{R}$.\medskip

In a second part of the proof, we study local properties near the origin w.r.t $\tau$. First, we can rewrite
$\mathcal{C}_{k,\gamma_{2},\gamma_{3}}$ by using the parametrization $s=\tau^{k}u$ for $0 \leq u \leq 1$. Namely,
\begin{equation}
\mathcal{C}_{k,\gamma_{2},\gamma_{3}}(f)(\tau,m) = \tau^{k(\gamma_{2}+\gamma_{3}+2)} \int_{0}^{1} (1-u)^{\gamma_{2}} u^{\gamma_{3}}
f(\tau u^{1/k},m) du \label{C_kgamma_local_origin}
\end{equation}
holds for all $\tau \in D(0,\rho) \setminus L_{-}$ whenever $m \in \mathbb{R}$. Under the fourth assumption of (\ref{constraints_gamma_k})
and from the construction of
$f(\tau,m)$, the representation (\ref{C_kgamma_local_origin}) induces that for all $m \in \mathbb{R}$,
$\tau \mapsto \mathcal{C}_{k,\gamma_{2},\gamma_{3}}(f)(\tau,m)$ extends analytically on $D(0,\rho) \setminus L_{-}$. Furthermore,
granting to (\ref{bds_f_tauk_near_zero}), one can deduce the bounds
$$
|\mathcal{C}_{k,\gamma_{2},\gamma_{3}}(f)(\tau,m)| \leq C_{1}' |\tau|^{k} \int_{0}^{|\tau|^{k}} (|\tau|^{k} -h)^{\gamma_{2}}
h^{\gamma_{3}+1} dh (1 + |m|)^{-\mu} e^{-\beta |m|}
$$
for all $\tau \in D(0,\rho) \setminus L_{-}$, all $m \in \mathbb{R}$. With the help of (\ref{Beta_integral}), we deduce that
\begin{multline}
|\mathcal{C}_{k,\gamma_{2},\gamma_{3}}(f)(\tau,m)| \leq C_{1}'|\tau|^{k}
\frac{\Gamma(\gamma_{2}+1)\Gamma(\gamma_{3}+2)}{\Gamma( \gamma_{2}+\gamma_{3} + 3)} |\tau|^{k(\gamma_{2}+\gamma_{3}+2)}
(1+|m|)^{-\mu} e^{-\beta |m|} \\
\leq C_{1}'\frac{\Gamma(\gamma_{2}+1)\Gamma(\gamma_{3}+2)}{\Gamma(\gamma_{2}+\gamma_{3}+3)}
\rho^{k(\gamma_{2}+\gamma_{3}+2)}|\tau|^{k} (1 + |m|)^{-\mu}e^{-\beta |m|}
\end{multline}
holds when $\tau \in D(0,\rho) \setminus L_{-}$, all $m \in \mathbb{R}$ which is rephrased in
(\ref{Ckgamma_f_bds_origin_prop}).
\end{proof}

\begin{prop} Let $k \in (\frac{1}{2},1)$ be a real number. Let $(\tau,m) \mapsto f(\tau,m)$ be a continuous function on
$S_{d} \times \mathbb{R}$, holomorphic w.r.t $\tau$ on $S_{d}$ for which there exist constants $C_{2}>0$, $\nu>0$ and $\mu>1$, $\beta>0$
fulfilling
\begin{equation}
|f(\tau,m)| \leq C_{2}|\tau|^{k} \exp( \nu |\tau|^{k} ) (1 + |m|)^{-\mu} e^{-\beta |m|} 
\end{equation}
for all $\tau \in S_{d}$, all $m \in \mathbb{R}$. Take for granted that for all $m \in \mathbb{R}$, the map $\tau \mapsto f(\tau,m)$
extends analytically on the cut disc $D(0,\rho) \setminus L_{-}$ suffering the next bounds : there exists a constant $C_{2}'>0$
with
\begin{equation}
|f(\tau,m)| \leq C_{2}' |\tau|^{k} (1 + |m|)^{-\mu} e^{-\beta |m|} 
\end{equation}
for all $\tau \in D(0,\rho) \setminus L_{-}$, all $m \in \mathbb{R}$.

Let $\kappa_{l}>0$ be a real number. We consider the operator
\begin{equation}
(\exp( -\kappa_{l} \mathcal{C}_{k} )f)(\tau,m) := \sum_{p \geq 0} \frac{(-1)^{p}\kappa_{l}^{p}}{p!} \mathcal{C}_{k}^{(p)}(f)(\tau,m)
\end{equation}
where $\mathcal{C}_{k}^{(p)}$ denotes the iterate of order $p \geq 0$ of the operator $\mathcal{C}_{k}$ defined as
$$ \mathcal{C}_{k}(f)(\tau,m) = \frac{k\tau^{k}}{\Gamma(\frac{1}{k}-1)} \int_{0}^{\tau^{k}} (\tau^{k}-s)^{\frac{1}{k}-2}f(s^{1/k},m) ds
= \frac{k}{\Gamma(\frac{1}{k}-1)} \mathcal{C}_{k,\frac{1}{k}-2,0}(f)(\tau,m) $$
with the convention that $\mathcal{C}_{k}^{(0)}(f)(\tau,m) = f(\tau,m)$. Then,\\
1) The map $(\tau,m) \mapsto (\exp( -\kappa_{l} \mathcal{C}_{k} )f)(\tau,m)$ represents a continuous function on
$S_{d} \times \mathbb{R}$, holomorphic w.r.t $\tau$ on $S_{d}$ for which there exists a constant $K_{1}>0$ (depending on $k$,$\nu$) such that
\begin{equation}
|(\exp( -\kappa_{l} \mathcal{C}_{k} )f)(\tau,m)| \\
\leq C_{2}|\tau|^{k} \exp( \kappa_{l} K_{1}
\frac{k}{\Gamma(\frac{1}{k}-1)} |\tau|^{k} ) \exp( \nu |\tau|^{k} ) (1 + |m|)^{-\mu} e^{-\beta |m|} \label{bds_exp_kappaCk_Sd}
\end{equation}
for all $\tau \in S_{d}$, all $m \in \mathbb{R}$.\\
2) For all $m \in \mathbb{R}$, the function $\tau \mapsto (\exp( -\kappa_{l} \mathcal{C}_{k} )f)(\tau,m)$ extends analytically on
$D(0,\rho) \setminus L_{-}$. Furthermore,
\begin{equation}
|(\exp( -\kappa_{l} \mathcal{C}_{k} )f)(\tau,m)| \leq \exp( \frac{\kappa_{l} k \rho}{\Gamma(\frac{1}{k}+1)} )
C_{2}' |\tau|^{k} (1 + |m|)^{-\mu} e^{ -\beta |m| } \label{bds_exp_kappaCk_origin}
\end{equation}
for all $\tau \in D(0,\rho) \setminus L_{-}$, all $m \in \mathbb{R}$.
\end{prop}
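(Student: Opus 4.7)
The plan is to reduce both claims to iterated applications of Proposition 1 and then sum the resulting series. The key preliminary observation is that $\mathcal{C}_{k}$ coincides, up to the constant $k/\Gamma(1/k-1)$, with $\mathcal{C}_{k,\gamma_{2},\gamma_{3}}$ for $\gamma_{2}=1/k-2$ and $\gamma_{3}=0$. Since $k\in(1/2,1)$, these values satisfy $\gamma_{2}>-1$, $\gamma_{3}\geq 0$, $\gamma_{2}+\gamma_{3}+2=1/k\geq 0$, and $k(\gamma_{2}+\gamma_{3}+2)=1\in\mathbb{N}$, so constraints (\ref{constraints_gamma_k}) hold and Proposition 1 is available.

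For part 1, I would induct on $p$, starting from the hypothesis on $f$ which is precisely the case $N=1$, $\sigma=\nu$ of (\ref{bds_f_taukN_exp_sigma_tauk}). One application of (\ref{Ckgamma_f_bds_prop}) produces a bound on $\mathcal{C}_{k}(f)$ of the same shape, with $N$ raised to $N+1$, the same exponential type $\nu$, and an extra multiplicative factor $K_{1}\cdot k/\Gamma(1/k-1)$, where $K_{1}$ depends only on $k$ (through $\gamma_{2}$) and on $\nu$. Because $\gamma_{2}$ and $\sigma=\nu$ are frozen through the iteration, the same $K_{1}$ is reusable at every step. An induction on $p$ then yields
\begin{equation*}
|\mathcal{C}_{k}^{(p)}(f)(\tau,m)|\leq C_{2}\Bigl(\tfrac{kK_{1}}{\Gamma(1/k-1)}\Bigr)^{p}|\tau|^{k(p+1)}\exp(\nu|\tau|^{k})(1+|m|)^{-\mu}e^{-\beta|m|}.
\end{equation*}
Multiplying by $\kappa_{l}^{p}/p!$ and summing over $p$ recognises the exponential series in $\kappa_{l}K_{1}\frac{k}{\Gamma(1/k-1)}|\tau|^{k}$ and delivers (\ref{bds_exp_kappaCk_Sd}).

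For part 2, I would run the parallel induction using the local estimate (\ref{Ckgamma_f_bds_origin_prop}). The Gamma quotient simplifies to $\Gamma(1/k-1)\Gamma(2)/\Gamma(1/k+1)$, which cancels the $\Gamma(1/k-1)$ from the normalisation of $\mathcal{C}_{k}$, so that one application of $\mathcal{C}_{k}$ multiplies the existing bound by $k\rho/\Gamma(1/k+1)$ and, crucially, preserves the power $|\tau|^{k}$. The induction then gives $|\mathcal{C}_{k}^{(p)}(f)(\tau,m)|\leq (k\rho/\Gamma(1/k+1))^{p}C_{2}'|\tau|^{k}(1+|m|)^{-\mu}e^{-\beta|m|}$, and summing against $\kappa_{l}^{p}/p!$ produces exactly the exponential factor of (\ref{bds_exp_kappaCk_origin}), with an argument independent of $\tau$. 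The continuity in $(\tau,m)$ and the analyticity (resp.\ analytic continuation) in $\tau$ on $S_{d}$ (resp.\ on $D(0,\rho)\setminus L_{-}$) are inherited from those of each $\mathcal{C}_{k}^{(p)}(f)$, given by Proposition 1, via uniform convergence of the defining series on compact subsets, which the bounds above guarantee.

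The genuine difficulty is not the induction itself but the requirement that the exponential type $\nu$ be preserved under every single application of $\mathcal{C}_{k}$; otherwise a factor $\exp(\nu_{p}|\tau|^{k})$ with $\nu_{p}\to\infty$ would contaminate the terms and prevent the series in $p$ from reassembling into a clean exponential. This preservation is precisely what the sharp Wiman-function estimate hidden inside Proposition 1 provides, and it is why a cruder Gronwall-type bound on the iterated convolution would not suffice to reach (\ref{bds_exp_kappaCk_Sd}).
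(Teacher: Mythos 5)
Your proposal is correct and follows essentially the same route as the paper's proof: induction on the iterate index using Proposition~1 with $\gamma_{2}=\tfrac{1}{k}-2$, $\gamma_{3}=0$ to establish the bounds $|\mathcal{C}_{k}^{(N)}f|\leq C_{2}(kK_{1}/\Gamma(\tfrac{1}{k}-1))^{N}|\tau|^{k(N+1)}e^{\nu|\tau|^{k}}(\cdots)$ on $S_{d}$ and $|\mathcal{C}_{k}^{(N)}f|\leq C_{2}'(k\rho/\Gamma(\tfrac{1}{k}+1))^{N}|\tau|^{k}(\cdots)$ on the cut disc, followed by summation of the exponential series. Your closing remark on why preservation of the exponential type $\nu$ (via the Wiman estimate inside Proposition~1) is the crux is a helpful observation but not a departure from the argument.
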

\begin{proof} We need to provide estimates for each iterate $\mathcal{C}_{k}^{(N)}(f)(\tau,m)$ for $N \geq 1$. We first focus
on bounds control on the unbounded sector $S_{d}$. By induction on $N \geq 0$, with the help of the bounds
(\ref{bds_f_taukN_exp_sigma_tauk}) and (\ref{Ckgamma_f_bds_prop}) for $\gamma_{2}=\frac{1}{k}-2$ and
$\gamma_{3}=0$, we obtain a constant $K_{1}>0$ (depending on $k$,$\nu$) with
\begin{equation}
|\mathcal{C}_{k}^{(N)}(f)(\tau,m)| \leq C_{2}(\frac{k}{\Gamma(\frac{1}{k}-1)})^{N} K_{1}^{N}
|\tau|^{k(N+1)} \exp(\nu |\tau|^{k}) (1 + |m|)^{-\mu} e^{-\beta |m|} \label{bds_CkNf_onSd}
\end{equation}
for all $\tau \in S_{d}$, all $m \in \mathbb{R}$, all $N \geq 0$. Similarly, owing to (\ref{bds_f_tauk_near_zero}) and
(\ref{Ckgamma_f_bds_origin_prop})
for the same choice $\gamma_{2}=\frac{1}{k}-2$ and $\gamma_{3}=0$, we get that
\begin{equation}
|\mathcal{C}_{k}^{(N)}(f)(\tau,m)| \leq C_{2}' k^{N} (\frac{\Gamma(2)}{\Gamma(\frac{1}{k}+1)})^{N} \rho^{N}
|\tau|^{k}(1+|m|)^{-\mu} e^{-\beta |m|} \label{bds_CkNf_near_origin}
\end{equation}
for all $\tau \in D(0,\rho) \setminus L_{-}$, all $m \in \mathbb{R}$, all $N \geq 0$. 

Finally, by summing up the inequalities
(\ref{bds_CkNf_onSd}) (resp. (\ref{bds_CkNf_near_origin}) ) over $N \geq 0$ we get the forecast bounds (\ref{bds_exp_kappaCk_Sd})
(resp. (\ref{bds_exp_kappaCk_origin}) ).
\end{proof}

\begin{prop} Let $Q_{1}(X),Q_{2}(X),R(X)$ be polynomials with complex coefficients such that
\begin{equation}
\mathrm{deg}(R) \geq \mathrm{deg}(Q_{1}) \ \ , \ \ \mathrm{deg}(R) \geq \mathrm{deg}(Q_{2}) \ \ , \ \ R(im) \neq 0
\label{R>Q1_R>Q2_R_nonvanish}
\end{equation}
for all $m \in \mathbb{R}$. Take for granted that $\mu > \max( \mathrm{deg}(Q_{1})+1, \mathrm{deg}(Q_{2})+1 )$. Then, there exists
a constant $C_{3}>0$ (depending on $Q_{1},Q_{2},R,\mu,k,\nu$) for which
\begin{multline}
||\frac{1}{R(im)} \tau^{k} \int_{0}^{\tau^k} \int_{-\infty}^{+\infty} Q_{1}(i(m-m_{1}))f((\tau^{k}-s)^{1/k},m-m_{1})\\
\times 
Q_{2}(im_{1})g(s^{1/k},m_{1}) \frac{1}{(\tau^{k}-s)s} ds dm_{1} ||_{(\nu,\beta,\mu,k,\rho)} \leq 
C_{3} ||f(\tau,m)||_{(\nu,\beta,\mu,k,\rho)}||g(\tau,m)||_{(\nu,\beta,\mu,k,\rho)}
\end{multline}
holds for all $f(\tau,m),g(\tau,m) \in F_{(\nu,\beta,\mu,k,\rho)}^{d}$.
\end{prop}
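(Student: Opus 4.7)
The plan is to establish the pointwise inequality
$$ |T(f,g)(\tau,m)| \leq C_{3} ||f||_{(\nu,\beta,\mu,k,\rho)} ||g||_{(\nu,\beta,\mu,k,\rho)} (1+|m|)^{-\mu} \frac{|\tau|^{k}}{1+|\tau|^{2k}} e^{\nu |\tau|^{k} - \beta |m|} $$
uniformly over $(\tau,m) \in (S_{d} \cup (D(0,\rho) \setminus L_{-})) \times \mathbb{R}$, where $T(f,g)(\tau,m)$ denotes the bilinear expression whose norm is being estimated. Inserting the pointwise bounds provided by the norm $||.||_{(\nu,\beta,\mu,k,\rho)}$ evaluated at the arguments $((\tau^{k}-s)^{1/k},m-m_{1})$ for $f$ and $(s^{1/k},m_{1})$ for $g$, the two numerator factors $|\tau^{k}-s|$ and $|s|$ arising from the weight $|\tau|^{k}/(1+|\tau|^{2k})$ cancel the singular factor $1/((\tau^{k}-s)s)$ precisely, and the problem splits cleanly into an $m$-integration and a $\tau$-integration.

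I first tackle the $m$-integration. The polynomials $Q_{1},Q_{2}$ obey $|Q_{j}(im')| \leq C_{j}(1+|m'|)^{\mathrm{deg}(Q_{j})}$, while the non-vanishing of $R(im)$ on $\mathbb{R}$ combined with its behaviour at infinity yields $|R(im)| \geq c_{R}(1+|m|)^{\mathrm{deg}(R)}$ for some $c_{R}>0$. Using the triangle inequality $e^{-\beta|m-m_{1}|}e^{-\beta|m_{1}|} \leq e^{-\beta|m|}$ to extract the expected exponential decay in $m$, one reduces to controlling
$$ (1+|m|)^{\mu - \mathrm{deg}(R)} \int_{-\infty}^{+\infty} \frac{dm_{1}}{(1+|m-m_{1}|)^{\mu - \mathrm{deg}(Q_{1})} (1+|m_{1}|)^{\mu - \mathrm{deg}(Q_{2})}}. $$
Splitting the integration region at $|m_{1}|=|m|/2$ and using $|m-m_{1}| \geq |m|/2$ on one side and $|m_{1}| \geq |m|/2$ on the other, the hypothesis $\mu > \max(\mathrm{deg}(Q_{1})+1, \mathrm{deg}(Q_{2})+1)$ forces both half-integrals to converge and yields the upper bound $C(1+|m|)^{-\min(\mu - \mathrm{deg}(Q_{1}),\, \mu - \mathrm{deg}(Q_{2}))}$. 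The resulting net exponent $\max(\mathrm{deg}(Q_{1}),\mathrm{deg}(Q_{2})) - \mathrm{deg}(R)$ is nonpositive by (\ref{R>Q1_R>Q2_R_nonvanish}), so the $m$-contribution is a universal constant.

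For the $\tau$-part, I parametrize the segment from $0$ to $\tau^{k}$ by $s = \tau^{k} u$ with $u \in [0,1]$. This contour is legitimate since $\tau u^{1/k}$ and $\tau(1-u)^{1/k}$ remain in $S_{d}$ (respectively in $D(0,\rho) \setminus L_{-}$) whenever $\tau$ is, so both $f$ and $g$ are analytic along the path. The identity $|\tau^{k}-s|+|s|=|\tau|^{k}$ lifts the exponential $e^{\nu |\tau|^{k}}$ outside the integral, and what remains is
$$ |\tau|^{2k} \int_{0}^{1} \frac{du}{(1+|\tau|^{2k}(1-u)^{2})(1+|\tau|^{2k}u^{2})}. $$
Setting $X = |\tau|^{2k}$ and splitting $[0,1]$ at $1/2$, the bound $1+X(1-u)^{2} \geq 1+X/4$ on $[0,1/2]$ (and symmetrically on $[1/2,1]$) gives an inner integral of order $C/(1+X^{3/2})$; multiplication by $X$ yields $CX/(1+X^{3/2})$, dominated by $C|\tau|^{k}/(1+|\tau|^{2k})$ uniformly for small and large $|\tau|$.

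The main obstacle is recovering this sharp $|\tau|^{k}/(1+|\tau|^{2k})$ decay, which requires the delicate interval split above rather than a direct brute estimate; the specific shape of the weight in Definition 4 is engineered precisely to match this algebraic balance. A secondary technical point is the justification of the contour deformation to the straight parametrized segment in the cut-disc regime, where staying clear of $L_{-}$ along the deformation requires some care.
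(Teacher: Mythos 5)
Your proof is correct and follows the same strategy as the paper's proof of Proposition~3: insert the pointwise weight bounds from Definition~4, note that the numerator factors $|\tau^{k}-s|$ and $|s|$ cancel the singular kernel $1/((\tau^{k}-s)s)$, use the additivity $|\tau^{k}-s|+|s|=|\tau|^{k}$ along the segment to pull out $e^{\nu|\tau|^{k}}$, and split the remaining bound into the product of an $m$-integral and a $\tau$-integral. The only differences are tactical: you prove the $m$-integral's convergence directly via the $|m_{1}|=|m|/2$ split where the paper invokes Lemma~4 of \cite{ma}, and you estimate the $\tau$-integral by an interval split at $u=1/2$ where the paper computes it in closed form as $2(\log(1+x^{2})+x\arctan x)/(x(x^{2}+4))$; both routes deliver the required $O(1/(1+|\tau|^{2k}))$ factor.
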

\begin{proof} We proceed as in the proof of Proposition 3 of \cite{lama1}. Namely, according to the norm's definition 4, we can rewrite
\begin{multline}
B :=  ||\frac{1}{R(im)} \tau^{k} \int_{0}^{\tau^k} \int_{-\infty}^{+\infty} Q_{1}(i(m-m_{1}))f((\tau^{k}-s)^{1/k},m-m_{1})\\
\times 
Q_{2}(im_{1})g(s^{1/k},m_{1}) \frac{1}{(\tau^{k}-s)s} ds dm_{1}||_{(\nu,\beta,\mu,k,\rho)} =
\sup_{\tau \in (D(0,\rho) \setminus L_{-}) \cup S_{d}, m \in \mathbb{R}}
(1 + |m|)^{\mu} e^{\beta |m|} \frac{1 + |\tau|^{2k}}{|\tau|^{k}} \\
\times \exp(-\nu |\tau|^{k})
\times \left| \tau^{k} \int_{0}^{\tau^{k}} \int_{-\infty}^{+\infty} \{ (1 + |m-m_{1}|)^{\mu} e^{\beta |m-m_{1}|}
\frac{ 1 + |\tau^{k} - s|^{2}}{|\tau^{k} - s|} \exp( -\nu |\tau^{k} - s| ) \right. \\
\times f((\tau^{k}-s)^{1/k},m-m_{1}) \} \times \{ (1 + |m_{1}|)^{\mu} e^{\beta |m_{1}|} \frac{1 + |s|^{2}}{|s|}
\exp( -\nu |s| ) g(s^{1/k},m_{1}) \} \\
\left. \times \mathcal{B}(\tau,s,m,m_{1}) ds dm_{1} \right|
\end{multline}
for
\begin{multline*}
\mathcal{B}(\tau,s,m,m_{1}) = \frac{e^{-\beta |m-m_{1}|}e^{-\beta|m_{1}|}}{(1 + |m-m_{1}|)^{\mu}(1 + |m_{1}|)^{\mu}}
\frac{Q_{1}(i(m-m_{1})Q_{2}(im_{1})}{R(im)} \frac{|s| |\tau^{k}-s|}{(1 + |\tau^{k}-s|^{2})(1 + |s|^{2})}\\
\times
\exp(\nu|\tau^{k} -s|) \exp( \nu |s| ) \frac{1}{(\tau^{k}-s)s}.
\end{multline*}
According to the triangular inequality $|m| \leq |m-m_{1}| + |m_{1}|$ and bearing in mind the definition of the norms of $f$ and
$g$, we deduce
\begin{equation}
B \leq C_{3.1} ||f(\tau,m)||_{(\nu,\beta,\mu,k,\rho)}||g(\tau,m)||_{(\nu,\beta,\mu,k,\rho)}
\end{equation}
where
\begin{multline}
C_{3.1} = \sup_{\tau \in (D(0,\rho) \setminus L_{-}) \cup S_{d}, m \in \mathbb{R}}
(1 + |m|)^{\mu} \frac{1 + |\tau|^{2k}}{|\tau|^{k}} \exp( -\nu |\tau|^{k} )
|\tau|^{k} \\
\times \int_{0}^{|\tau|^{k}} \int_{-\infty}^{+\infty} \frac{|Q_{1}(i(m-m_{1}))||Q_{2}(im_{1})|}{R(im) (1 + |m-m_{1}|)^{\mu}
(1 + |m_{1}|)^{\mu} } \times
\frac{ \exp( \nu( |\tau|^{k} - h)) \exp( \nu h) }{(1 + (|\tau|^{k} - h)^{2})(1 + h^{2}) } dh dm_{1}.
\end{multline}
Now, we get bounds from above that can be broken up in two parts $C_{3.1} \leq C_{3.2}C_{3.3}$ where
$$
C_{3.2} = \sup_{m \in \mathbb{R}} (1 + |m|)^{\mu} \frac{1}{|R(im)|} \int_{-\infty}^{+\infty}
\frac{|Q_{1}(i(m-m_{1}))||Q_{2}(im_{1})|}{(1 + |m-m_{1}|)^{\mu} (1 + |m_{1}|)^{\mu}} dm_{1}
$$
and
$$
C_{3.3} = \sup_{\tau \in (D(0,\rho) \setminus L_{-}) \cup S_{d}} (1 + |\tau|^{2k}) \int_{0}^{|\tau|^{k}}
\frac{1}{(1 + (|\tau|^{k} - h)^{2})(1 + h^{2})} dh
$$
In the last step of the proof, we show that $C_{3.2}$ and $C_{3.3}$ have finite values. By construction, three positive constants
$\mathfrak{Q}_{1},\mathfrak{Q}_{2}$ and $\mathfrak{R}$ can be found such that
\begin{multline*}
|Q_{1}(i(m-m_{1}))| \leq \mathfrak{Q}_{1}(1 + |m-m_{1}|)^{\mathrm{deg}(Q_{1})}, \ \
|Q_{2}(im_{1})| \leq Q_{2} (1 + |m_{1}|)^{\mathrm{deg}(Q_{2})},\\ |R(im)| \geq \mathfrak{R}(1 + |m|)^{\mathrm{deg}(R)}
\end{multline*}
for all $m,m_{1} \in \mathbb{R}$. Hence,
\begin{equation}
C_{3.2} \leq \frac{ \mathfrak{Q}_{1} \mathfrak{Q}_{2} }{ \mathfrak{R} }
\sup_{m \in \mathbb{R}} (1 + |m|)^{\mu - \mathrm{deg}(R)}
\int_{-\infty}^{+\infty} \frac{1}{(1 + |m-m_{1}|)^{\mu - \mathrm{deg}(Q_{1})} (1 + |m_{1}|)^{\mu - \mathrm{deg}(Q_{2})}} dm_{1}
\end{equation}
that is finite owing to $\mu > \max( \mathrm{deg}(Q_{1})+1, \mathrm{deg}(Q_{2})+1 )$ submitted to the constraints
(\ref{R>Q1_R>Q2_R_nonvanish}) as shown in Lemma 4 from \cite{ma}. On the other hand,
\begin{multline}
C_{3.3} \leq \sup_{x \geq 0} (1 + x^{2})\int_{0}^{x} \frac{1}{(1 + (x-h)^2)(1+h^2)} dh \\
=
\sup_{x \geq 0} (1+x^{2}) 2 \frac{ \log(1 + x^{2}) + x \arctan(x) }{x (x^{2} + 4)}
\end{multline}
which is also finite.
\end{proof}

\section{Manufacturing of solutions to an auxiliary integral equation relying on a complex parameter}

The main objective of this section is the construction of a unique solution of the equation
(\ref{main_integral_eq_w}) for vanishing initial data within the Banach spaces given in Definition 4.

The first disclose further analytic assumptions on the leading polynomials $Q(X)$ and $R_{D}(X)$ in order to be able to transform our
problem (\ref{main_integral_eq_w}) into a fixed point equation as stated below, see (\ref{fixed_pt_Hepsilon}).

Namely, we take for granted that there exists a bounded sectorial annulus
$$ S_{Q,R_{D}} = \{ z \in \mathbb{C} / r_{Q,R_{D},1} \leq |z| \leq r_{Q,R_{D},2} \ \ , \ \ |\mathrm{arg}(z) - d_{Q,R_{D}}| \leq \eta_{Q,R_{D}} \} $$
with direction $d_{Q,R_{D}} \in [-\pi,\pi)$, small aperture $\eta_{Q,R_{D}}>0$ for some radii
$r_{Q,R_{D},2}> r_{Q,R_{D},1} > 1$ such that
\begin{equation}
\frac{Q(im)}{R_{D}(im)} \in S_{Q,R_{D}} \label{quotient_Q_RD_in_SQRD}
\end{equation} 
for all $m \in \mathbb{R}$. For any integer $l \in \mathbb{Z}$, we set
\begin{equation}
a_{l}(m) = \log |\frac{Q(im)}{R_{D}(im)}| + \sqrt{-1} \mathrm{arg}(\frac{Q(im)}{R_{D}(im)}) + 2l\pi\sqrt{-1}. \label{defin_alm}
\end{equation}
See Figure~\ref{fig:a} for a configuration of the points $a_{l}(m)$, $l\in\mathbb{Z}$, and the set $S_{Q,R_{D}}$ related to their definition.

\begin{figure}
	\centering
		\includegraphics[width=0.45\textwidth]{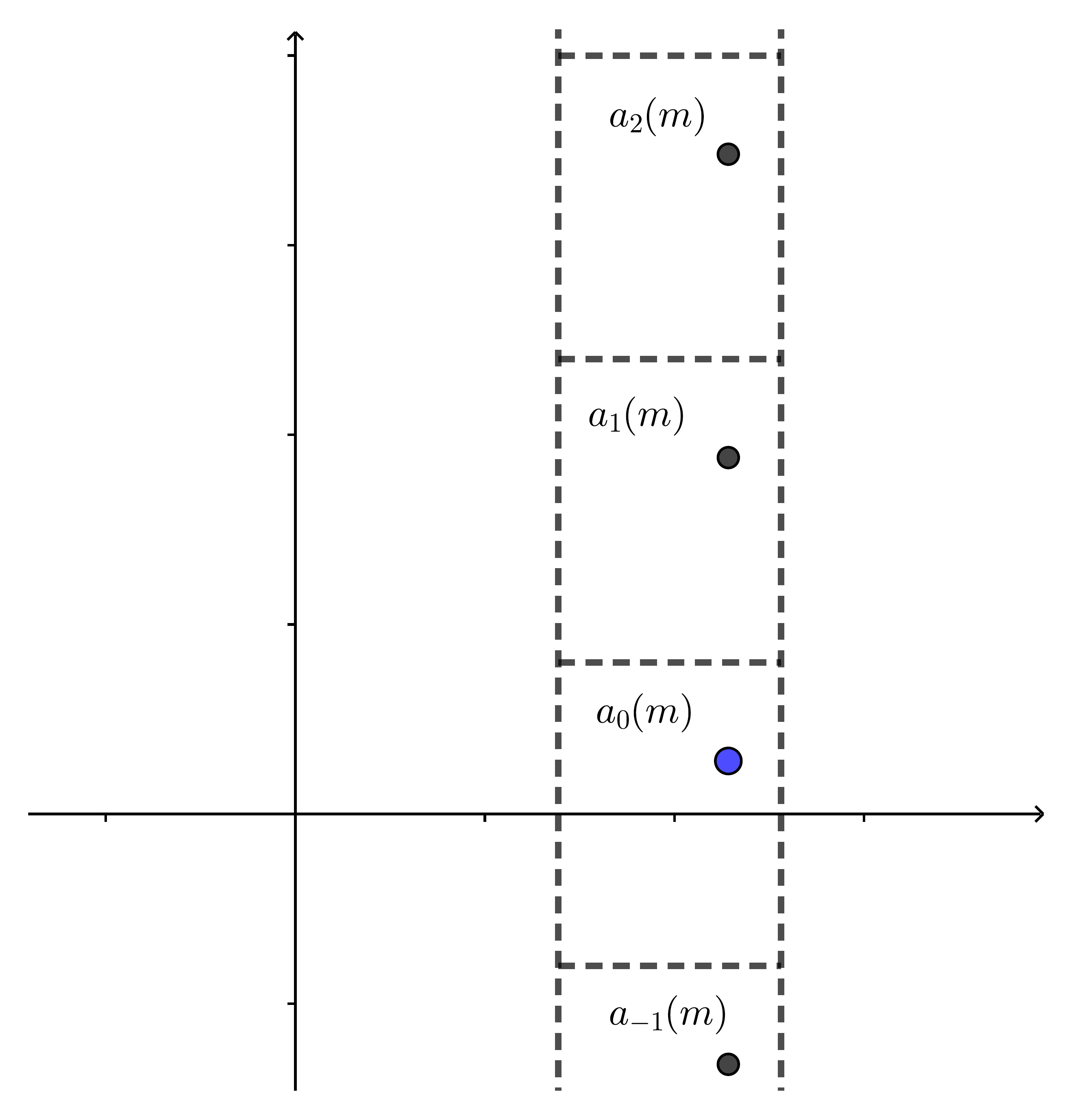}
		\includegraphics[width=0.45\textwidth]{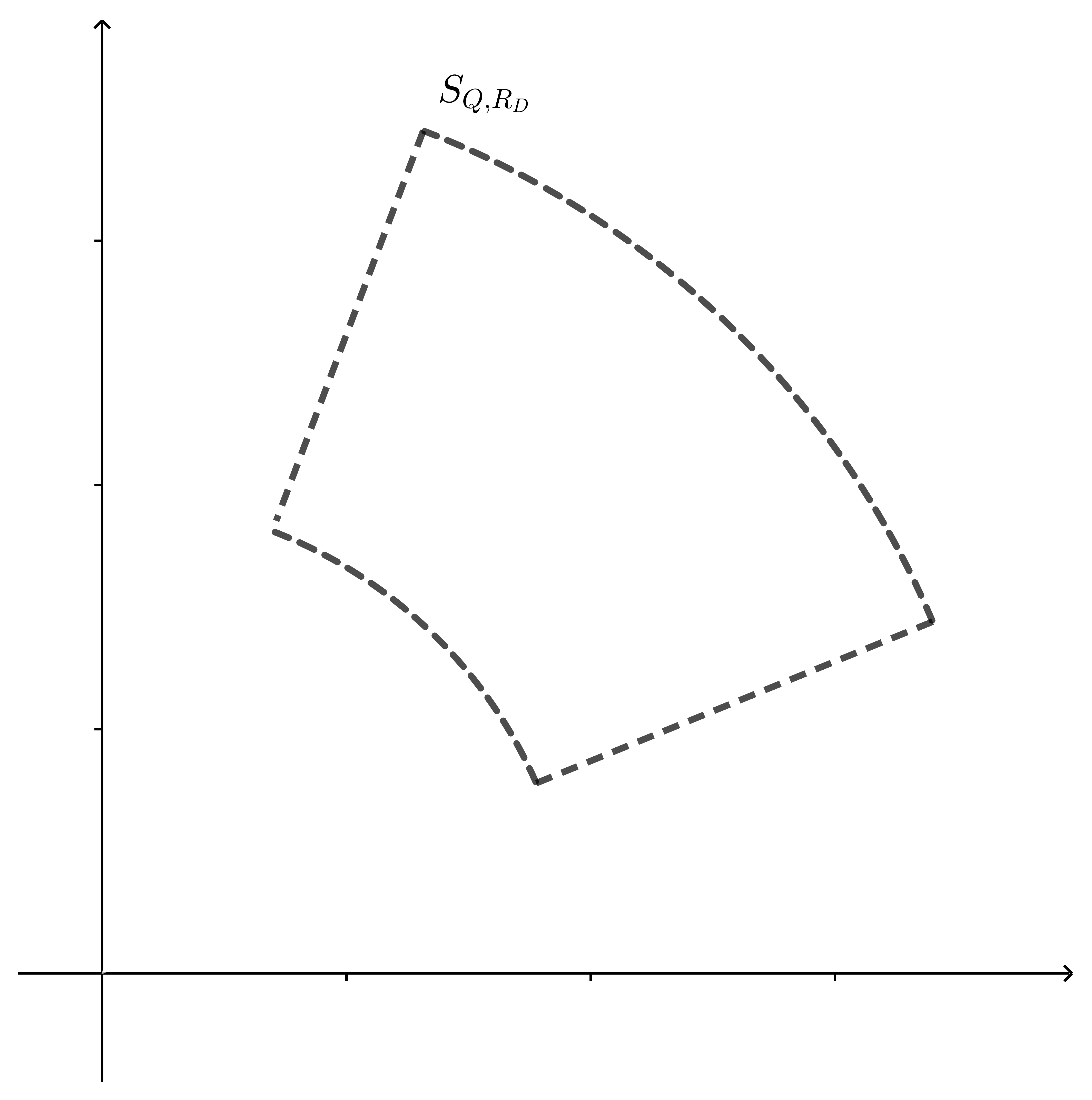}
				\caption{Configuration of $a_{l}(m)$ (left) related to $S_{Q,R_{D}}$ (right)}
				\label{fig:a}
\end{figure}

By construction, we see that
\begin{equation}
Q(im) - e^{a_{l}(m)}R_{D}(im) = 0 \label{eq_alm}
\end{equation}
for all $m \in \mathbb{R}$. Furthermore, for each $l \in \mathbb{Z}$, the equation
\begin{equation}
\alpha_{D}k\tau^{k} = a_{l}(m) \label{eq_tau_alm}
\end{equation}
possesses one solution given by
\begin{equation}
\tau_{l} = |\frac{a_{l}(m)}{\alpha_{D}k}|^{1/k} \exp( \sqrt{-1} \frac{1}{k}\mathrm{arg}(a_{l}(m))). \label{defin_taul}
\end{equation}
Indeed, by construction of $\tau^{k} = \exp(k \log(\tau))$, this equation is equivalent to write
\begin{equation}
|\tau| = |\frac{a_{l}(m)}{\alpha_{D}k}|^{1/k} \ \ , \ \ \mathrm{arg}(\tau) = \frac{\mathrm{arg}(a_{l}(m))}{k} + \frac{2h\pi}{k}
\end{equation}
for some $h \in \mathbb{Z}$. According to the hypothesis $r_{Q,R_{D},1} > 1$, we know that $|\mathrm{arg}(a_{l}(m))| < \pi/2$ and hence
\begin{equation}
|\frac{\mathrm{arg}(a_{l}(m))}{k}| < \frac{\pi}{2k} < \pi
\end{equation}
since we assume that $\frac{1}{2} < k < 1$. Owing to the fact that $\mathrm{arg}(\tau)$ belongs to $(-\pi,\pi)$, it forces $h=0$ and hence
$\mathrm{arg}(\tau) = \mathrm{arg}(a_{l}(m))/k$.

We consider the set
$$\Theta_{Q,R_{D}} = \{ \frac{\mathrm{arg}(a_{l}(m))}{k} / m \in \mathbb{R}, l \in \mathbb{Z} \}$$
of so-called forbidden directions. We choose the aperture $\eta_{Q,R_{D}}>0$ small enough in a way that for all directions
$d \in (-\pi/2,\pi/2) \setminus \Theta_{Q,R_{D}}$, we can find some unbounded sector $S_{d}$ centered at 0 with small aperture
$\delta_{S_{d}}>0$ and
bisecting direction $d$ such that $\tau_{l} \notin S_{d} \cup D(0,\rho)$ for some fixed $\rho>0$ small enough and for all $l \in \mathbb{Z}$.

For all $\tau \in \mathbb{C} \setminus \mathbb{R}_{-}$, all $m \in \mathbb{R}$, we consider the function
\begin{equation}
H(\tau,m) = Q(im) - \exp( \alpha_{D}k\tau^{k} )R_{D}(im) \label{defin_Htaum} 
\end{equation}
Let $d \in (-\pi/2,\pi/2) \setminus \Theta_{Q,R_{D}}$ and take a sector $S_{d}$ and a disc $D(0,\rho)$ as above.

{\bf 1)} Our first goal is to provide lower bounds for the function $|H(\tau,m)|$ when $\tau \in S_{d}$ and $m \in \mathbb{R}$.
Let $\tau \in S_{d}$. Then, we can write
\begin{equation}
\tau = \tau_{l}re^{\sqrt{-1}\theta} \label{factor_tau}
\end{equation}
for some well chosen $l \in \mathbb{Z}$, where $r \geq 0$ and where $\theta$ belong to some small interval $I_{S_{d}}$ which is close to
0 but such that $0 \notin I_{S_d}$. In particular, we choose $I_{S_{d}}$ in a way that $\mathrm{arg}(\tau_{l}) + \theta$ belongs to
$(-\pi,\pi)$ for all $\theta \in I_{S_{d}}$.

Hence, owing to the fact that $\tau_{l}$ solves (\ref{eq_tau_alm}), we can rewrite
$$ \alpha_{D}k \tau^{k} - a_{l}(m) = \alpha_{D}k \tau_{l}^{k} r^{k} e^{\sqrt{-1}k\theta} - a_{l}(m) = a_{l}(m)(r^{k}e^{\sqrt{-1}k\theta} - 1) $$
In particular, if the radius $r_{Q,R_{D},2}>r_{Q,R_{D},1}$ is chosen close enough to $r_{Q,R_{D},1}$, we get a constant $\eta_{1,l}>0$
(depending on $l$) for which
\begin{equation}
|\alpha_{D}k \tau^{k} - a_{l}(m) - \sqrt{-1}h2\pi| \geq \eta_{1,l} \label{lowbds_eq_tauk_alm}
\end{equation}
for all $h \in \mathbb{Z}$, all $\tau \in S_{d}$, all $m \in \mathbb{R}$. Namely, for each $m \in \mathbb{R}$, the set
$$ \mathcal{L}_{l,m} = \{ a_{l}(m)(x e^{\sqrt{-1}k\theta} - 1) / x \geq 0 \}$$
represents a halfline passing through the point $-a_{l}(m)$ and
close to the origin in $\mathbb{C}$ and consequently it avoids the set of points $\{ \sqrt{-1}h 2\pi / h \in \mathbb{Z} \}$.

Figure~\ref{fig:b} illustrates a configuration of some of the halflines described in the construction.

\begin{figure}
	\centering
		\includegraphics[width=0.3\textwidth]{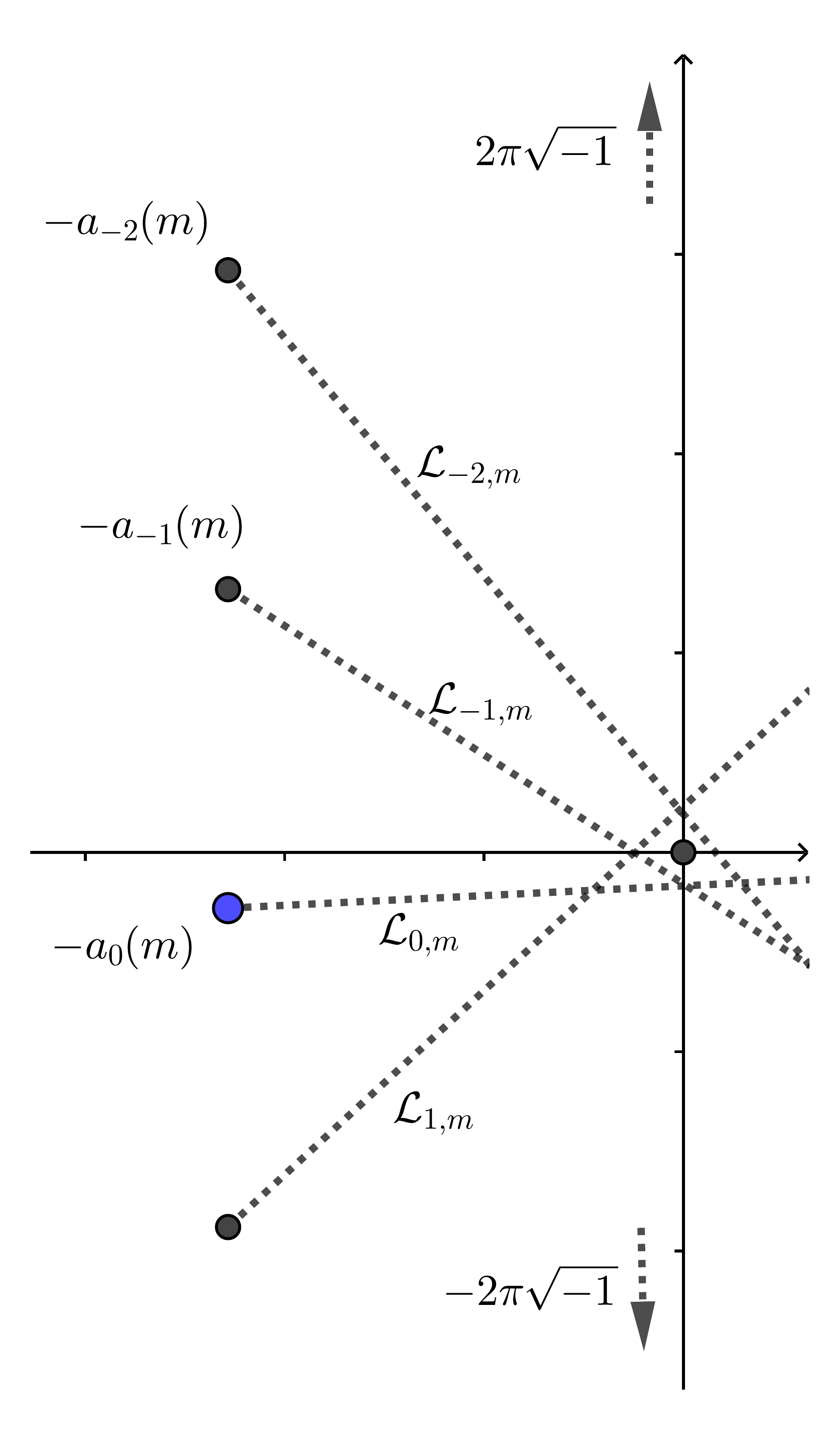}
				\caption{Some of the halflines $\mathcal{L}_{l,m}$}
				\label{fig:b}
\end{figure}

Now, owing to the equality (\ref{eq_alm}), we can rewrite
\begin{multline}
H(\tau,m) = Q(im) - \exp( \alpha_{D}k \tau^{k} - a_{l}(m) ) \exp( a_{l}(m) ) R_{D}(im) \\
= Q(im)
(1 - \exp( \alpha_{D}k \tau^{k} - a_{l}(m) )) \label{factor_H_Q_exp} 
\end{multline}
According to (\ref{lowbds_eq_tauk_alm}), we obtain a constant $\eta_{2,l}>0$ (depending on $l$) for which
\begin{equation}
|H(\tau,m)| \geq |Q(im)|\eta_{2,l} \label{low_bds_Htaum_first} 
\end{equation}
for all $\tau \in S_{d}$, all $m \in \mathbb{R}$.

In a second step, we aim attention at lower bounds for large values of $|\tau|$ on $S_{d}$. We first carry out some preliminary computations,
namely we need to expand
\begin{multline}
\mathrm{Re}( a_{l}(m) (r^{k}e^{\sqrt{-1}k \theta} - 1) ) \\
= r^{k} \left( \log |\frac{Q(im)}{R_{D}(im)}| \cos(k \theta)
- (\mathrm{arg}( \frac{Q(im)}{R_{D}(im)} ) + 2l\pi) \sin(k \theta) \right) - \log |\frac{Q(im)}{R_{D}(im)}| \label{expand_Re_alm_exp}
\end{multline}
We assume that the segment $I_{S_{d}}$ is close enough to 0 in a way that we can find a constant $\Delta_{1}>0$ submitted to the next
inequality
\begin{equation}
\log |\frac{Q(im)}{R_{D}(im)}| \cos(k \theta)
- (\mathrm{arg}( \frac{Q(im)}{R_{D}(im)} ) + 2l\pi) \sin(k \theta) \geq \Delta_{1} \label{low_bds_QRD_cos_sin}
\end{equation}
for all $m \in \mathbb{R}$, all $\theta \in I_{S_{d}}$. Besides, according to the inclusion (\ref{quotient_Q_RD_in_SQRD}), we notice that
\begin{equation}
0< \log(r_{Q,R_{D},1}) \leq \log |\frac{Q(im)}{R_{D}(im)}| \leq \log(r_{Q,R_{D},2}) \label{bds_log_QRD}
\end{equation}
holds for all $m \in \mathbb{R}$. As a result, collecting (\ref{expand_Re_alm_exp}), (\ref{low_bds_QRD_cos_sin}) and
(\ref{bds_log_QRD}) yields the lower bounds
\begin{equation}
\mathrm{Re}( a_{l}(m) (r^{k}e^{\sqrt{-1}k \theta} - 1) ) \geq \Delta_{1}r^{k} - \log( r_{Q,R_{D},2} ) \label{low_bds_Re_alm_exp}
\end{equation}
for all $r \geq 0$, all $\theta \in I_{S_{d}}$, all $m \in \mathbb{R}$.

Departing from the factorization (\ref{factor_H_Q_exp}) we get the next estimates from below
\begin{multline}
|H(\tau,m)| \geq |Q(im)| \left| 1 - |\exp \left( a_{l}(m) (r^{k}e^{\sqrt{-1} k \theta} - 1) \right)| \right| \\
=
|Q(im)| \left| 1 - \exp \left( \mathrm{Re}( a_{l}(m)(r^{k} e^{\sqrt{-1} k \theta} - 1) ) \right) \right|\\
= |Q(im)| \exp \left( \mathrm{Re}( a_{l}(m)(r^{k} e^{\sqrt{-1} k \theta} - 1) ) \right) \left| 1 -
\exp \left( -\mathrm{Re}( a_{l}(m)(r^{k} e^{\sqrt{-1} k \theta} - 1) ) \right) \right| \label{low_bds_Htaum_factor}
\end{multline}
for all $r \geq 0$, all $\theta \in I_{S_{d}}$, all $m \in \mathbb{R}$. We select a real number $r_{1}>0$ large enough such that
\begin{equation}
\exp( -(\Delta_{1}r^{k} - \log( r_{Q,R_{D},2} ) ) ) \leq 1/2 \label{exp_Delta1_QRD_small} 
\end{equation}
for all $r \geq r_{1}$. Under this last constraint (\ref{exp_Delta1_QRD_small}), we deduce from (\ref{low_bds_Re_alm_exp}) and
(\ref{low_bds_Htaum_factor}) that
$$
|H(\tau,m)| \geq \frac{1}{2}|Q(im)| \exp \left( \Delta_{1}r^{k} - \log(r_{Q,R_{D},2}) \right)
$$
for all $r \geq r_{1}$, all $\theta \in I_{S_{d}}$, all $m \in \mathbb{R}$. Now, in view of the decomposition (\ref{factor_tau}), we get in
particular that $|\tau|=r |\tau_{l}|$. Consequently, we see that
\begin{equation}
|H(\tau,m)| \geq \frac{1}{2}|Q(im)| \exp \left( \frac{\Delta_{1}}{|\tau_{l}|^{k}} |\tau|^{k} - \log(r_{Q,R_{D},2}) \right)
\label{low_bds_Htaum_large_tau}
\end{equation}
for all $\tau \in S_{d}$ with $|\tau| \geq r_{1}|\tau_{l}|$.

As a result, gathering (\ref{low_bds_Htaum_first}) and (\ref{low_bds_Htaum_large_tau}), together with the shape of $a_{l}(m)$ and
$\tau_{l}$ given in (\ref{defin_alm}), (\ref{defin_taul}), we obtain two constants $A_{H,d},B_{H,d}>0$ depending on
$k,S_{Q,R_{D}},S_{d}$ for which
\begin{equation}
|H(\tau,m)| \geq A_{H,d}|Q(im)|\exp(B_{H,d} \alpha_{D}|\tau|^{k}) \label{low_bds_Htaum_on_sector}
\end{equation}
for all $\tau \in S_{d}$, all $m \in \mathbb{R}$.\medskip

{\bf 2)} In a second step, we display lower bounds when $\tau$ belongs to the cut disc $D(0,\rho) \setminus L_{-}$ where
$L_{-} = (-\rho,0]$. Let $\tau = r e^{\sqrt{-1} \theta}$ for some $\theta \in (-\pi,\pi)$ and $0 < r < \rho$.
Let $l \in \mathbb{Z}$. We first compute the
real part
$$
\mathrm{Re}( \alpha_{D}kr^{k} e^{\sqrt{-1} k \theta} - a_{l}(m) ) = \alpha_{D}kr^{k}\cos(k \theta) - \log |\frac{Q(im)}{R_{D}(im)}| 
$$
for all $m \in \mathbb{R}$. Therefore, owing to (\ref{bds_log_QRD}), we may select $r_{Q,R_{D},1}>0$ large enough such that
\begin{equation}
\exp \left( \mathrm{Re}( \alpha_{D}kr^{k} e^{\sqrt{-1} k \theta} - a_{l}(m) ) \right) \leq \exp \left( \alpha_{D}k \rho^{k}
- \log( r_{Q,R_{D},1} ) \right) \leq \frac{1}{2}   
\end{equation}
for all $\theta \in (-\pi,\pi)$, $0 < r < \rho$ and $m \in \mathbb{Z}$. Hence, thanks to the factorization (\ref{factor_H_Q_exp})
it follows that
\begin{multline}
|H(\tau,m)| \geq |Q(im)| \left| 1 - | \exp \left( \alpha_{D}k \tau^{k} - a_{l}(m) \right)| \right|
\\
= |Q(im)| \left| 1 - \exp \left( \mathrm{Re}( \alpha_{D}kr^{k} e^{\sqrt{-1} k \theta} - a_{l}(m) ) \right) \right|
\geq \frac{1}{2} |Q(im)| \label{low_bds_Htaum_on_cdisc}
\end{multline}
for all $\tau \in D(0,\rho) \setminus L_{-}$, all $m \in \mathbb{R}$.\medskip

In the next proposition we provide sufficient conditions for which the equation (\ref{main_integral_eq_w}) possesses a solution
$w^{d}(\tau,m,\epsilon)$ within the Banach space $F^{d}_{(\nu,\beta,\mu,k,\rho)}$.

\begin{prop}
We make the next additional assumptions
\begin{equation}
B_{H,d} \alpha_{D} > \kappa_{l}K_{1} \frac{k}{\Gamma( \frac{1}{k} - 1)}  \label{cond_Bhd_alphaDK1}
\end{equation}
for all $1 \leq l \leq D-1$, where $K_{1}$ is a constant depending on $k,\nu$ defined in Proposition 2 1) and $B_{H,d}$ is selected in
(\ref{low_bds_Htaum_on_sector}). Under the condition that the moduli $|c_{12}|$,$|c_{f}|$ and $|c_{l}|$ for $1 \leq l \leq D-1$ are chosen
small enough, we can find a constant $\varpi>0$ for which the equation
(\ref{main_integral_eq_w}) has a unique solution $w^{d}(\tau,m,\epsilon)$ in the space
$F^{d}_{(\nu,\beta,\mu,k,\rho)}$ controlled in norm in a way that
$||w^{d}(\tau,m,\epsilon)||_{(\nu,\beta,\mu,k,\rho)} \leq \varpi$ for all $\epsilon \in D(0,\epsilon_{0})$, where $\beta,\mu>0$ are chosen as
in (\ref{norm_beta_mu_F_n_a_ln}), $\nu > 1$ is taken as in Lemma 1, the sector $S_{d}$ and the disc $D(0,\rho)$ are suitably selected in a way
that
$\tau_{l} \notin S_{d} \cup D(0,\rho)$ for all $l \in \mathbb{Z}$ where $\tau_{l}$ is displayed by (\ref{defin_taul}) as described above.
\end{prop}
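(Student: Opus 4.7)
The plan is to recast equation (\ref{main_integral_eq_w}) as a fixed point problem. Dividing formally by $H(\tau,m) = Q(im) - \exp(\alpha_D k \tau^k) R_D(im)$, we define an operator $\mathcal{H}_\epsilon$ acting on $w \in F^d_{(\nu,\beta,\mu,k,\rho)}$ by taking the right-hand side (minus the $R_D(im)\exp(\alpha_D k \tau^k) w$ term) and dividing by $H(\tau,m)$. Then (\ref{main_integral_eq_w}) is equivalent to
\begin{equation*}
w = \mathcal{H}_\epsilon(w).
\end{equation*}
The goal is to show that for $|c_{12}|, |c_f|, |c_l|$ sufficiently small, $\mathcal{H}_\epsilon$ is a contractive self-map on the closed ball $\overline{B}(0,\varpi)\subset F^d_{(\nu,\beta,\mu,k,\rho)}$ for some well-chosen $\varpi>0$, so that Banach's fixed point theorem yields a unique solution $w^d$.

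The first step is to translate the lower bounds (\ref{low_bds_Htaum_on_sector}) and (\ref{low_bds_Htaum_on_cdisc}) into a pointwise bound
\begin{equation*}
\frac{1}{|H(\tau,m)|} \leq \frac{C_H}{|Q(im)|} \exp\bigl(-B_{H,d}\alpha_D |\tau|^k\bigr)
\end{equation*}
on $S_d$, and a uniform bound $\leq 2/|Q(im)|$ on $D(0,\rho)\setminus L_-$. Since $\deg Q = \deg R_D \geq \deg Q_1, \deg Q_2$ with $Q(im)\neq 0$, the factor $1/|Q(im)|$ behaves like $(1+|m|)^{-\deg Q}$, which is precisely the type of weight required in the norm estimates of Propositions 1 and 3.

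Next, to control the linear summands corresponding to index $l$, I would first apply Proposition 2 to the function $(\exp(-\kappa_l \mathcal{C}_k)w)(s^{1/k},m,\epsilon)$, producing an exponential growth factor $\exp\bigl((\nu + \kappa_l K_1 \tfrac{k}{\Gamma(1/k-1)})|\tau|^k\bigr)$; then apply Proposition 1 with $\gamma_2 = \tfrac{n+d_{l,k}}{k} + \delta_l - p - 1$ and $\gamma_3 = p$ (similarly for the principal term with $p = \delta_l$) to bound the convolution $\mathcal{C}_{k,\gamma_2,\gamma_3}$. The resulting exponential growth, after multiplication by $1/|H|$, is $\exp\bigl((\nu + \kappa_l K_1 \tfrac{k}{\Gamma(1/k-1)} - B_{H,d}\alpha_D)|\tau|^k\bigr)$, which stays uniformly bounded by virtue of the hypothesis (\ref{cond_Bhd_alphaDK1}). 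The nonnegative exponent $\Delta_l - d_l + \delta_l \geq 0$ from (\ref{cond_dl_deltal_Deltal}) together with $|c_l|$ small gives smallness in front of each linear summand. For the nonlinear term I would apply Proposition 3 (with $R$ replaced by $Q$, whose requisite bounds hold), combined again with the bound on $1/|H|$ to absorb the $1/R$ factor. For the forcing term $c_f \psi$, Lemma 1 supplies the required norm bound.

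Summing these estimates, I obtain $\|\mathcal{H}_\epsilon(w)\|_{(\nu,\beta,\mu,k,\rho)} \leq A_1(\varpi) + A_2(c_f)$, where $A_1(\varpi) \to 0$ with $\max(|c_l|,|c_{12}|) \to 0$ and $A_2(c_f) \to 0$ with $|c_f| \to 0$. Choosing $\varpi$ and the moduli accordingly ensures stability of $\overline{B}(0,\varpi)$. Contractivity is established analogously, expanding the bilinear term as $Q_1(\partial)(w_1-w_2) Q_2(\partial)w_1 + Q_1(\partial)w_2 Q_2(\partial)(w_1-w_2)$ and using linearity of $\exp(-\kappa_l \mathcal{C}_k)$ and of $\mathcal{C}_{k,\gamma_2,\gamma_3}$ in Propositions 1 and 2; smallness of the moduli yields a Lipschitz constant $\leq 1/2$. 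The main obstacle is the first step: the balance in (\ref{cond_Bhd_alphaDK1}) is delicate since it requires the exponential amplification of order $k$ produced by the infinite-order Moebius operator $\exp(-\kappa_l \mathcal{C}_k)$ (quantified via the Wiman function asymptotics in Proposition 2) to be strictly dominated by the exponential decay extracted from $1/H$ through the careful sectorial analysis of the forbidden directions $\Theta_{Q,R_D}$. Once this is ensured, all other estimates reduce to polynomial-type bounds handled by Propositions 1 and 3.
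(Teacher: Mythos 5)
Your proposal follows essentially the same fixed-point strategy as the paper's proof of Proposition 4 (carried out through Lemma 3): recast equation (\ref{main_integral_eq_w}) by dividing through $H(\tau,m)$, use the lower bounds (\ref{low_bds_Htaum_on_sector}) and (\ref{low_bds_Htaum_on_cdisc}) together with Propositions 1 and 2 and the balance hypothesis (\ref{cond_Bhd_alphaDK1}) to control the linear summands, invoke Proposition 3 for the bilinear term and Lemma 1 for the forcing term, establish both the self-map inclusion and the $1/2$-Lipschitz estimate on a ball $\bar{B}(0,\varpi)$, and conclude via the contractive mapping theorem. One minor indexing slip: since the kernel carries $s^{p}\,\frac{ds}{s}=s^{p-1}\,ds$, the paper applies Proposition 1 with $\gamma_{3}=p-1$ (and $\gamma_{3}=\delta_{l}-1$ for the principal term), not $\gamma_{3}=p$, but this does not change the structure of the argument.
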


\begin{proof}
We initiate the proof with a lemma that introduces a map related to (\ref{main_integral_eq_w}) and describes some of its properties that will
allow us to apply a fixed point theorem for it.

\begin{lemma} One can sort the moduli $|c_{12}|$,$|c_{f}|$ and $|c_{l}|$ for $1 \leq l \leq D-1$ tiny in size for which a constant
$\varpi>0$ can be picked up in a way
that the map $\mathcal{H}_{\epsilon}$ defined as
\begin{multline}
\mathcal{H}_{\epsilon}(w(\tau,m)) := 
\sum_{l=1}^{D-1} \epsilon^{\Delta_{l} - d_{l} + \delta_{l}}R_{l}(im) c_{l}\sum_{n \in I_{l}} A_{l,n}(\epsilon)\\
\times
\left( \frac{\tau^{k}}{H(\tau,m)\Gamma( \frac{n + d_{l,k}}{k} )}
\int_{0}^{\tau^{k}} (\tau^{k} - s)^{\frac{n+d_{l,k}}{k}-1} k^{\delta_{l}} s^{\delta_{l}}
\left( \exp(-\kappa_{l}\mathcal{C}_{k})w \right)(s^{1/k},m) \frac{ds}{s} \right. \\
+ \sum_{1 \leq p \leq \delta_{l}-1} A_{\delta_{l},p} \frac{\tau^{k}}{H(\tau,m)\Gamma( \frac{n + d_{l,k}}{k} + \delta_{l} -p)}
\int_{0}^{\tau^{k}} (\tau^{k}-s)^{\frac{n+d_{l,k}}{k} + \delta_{l}-p-1}\\
\left. \times k^{p}s^{p}
(\exp( -\kappa_{l} \mathcal{C}_{k})w)(s^{1/k},m) \frac{ds}{s} \right)\\
+ 
c_{12}\frac{\tau^k}{(2\pi)^{1/2}H(\tau,m)} \int_{0}^{\tau^{k}} \int_{-\infty}^{+\infty}
Q_{1}(i(m-m_{1}))w((\tau^{k}-s)^{1/k},m-m_{1})\\
 \times  Q_{2}(im_{1})
w(s^{1/k},m_{1}) \frac{1}{(\tau^{k}-s)s} dsdm_{1}
+ c_{f}\frac{\psi(\tau,m,\epsilon)}{H(\tau,m)}
\end{multline}
fulfills the next features:\medskip

\noindent 1) The next inclusion
\begin{equation}
\mathcal{H}_{\epsilon}( \bar{B}(0,\varpi) ) \subset \bar{B}(0,\varpi) \label{H_epsilon_inclusion}
\end{equation}
holds where $\bar{B}(0,\varpi)$ represents the closed ball of radius $\varpi>0$ centered at 0 in
$F^{d}_{(\nu,\beta,\mu,k,\rho)}$ for all $\epsilon \in D(0,\epsilon_{0})$.\medskip

\noindent 2) The shrinking condition
\begin{equation}
|| \mathcal{H}_{\epsilon}(w_{2}) - \mathcal{H}_{\epsilon}(w_{1}) ||_{(\nu,\beta,\mu,k,\rho)} \leq \frac{1}{2}
||w_{2} - w_{1}||_{(\nu,\beta,\mu,k,\rho)} \label{H_epsilon_shrink}
\end{equation}
occurs whenever $w_{1},w_{2} \in \bar{B}(0,\varpi)$, for all $\epsilon \in D(0,\epsilon_{0})$.
\end{lemma}
\begin{proof} Foremost, we focus on the first property (\ref{H_epsilon_inclusion}). Let $w(\tau,m)$ belonging to
$F^{d}_{(\nu,\beta,\mu,k,\rho)}$. We take $\epsilon \in D(0,\epsilon_{0})$ and set $\varpi>0$ such that
$||w(\tau,m)||_{(\nu,\beta,\mu,k,\rho)} \leq \varpi$. In particular, we notice that the next estimates
\begin{equation}
|w(\tau,m)| \leq \varpi |\tau|^{k} e^{\nu |\tau|^{k}} e^{-\beta |m|} (1 + |m|)^{-\mu} \label{bds_w}
\end{equation}
hold for all $\tau \in S_{d} \cup (D(0,\rho) \setminus L_{-})$. As a consequence of Proposition 2, we get that
$(\tau,m) \mapsto \exp( -\kappa_{l} \mathcal{C}_{k} )(w)(\tau,m)$ defines a continuous function on $S_{d} \times \mathbb{R}$,
holomorphic w.r.t $\tau$ on $S_{d}$ and a constant $K_{1}>0$ (depending on $k,\nu$) can be found such that
\begin{equation}
|(\exp( -\kappa_{l} \mathcal{C}_{k} )w)(\tau,m)| \leq \varpi |\tau|^{k}
\exp( \kappa_{l}K_{1} \frac{k}{\Gamma( \frac{1}{k} - 1)} |\tau|^{k} ) \exp( \nu |\tau|^{k}) (1 + |m|)^{-\mu}
e^{-\beta |m|} \label{bds_exp_kappaCk_Sd_w}
\end{equation}
for all $\tau \in S_{d}$, all $m \in \mathbb{R}$. Furthermore, the application of Proposition 1 for
$\gamma_{2} = \frac{n + d_{l,k}}{k} - 1$, $\gamma_{3} = \delta_{l}-1$ with $n \in I_{l}$ grants a constant
$C_{4}>0$ (depending on $I_{l}$,$k$,$\kappa_{l}$,$d_{l}$,$\delta_{l}$,$\nu$) with
\begin{multline*}
|\tau^{k}\int_{0}^{\tau^k} (\tau^{k}-s)^{\frac{n + d_{l,k}}{k}-1} s^{\delta_l} (\exp( -\kappa_{l} \mathcal{C}_{k} )w)(s^{1/k},m) \frac{ds}{s}|
\\
\leq \varpi C_{4} |\tau|^{2k + k(\delta_{l}-1)}
\exp( \kappa_{l}K_{1} \frac{k}{\Gamma( \frac{1}{k} - 1)} |\tau|^{k} ) \exp( \nu |\tau|^{k}) (1 + |m|)^{-\mu} e^{-\beta |m|}
\end{multline*}
provided that $\tau \in S_{d}$, $m \in \mathbb{R}$. Looking back to the lower bounds
(\ref{low_bds_Htaum_on_sector}) and having a glance at the constraints (\ref{cond_Bhd_alphaDK1})
allows us to reach the estimates
\begin{multline}
\left| \frac{R_{l}(im) \tau^{k}}{H(\tau,m)} \int_{0}^{\tau^{k}} (\tau^{k} - s)^{\frac{n + d_{l,k}}{k} - 1} s^{\delta_l}
(\exp( - \kappa_{l} \mathcal{C}_{k} )w)(s^{1/k},m) \frac{ds}{s} \right| \\
\leq \varpi \frac{C_{4}}{A_{H,d}}
\left|\frac{R_{l}(im)}{Q(im)} \right| |\tau|^{k(\delta_{l}+1)}
\exp \left( (\kappa_{l}K_{1}\frac{k}{\Gamma(\frac{1}{k}-1)} - B_{H,d} \alpha_{D})|\tau|^{k} \right)\\
\times
\exp( \nu |\tau|^{k} ) (1 + |m|)^{-\mu} e^{-\beta |m|} \leq \varpi \frac{C_{4}}{A_{H,d}}
\sup_{m \in \mathbb{R}} \left| \frac{R_{l}(im)}{Q(im)} \right|
\left( \sup_{|\tau| \geq 0} |\tau|^{k \delta_{l}} (1 + |\tau|^{2k}) \right. \\
\left. \times 
\exp ( (\kappa_{l}K_{1}\frac{k}{\Gamma(\frac{1}{k}-1)} - B_{H,d} \alpha_{D})|\tau|^{k}) \right) \times
\frac{|\tau|^{k}}{1 + |\tau|^{2k}} \exp( \nu |\tau|^{k} ) (1 + |m|)^{-\mu} \exp( -\beta |m| ) \label{bds_op1_sector_inclusion_Hepsilon}
\end{multline}
whenever $\tau \in S_{d}$, $m \in \mathbb{R}$.

On the other hand, Proposition 2 guarantees that for all $m \in \mathbb{R}$, the function \\
$\tau \mapsto (\exp( -\kappa_{l} \mathcal{C}_{k} )w)(\tau,m)$ extends analytically on $D(0,\rho) \setminus L_{-}$ with the bounds
\begin{equation}
|(\exp( -\kappa_{l} \mathcal{C}_{k} )w)(\tau,m)| \leq \exp( \frac{\kappa_{l} k \rho}{\Gamma( \frac{1}{k} + 1)} )
\varpi \exp( \nu \rho^{k} ) |\tau|^{k} (1 + |m|)^{-\mu} e^{-\beta |m|} \label{bds_exp_kappaCk_origin_w}
\end{equation}
for all $\tau \in D(0,\rho) \setminus L_{-}$, all $m \in \mathbb{R}$. As a consequence, Proposition 1 specialized for
$\gamma_{2} = \frac{n + d_{l,k}}{k} - 1$, $\gamma_{3} = \delta_{l}-1$ with $n \in I_{l}$ gives raise to a constant
$C_{4}'>0$ (depending on $I_{l}$,$k$,$\kappa_{l}$,$d_{l}$,$\delta_{l}$,$\nu$,$\rho$) for which
$$
|\tau^{k}\int_{0}^{\tau^k} (\tau^{k}-s)^{\frac{n + d_{l,k}}{k}-1} s^{\delta_l} (\exp( -\kappa_{l} \mathcal{C}_{k} )w)(s^{1/k},m) \frac{ds}{s}|
\leq \varpi C_{4}' |\tau|^{k} (1 + |m|)^{-\mu} e^{-\beta |m|}
$$
provided that $\tau \in D(0,\rho) \setminus L_{-}$, $m \in \mathbb{R}$. Keeping in mind the lower bounds
(\ref{low_bds_Htaum_on_cdisc}) we notice that
\begin{multline}
\left| \frac{R_{l}(im) \tau^{k}}{H(\tau,m)} \int_{0}^{\tau^{k}} (\tau^{k} - s)^{\frac{n + d_{l,k}}{k} - 1} s^{\delta_l}
(\exp( - \kappa_{l} \mathcal{C}_{k} )w)(s^{1/k},m) \frac{ds}{s} \right| \\
\leq 2 \varpi C_{4}' \left| \frac{R_{l}(im)}{Q(im)} \right| |\tau|^{k} (1 + |m|)^{-\mu} e^{-\beta |m|}
\leq 2 \varpi C_{4}' \sup_{m \in \mathbb{R}} \left| \frac{R_{l}(im)}{Q(im)} \right| (1 + \rho^{2k})
\frac{|\tau|^{k}}{1 + |\tau|^{2k}} \exp( \nu |\tau|^{k} )\\
\times (1 + |m|)^{-\mu} \exp( -\beta |m| ) \label{bds_op1_cdisc_inclusion_Hepsilon}
\end{multline}
for all $\tau \in D(0,\rho) \setminus L_{-}$, all $m \in \mathbb{R}$.

By clustering (\ref{bds_op1_sector_inclusion_Hepsilon}) and (\ref{bds_op1_cdisc_inclusion_Hepsilon}), we conclude that there exists a
constant $C_{5}>0$ (depending on $I_{l},k,\kappa_{l},d_{l},\delta_{l},\nu,\rho,S_{Q,R_{D}},S_{d},R_{l},Q$) with
\begin{equation}
|| \frac{R_{l}(im) \tau^{k}}{H(\tau,m)} \int_{0}^{\tau^{k}} (\tau^{k} - s)^{\frac{n + d_{l,k}}{k} - 1} s^{\delta_l}
(\exp( - \kappa_{l} \mathcal{C}_{k} )w)(s^{1/k},m) \frac{ds}{s} ||_{(\nu,\beta,\mu,k,\rho)}
\leq C_{5} \varpi. \label{norm_bds_conv_exp_op_1}
\end{equation}
Keeping in view the bounds (\ref{bds_exp_kappaCk_Sd_w}), an application of Proposition 1 for
$$ \gamma_{2} = \frac{n + d_{l,k}}{k} + \delta_{l} -p - 1 \ \ , \ \ \gamma_{3} = p-1 $$
where $n \in I_{l}$, with $1 \leq p \leq \delta_{l}-1$ yields a constant $C_{6}>0$
(depending on $I_{l}$,$k$,$\kappa_{l}$,$d_{l}$,$\delta_{l}$,$\nu$) with
\begin{multline*}
|\tau^{k}\int_{0}^{\tau^k} (\tau^{k}-s)^{\frac{n + d_{l,k}}{k} + \delta_{l} - p - 1} s^{p}
(\exp( -\kappa_{l} \mathcal{C}_{k} )w)(s^{1/k},m) \frac{ds}{s}|
\\
\leq \varpi C_{6} |\tau|^{2k + k(p-1)}
\exp( \kappa_{l}K_{1} \frac{k}{\Gamma( \frac{1}{k} - 1)} |\tau|^{k} ) \exp( \nu |\tau|^{k}) (1 + |m|)^{-\mu} e^{-\beta |m|}
\end{multline*}
for all $\tau \in S_{d}$, $m \in \mathbb{R}$ and $1 \leq p \leq \delta_{l}-1$.

Owing to the lower bounds (\ref{low_bds_Htaum_on_sector}) under the restriction (\ref{cond_Bhd_alphaDK1}), we deduce that
\begin{multline}
\left| \frac{R_{l}(im) \tau^{k}}{H(\tau,m)} \int_{0}^{\tau^{k}} (\tau^{k} - s)^{\frac{n + d_{l,k}}{k} + \delta_{l} -p-1} s^{p}
(\exp( - \kappa_{l} \mathcal{C}_{k} )w)(s^{1/k},m) \frac{ds}{s} \right| \\
\leq \varpi \frac{C_{6}}{A_{H,d}}
\left|\frac{R_{l}(im)}{Q(im)} \right| |\tau|^{k(p+1)}
\exp \left( (\kappa_{l}K_{1}\frac{k}{\Gamma(\frac{1}{k}-1)} - B_{H,d} \alpha_{D})|\tau|^{k} \right)\\
\times
\exp( \nu |\tau|^{k} ) (1 + |m|)^{-\mu} e^{-\beta |m|} \leq \varpi \frac{C_{6}}{A_{H,d}}
\sup_{m \in \mathbb{R}} \left| \frac{R_{l}(im)}{Q(im)} \right|
\left( \sup_{|\tau| \geq 0} |\tau|^{k p} (1 + |\tau|^{2k}) \right. \\
\left. \times 
\exp ( (\kappa_{l}K_{1}\frac{k}{\Gamma(\frac{1}{k}-1)} - B_{H,d} \alpha_{D})|\tau|^{k}) \right) \times
\frac{|\tau|^{k}}{1 + |\tau|^{2k}} \exp( \nu |\tau|^{k} ) (1 + |m|)^{-\mu} \exp( -\beta |m| ) \label{bds_op2_sector_inclusion_Hepsilon}
\end{multline}
whenever $\tau \in S_{d}$, $m \in \mathbb{R}$ with $1 \leq p \leq \delta_{l}-1$.

Using the bounds (\ref{bds_exp_kappaCk_origin_w}) and calling up Proposition 1 for the values
$$ \gamma_{2} = \frac{n + d_{l,k}}{k} + \delta_{l} -p - 1 \ \ , \ \ \gamma_{3} = p-1 $$
where $n \in I_{l}$, with $1 \leq p \leq \delta_{l}-1$, we obtain a constant $C_{6}'>0$ 
(depending on $I_{l}$,$k$,$\kappa_{l}$,$d_{l}$,$\delta_{l}$,$\nu$,$\rho$) for which
$$
|\tau^{k}\int_{0}^{\tau^k} (\tau^{k}-s)^{\frac{n + d_{l,k}}{k} + \delta_{l} - p -1}
s^{p} (\exp( -\kappa_{l} \mathcal{C}_{k} )w)(s^{1/k},m) \frac{ds}{s}|
\leq \varpi C_{6}' |\tau|^{k} (1 + |m|)^{-\mu} e^{-\beta |m|}
$$
for $\tau \in D(0,\rho) \setminus L_{-}$, $m \in \mathbb{R}$. With the help of the lower bounds
(\ref{low_bds_Htaum_on_cdisc}), we deduce
\begin{multline}
\left| \frac{R_{l}(im) \tau^{k}}{H(\tau,m)} \int_{0}^{\tau^{k}} (\tau^{k} - s)^{\frac{n + d_{l,k}}{k} + \delta_{l} -p-1} s^{p}
(\exp( - \kappa_{l} \mathcal{C}_{k} )w)(s^{1/k},m) \frac{ds}{s} \right| \\
\leq 2 \varpi C_{6}' \left| \frac{R_{l}(im)}{Q(im)} \right| |\tau|^{k} (1 + |m|)^{-\mu} e^{-\beta |m|}
\leq 2 \varpi C_{6}' \sup_{m \in \mathbb{R}} \left| \frac{R_{l}(im)}{Q(im)} \right| (1 + \rho^{2k})
\frac{|\tau|^{k}}{1 + |\tau|^{2k}} \exp( \nu |\tau|^{k} )\\
\times (1 + |m|)^{-\mu} \exp( -\beta |m| ) \label{bds_op2_cdisc_inclusion_Hepsilon}
\end{multline}
provided that $\tau \in D(0,\rho) \setminus L_{-}$ and $m \in \mathbb{R}$.

By grouping (\ref{bds_op2_sector_inclusion_Hepsilon}) and (\ref{bds_op2_cdisc_inclusion_Hepsilon}), we deduce the existence of a
constant $C_{7}>0$ (depending on $I_{l},k,\kappa_{l},d_{l},\delta_{l},\nu,\rho,S_{Q,R_{D}},S_{d},R_{l},Q$) with
\begin{equation}
|| \frac{R_{l}(im) \tau^{k}}{H(\tau,m)} \int_{0}^{\tau^{k}} (\tau^{k} - s)^{\frac{n + d_{l,k}}{k} + \delta_{l} -p -1 } s^{p}
(\exp( - \kappa_{l} \mathcal{C}_{k} )w)(s^{1/k},m) \frac{ds}{s} ||_{(\nu,\beta,\mu,k,\rho)}
\leq C_{7} \varpi. \label{norm_bds_conv_exp_op_2}
\end{equation}

On the other hand, taking into account the assumption (\ref{constraints_degree_coeff}) and the lower bounds
(\ref{low_bds_Htaum_first}) together with (\ref{low_bds_Htaum_on_cdisc}), the application of Proposition 3 induces a constant
$C_{3}>0$ (depending on $Q_{1},Q_{2},Q,\mu,k,\nu$) and a constant $\eta_{2}>0$ (equal to $\eta_{2,l}$ from (\ref{low_bds_Htaum_first}))
for which
\begin{multline}
|| \frac{\tau^k}{H(\tau,m)} \int_{0}^{\tau^{k}} \int_{-\infty}^{+\infty}
Q_{1}(i(m-m_{1}))w((\tau^{k}-s)^{1/k},m-m_{1})\\
 \times  Q_{2}(im_{1})
w(s^{1/k},m_{1}) \frac{1}{(\tau^{k}-s)s} dsdm_{1}||_{(\nu,\beta,\mu,k,\rho)}
\leq \sup_{\tau \in S_{d} \cup (D(0,\rho) \setminus L_{-}), m \in \mathbb{R}}
\left| \frac{Q(im)}{H(\tau,m)} \right|\\
\times || \frac{\tau^{k}}{Q(im)} \int_{0}^{\tau^{k}} \int_{-\infty}^{+\infty}
Q_{1}(i(m-m_{1}))w((\tau^{k}-s)^{1/k},m-m_{1})\\
 \times  Q_{2}(im_{1})
w(s^{1/k},m_{1}) \frac{1}{(\tau^{k}-s)s} dsdm_{1}||_{(\nu,\beta,\mu,k,\rho)} \leq
\frac{C_{3}}{\min( \eta_{2} , 1/2)} ||w(\tau,m)||^{2}_{(\nu,\beta,\mu,k,\rho)}\\
\leq \frac{C_{3} \varpi^{2}}{\min( \eta_{2} , 1/2)}. \label{norm_bds_nonlinear_conv_op}
\end{multline}
Furthermore, owing to Lemma 1 and in view of the lower estimates (\ref{low_bds_Htaum_first}), (\ref{low_bds_Htaum_on_cdisc}), we obtain
a constant $K_{f}>0$ (depending on $k,\nu$ and $K_{0},T_{0}$ from (\ref{norm_beta_mu_F_n_a_ln})) and $\eta_{2}>0$ such that
\begin{equation}
|| \frac{\psi(\tau,m,\epsilon)}{H(\tau,m)} ||_{(\nu,\beta,\mu,k,\rho)} \leq \frac{K_{f}}{\min( \eta_{2}, 1/2)
\min_{m \in \mathbb{R}} |Q(im)|} \label{norm_bds_forcing_term}
\end{equation}
for all $\epsilon \in D(0,\epsilon_{0})$.

Now, we select $|c_{12}|,|c_{f}|$ with $|c_{l}|$, $1 \leq l \leq D-1$ small enough in a way that one can find a constant $\varpi>0$ with
\begin{multline}
\sum_{l=1}^{D-1} \epsilon_{0}^{\Delta_{l} - d_{l} + \delta_{l}} |c_{l}| \sum_{n \in I_{l}} \sup_{\epsilon \in D(0,\epsilon_{0})}
|A_{l,n}(\epsilon)| \left( \frac{C_{5} \varpi k^{\delta_l}}{\Gamma( \frac{n + d_{l,k}}{k} )} +
\sum_{1 \leq p \leq \delta_{l}-1} |A_{\delta_{l},p}| \frac{C_{7} \varpi k^{p}}{\Gamma( \frac{n + d_{l,k}}{k} + \delta_{l} - p)} \right)\\
+ |c_{12}| \frac{C_{3} \varpi^{2}}{(2\pi)^{1/2} \min( \eta_{2}, 1/2)}
+ |c_{f}| \frac{K_{f}}{\min( \eta_{2}, 1/2) \min_{m \in \mathbb{R}} |Q(im)|} \leq \varpi. \label{cond_incl_Hepsilon}
\end{multline}
Finally, if one collects the norms estimates (\ref{norm_bds_conv_exp_op_1}), (\ref{norm_bds_conv_exp_op_2}) in a row with 
(\ref{norm_bds_nonlinear_conv_op}) and (\ref{norm_bds_forcing_term}) under the restriction (\ref{cond_incl_Hepsilon}), on gets the
inclusion (\ref{H_epsilon_inclusion}).\medskip

In the next part of the proof, we turn to the second feature (\ref{H_epsilon_shrink}). Namely, let $w_{1}(\tau,m),w_{2}(\tau,m)$ belonging
to $\bar{B}(0,\varpi)$ inside $F_{(\nu,\beta,\mu,k,\rho)}^{d}$. From the very definition, we get in particular that the next bounds
$$
|w_{2}(\tau,m) - w_{1}(\tau,m)| \leq ||w_{2}(\tau,m) - w_{1}(\tau,m)||_{(\nu,\beta,\mu,k,\rho)}
|\tau|^{k} \exp( \nu |\tau|^{k} ) e^{-\beta |m|} (1 + |m|)^{-\mu}
$$
hold for all $\tau \in S_{d} \cup (D(0,\rho) \setminus L_{-})$. Following exactly the same steps as the sequence of inequalities
(\ref{bds_w}), (\ref{bds_exp_kappaCk_Sd_w}), (\ref{bds_op1_sector_inclusion_Hepsilon}),
(\ref{bds_exp_kappaCk_origin_w}), (\ref{bds_op1_cdisc_inclusion_Hepsilon}) and
(\ref{norm_bds_conv_exp_op_1}), we observe that
\begin{multline}
|| \frac{R_{l}(im) \tau^{k}}{H(\tau,m)} \int_{0}^{\tau^k} (\tau^{k} - s)^{\frac{n + d_{l,k}}{k} - 1}
s^{\delta_{l}} (\exp( -\kappa_{l} \mathcal{C})(w_{2} - w_{1}))(s^{1/k},m) \frac{ds}{s} ||_{(\nu,\beta,\mu,k,\rho)}\\
\leq C_{5}||w_{2}(\tau,m) - w_{1}(\tau,m)||_{(\nu,\beta,\mu,k,\rho)}
\label{norm_bds_conv_exp_op_1_shrink}
\end{multline}
for the constant $C_{5}>0$ appearing in (\ref{norm_bds_conv_exp_op_1}).

Similarly, tracking the progression (\ref{bds_w}), (\ref{bds_exp_kappaCk_Sd_w}), (\ref{bds_exp_kappaCk_origin_w}),
(\ref{bds_op2_sector_inclusion_Hepsilon}), (\ref{bds_op2_cdisc_inclusion_Hepsilon}) and (\ref{norm_bds_conv_exp_op_2})
yields the next bounds
\begin{multline}
|| \frac{R_{l}(im) \tau^{k}}{H(\tau,m)} \int_{0}^{\tau^{k}} (\tau^{k} - s)^{\frac{n + d_{l,k}}{k} + \delta_{l} -p -1 } s^{p}
(\exp( - \kappa_{l} \mathcal{C}_{k} )(w_{2}-w_{1}))(s^{1/k},m) \frac{ds}{s} ||_{(\nu,\beta,\mu,k,\rho)}\\
\leq C_{7} ||w_{2}(\tau,m) - w_{1}(\tau,m)||_{(\nu,\beta,\mu,k,\rho)} \label{norm_bds_conv_exp_op_2_shrink}
\end{multline}
for all $1 \leq p \leq \delta_{l}-1$, where the constant $C_{7}>0$ shows up in (\ref{norm_bds_conv_exp_op_2}).

In order to handle the nonlinear term, we need to present the next difference in prepared form
\begin{multline*}
Q_{1}(i(m-m_{1}))w_{2}( (\tau^{k}-s)^{1/k},m-m_{1}) Q_{2}(im_{1})w_{2}(s^{1/k},m_{1})\\
- Q_{1}(i(m-m_{1}))w_{1}((\tau^{k} - s)^{1/k},m-m_{1})Q_{2}(im_{1})w_{1}(s^{1/k},m_{1}) \\
=
Q_{1}(i(m-m_{1})) \left( w_{2}((\tau^{k} - s)^{1/k},m-m_{1}) - w_{1}((\tau^{k} - s)^{1/k},m-m_{1}) \right)
Q_{2}(im_{1})w_{2}(s^{1/k},m_{1})\\
+ Q_{1}(i(m-m_{1}))w_{1}((\tau^{k} - s)^{1/k},m-m_{1})Q_{2}(im_{1}) \left( w_{2}(s^{1/k},m_{1}) - w_{1}(s^{1/k},m_{1}) \right)
\end{multline*}
for all $\tau \in S_{d} \cup (D(0,\rho) \setminus L_{-})$, all $m,m_{1} \in \mathbb{R}$.

Then, in view of the assumption (\ref{constraints_degree_coeff}) and the lower bounds
(\ref{low_bds_Htaum_first}), (\ref{low_bds_Htaum_on_cdisc}), Proposition 3 gives raise to constants
$C_{3}>0$ and $\eta_{2}>0$ appearing in (\ref{norm_bds_nonlinear_conv_op}) for which
\begin{multline}
|| \frac{\tau^{k}}{H(\tau,m)} \int_{0}^{\tau^k} \int_{-\infty}^{+\infty}
\left \{ Q_{1}(i(m-m_{1}))w_{2}((\tau^{k} - s)^{1/k},m-m_{1})Q_{2}(im)w_{2}(s^{1/k},m_{1}) \right. \\
\left. -Q_{1}(i(m-m_{1}))w_{1}((\tau^{k}-s)^{1/k},m-m_{1})Q_{2}(im_{1})w_{1}(s^{1/k},m_{1}) \right \} \frac{1}{(\tau^{k}-s)s} ds dm_{1}
||_{(\nu,\beta,\mu,k,\rho)}\\
\leq \frac{C_{3}}{\min( \eta_{2}, 1/2)} \left( ||w_{2}(\tau,m)||_{(\nu,\beta,\mu,k,\rho)}
+ ||w_{1}(\tau,m)||_{(\nu,\beta,\mu,k,\rho)} \right) ||w_{2}(\tau,m) - w_{1}(\tau,m)||_{(\nu,\beta,\mu,k,\rho)}\\
\leq \frac{2 \varpi C_{3}}{\min( \eta_{2}, 1/2)} ||w_{2}(\tau,m) - w_{1}(\tau,m)||_{(\nu,\beta,\mu,k,\rho)}
\label{norm_bds_nonlinear_conv_op_shrink}
\end{multline}
Now, we restrict the constants $|c_{12}|$ and $|c_{l}|$, $1 \leq l \leq D-1$ in a way that one arrives at the next bounds
\begin{multline}
\sum_{l=1}^{D-1} \epsilon_{0}^{\Delta_{l} - d_{l} + \delta_{l}} |c_{l}| \sum_{n \in I_{l}} \sup_{\epsilon \in D(0,\epsilon_{0})}
|A_{l,n}(\epsilon)| \left( \frac{C_{5} k^{\delta_l}}{\Gamma( \frac{n + d_{l,k}}{k} )} +
\sum_{1 \leq p \leq \delta_{l}-1} |A_{\delta_{l},p}| \frac{C_{7} k^{p}}{\Gamma( \frac{n + d_{l,k}}{k} + \delta_{l} - p)} \right)\\
+ |c_{12}| \frac{2 \varpi C_{3}}{(2\pi)^{1/2} \min( \eta_{2}, 1/2)} \leq \frac{1}{2}. \label{cond_shrink_Hepsilon}
\end{multline}
At last, by assembling the estimates (\ref{norm_bds_conv_exp_op_1_shrink}), (\ref{norm_bds_conv_exp_op_2_shrink}) with
(\ref{norm_bds_nonlinear_conv_op_shrink}) submitted to the constraints (\ref{cond_shrink_Hepsilon}), one achieves the forcast
shrinking property (\ref{H_epsilon_shrink}).

Ultimately, we select $|c_{12}|$, $|c_{f}|$ and
$|c_{l}|$, $1 \leq l \leq D-1$ small enough in a way that (\ref{cond_incl_Hepsilon}) and (\ref{cond_shrink_Hepsilon}) are
simultaneously fulfilled. Lemma 3 follows.
\end{proof}
We turn back again to the proof of Proposition 4. For $\varpi>0$ chosen as in Lemma 3, we set the closed ball
$\bar{B}(0,\varpi) \subset F_{(\nu,\beta,\mu,k,\rho)}^{d}$ which represents a complete metric space for the distance
$d(x,y) = ||x - y||_{(\nu,\beta,\mu,k,\rho)}$. Owing to the lemma above, we observe that $\mathcal{H}_{\epsilon}$ induces a contractive
application from $(\bar{B}(0,\varpi),d)$ into itself. Then, according to the classical contractive mapping theorem, the map
$\mathcal{H}_{\epsilon}$ possesses a unique fixed point that we set as $w^{d}(\tau,m,\epsilon)$, meaning that
\begin{equation}
\mathcal{H}_{\epsilon}(w^{d}(\tau,m,\epsilon) ) = w^{d}(\tau,m,\epsilon), \label{fixed_pt_Hepsilon}
\end{equation}
that belongs to the ball $\bar{B}(0,\varpi)$, for all $\epsilon \in D(0,\epsilon_{0})$. Besides, the function $w^{d}(\tau,m,\epsilon)$
depends holomorphically on $\epsilon$ in $D(0,\epsilon_{0})$. Now, if one moves apart the term
$\exp( \alpha_{D} k \tau^{k} )R_{D}(im)w(\tau,m,\epsilon)$ from the right to the left handside of (\ref{main_integral_eq_w}), we observe
by dividing with the function $H(\tau,m)$ given in (\ref{defin_Htaum}), that (\ref{main_integral_eq_w}) can be exactly rewritten as
the equation (\ref{fixed_pt_Hepsilon}) above. As a result, the unique fixed point $w^{d}(\tau,m,\epsilon)$ of
$\mathcal{H}_{\epsilon}$ obtained overhead in $\bar{B}(0,\varpi)$ precisely solves the equation (\ref{main_integral_eq_w}).
\end{proof}

\section{Analytic solutions on sectors to the main initial value problem}

We turn back to the formal constructions realized in Section 3 by taking into consideration the solution of the related problem
(\ref{main_integral_eq_w}) built up in Section 5 within the Banach spaces described in Definition 4.

At the onset, we remind the reader the definition of a good covering in $\mathbb{C}^{\ast}$ and we disclose a modified version of
so-called associated sets of sectors as proposed in our previous work \cite{lama1}.

\begin{defin} Let $\varsigma \geq 2$ be an integer. For all
$0 \leq p \leq \varsigma - 1$, we set $\mathcal{E}_{p}$ as an open sector centered at 0, with radius $\epsilon_{0}>0$
such that $\mathcal{E}_{p} \cap \mathcal{E}_{p+1} \neq \emptyset$ for all
$0 \leq p \leq \varsigma-1$ (with the convention that $\mathcal{E}_{\varsigma} = \mathcal{E}_{0}$). Furthermore, we take for granted that the
intersection of any three different elements of $\{ \mathcal{E}_{p} \}_{0 \leq p \leq \varsigma - 1}$ is empty and that
$\cup_{p=0}^{\varsigma - 1} \mathcal{E}_{p} = \mathcal{U} \setminus \{ 0 \}$, where $\mathcal{U}$ stands for some neighborhood
of 0 in $\mathbb{C}$. A set of sector $\{ \mathcal{E}_{p} \}_{0 \leq p \leq \varsigma - 1}$ with the above properties is called a good covering
in $\mathbb{C}^{\ast}$.
\end{defin}

\begin{defin}
We consider a good covering $\underline{\mathcal{E}} = \{ \mathcal{E}_{p} \}_{0 \leq p \leq \varsigma - 1}$ in $\mathbb{C}^{\ast}$. We fix a
real number $\rho>0$
and an open sector $\mathcal{T}$ centered at 0 with bisecting direction $d=0$ and radius $r_{\mathcal{T}}>0$ and we set up a family of open
sectors
$$ S_{\mathfrak{d}_{p},\theta,\epsilon_{0}r_{\mathcal{T}}} = \{ T \in \mathbb{C}^{\ast}/ \ \ |T| < \epsilon_{0}r_{\mathcal{T}},
\ \ | \mathfrak{d}_{p} - \mathrm{arg}(T)| < \theta/2 \} $$
with aperture $\theta > \pi/k$ and $\mathfrak{d}_{p} \in [-\pi,\pi)$, $0 \leq p \leq \varsigma - 1$ represent their bisecting directions.
We say that the data $\{ \{S_{\mathfrak{d}_{p},\theta,\epsilon_{0}r_{\mathcal{T}}} \}_{0 \leq p \leq \varsigma -1},
\mathcal{T}, \rho \}$ are associated to $\underline{\mathcal{E}}$ if the next two constraints hold:\\
1) There exists a set of unbounded sectors $S_{\mathfrak{d}_{p}}$, $0 \leq p \leq \varsigma-1$ centered at 0 with suitably chosen
bisecting direction $\mathfrak{d}_{p} \in (- \pi/2,\pi/2 )$ and small aperture satisfying the property that
$$ \tau_{l} \notin S_{\mathfrak{d}_{p}} \cup D(0,\rho) $$
for some fixed radius $\rho>0$ and all $l \in \mathbb{Z}$ where $\tau_{l}$ stand for the complex numbers defined through
(\ref{defin_taul}).\\
2) For all $\epsilon \in \mathcal{E}_{p}$, all $t \in \mathcal{T}$,
\begin{equation}
\epsilon t \in S_{\mathfrak{d}_{p},\theta,\epsilon_{0}r_{\mathcal{T}}} \label{epsilon_t_in_sector}
\end{equation}
for all $0 \leq p \leq \varsigma - 1$.
\end{defin}

Figure~\ref{fig:c} shows a configuration of a good covering of three sectors, one of them of opening larger than $\pi/k$ for some $k$ close to 1. We illustrate in Figure~\ref{fig:d} a configuration of associated sectors.

\begin{figure}
	\centering
		\includegraphics[width=0.4\textwidth]{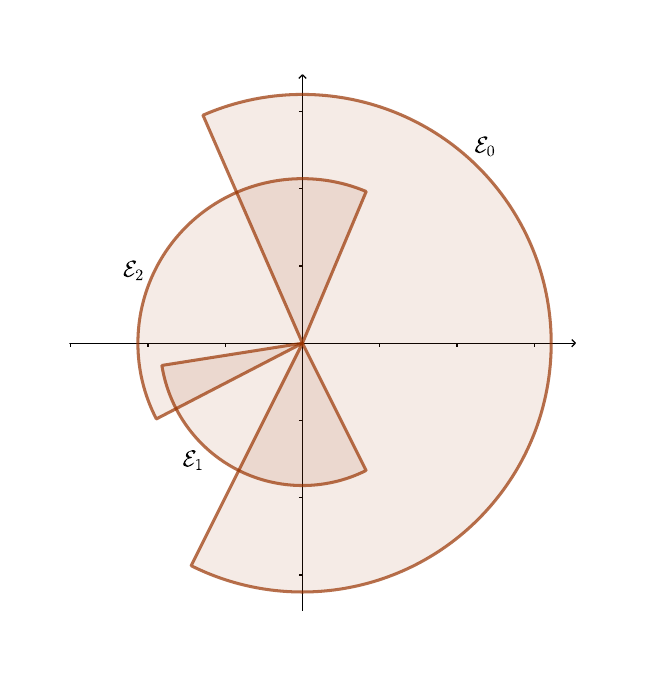}
				\caption{Good covering in $\mathbb{C}^\star$}
				\label{fig:c}
\end{figure}

\begin{figure}
	\centering
	\includegraphics[width=0.3\textwidth]{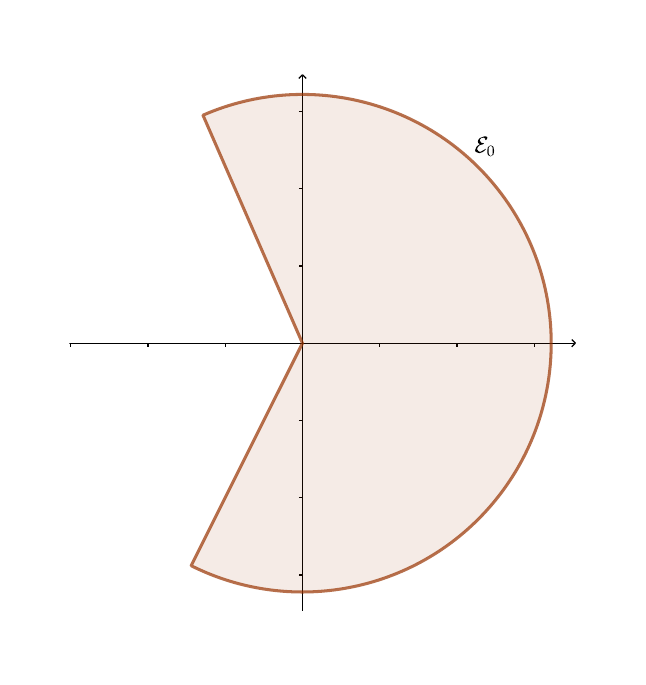} \vline\vline \includegraphics[width=0.3\textwidth]{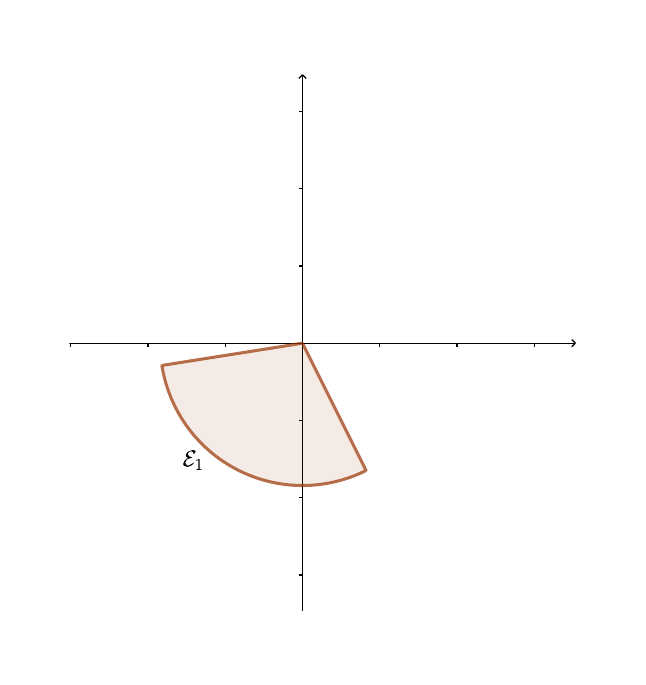} \vline\vline \includegraphics[width=0.3\textwidth]{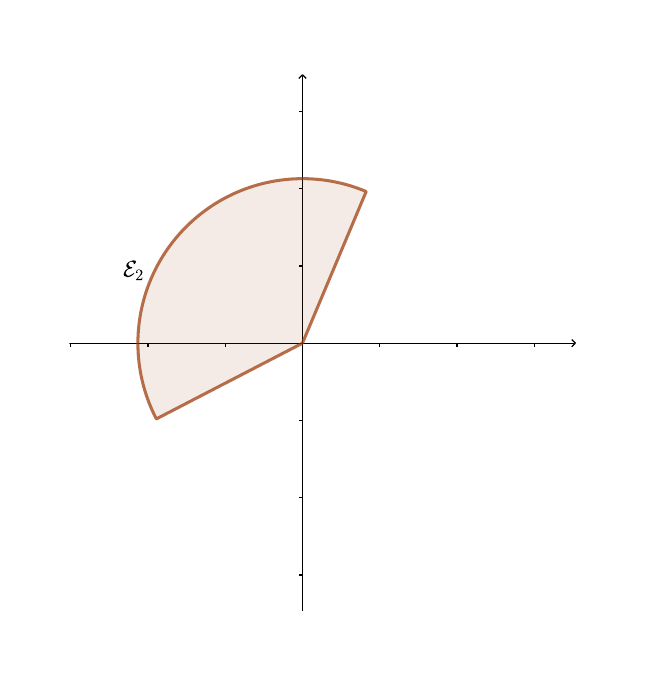}\\
	\includegraphics[width=0.3\textwidth]{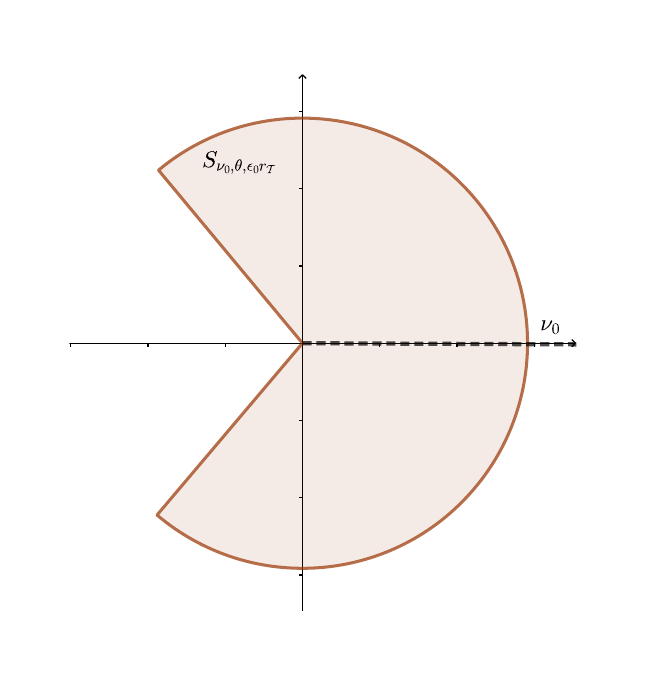} \vline\vline \includegraphics[width=0.3\textwidth]{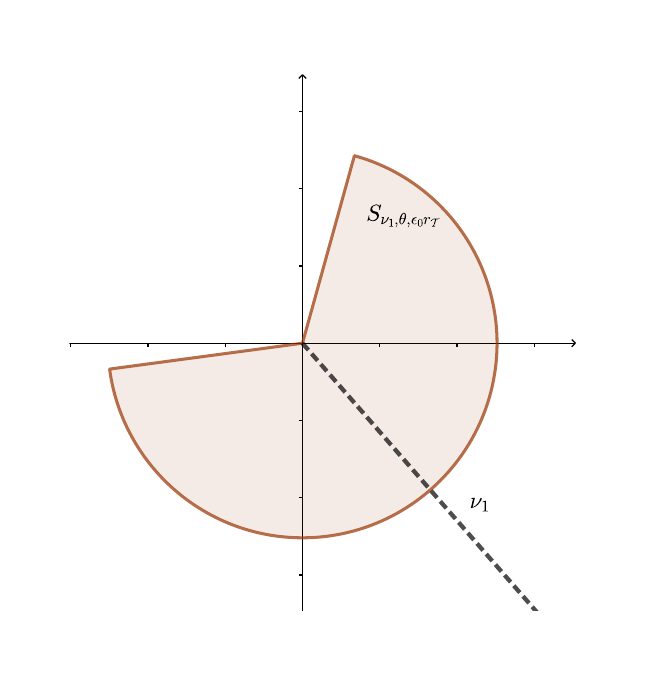} \vline\vline \includegraphics[width=0.3\textwidth]{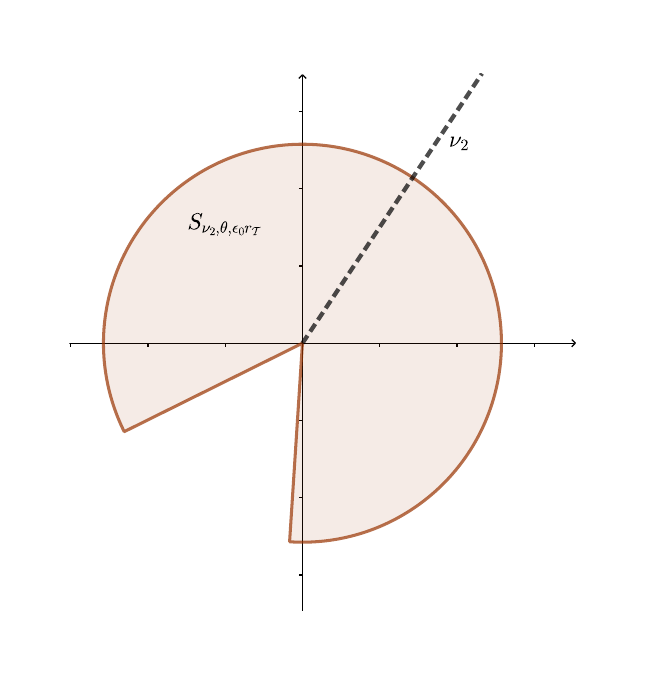}
	\caption{A configuration associated to the good covering in Figure~\ref{fig:c}}
	\label{fig:d}
\end{figure}

In the following first principal result of the work, we build up a set of actual holomorphic solutions to the main initial value problem
(\ref{main_ivp_u}) defined on the sectors $\mathcal{E}_{p}$ w.r.t $\epsilon$. We also provide an upper control for the difference between any two
neighboring solutions on $\mathcal{E}_{p} \cap \mathcal{E}_{p+1}$ that turn out to be at most exponentially flat of order $k$.

\begin{theo}
Let us assume that the constraints (\ref{defin_deltal}), (\ref{cond_dl_deltal_Deltal}), (\ref{constraints_degree_coeff}),
(\ref{norm_beta_mu_F_n_a_ln}) and (\ref{quotient_Q_RD_in_SQRD}) hold. We consider a good covering
$\underline{\mathcal{E}} = \{ \mathcal{E}_{p} \}_{0 \leq p \leq \varsigma - 1}$ for which a set of data
$\{ \{S_{\mathfrak{d}_{p},\theta,\epsilon_{0}r_{\mathcal{T}}} \}_{0 \leq p \leq \varsigma - 1}, \mathcal{T}, \rho \}$ associated to
$\underline{\mathcal{E}}$ can be singled out. We take for granted that the constants $\alpha_{D}$ and
$\kappa_{l}$, $1 \leq l \leq D-1$ appearing in the problem (\ref{main_ivp_u}) are submitted to the next inequalities
\begin{equation}
B_{H,\mathfrak{d}_{p}} \alpha_{D} > \kappa_{l} K_{1} \frac{k}{\Gamma(\frac{1}{k} - 1)}
\end{equation}
for all $0 \leq p \leq \varsigma - 1$, where $B_{H,\mathfrak{d}_{p}}$ is framed in the construction
(\ref{low_bds_Htaum_on_sector}) and depends on $k$,$S_{Q,R_{D}}$, $S_{\mathfrak{d}_{p}}$ and $K_{1}>0$ is a constant
relying on $k,\nu$ defined in Proposition 2 1).

Then, whenever the moduli $|c_{12}|$,$|c_{f}|$ and $|c_{l}|$, $1 \leq l \leq D-1$ are taken sufficiently small, a family
$\{u_{p}(t,z,\epsilon)\}_{0 \leq p \leq \varsigma - 1}$ of genuine solutions of (\ref{main_ivp_u}) can be established.
More precisely, each function $u_{p}(t,z,\epsilon)$ defines a bounded holomorphic function on the product
$(\mathcal{T} \cap D(0,\sigma)) \times H_{\beta'} \times \mathcal{E}_{p}$ for any given $0 < \beta' < \beta$ and suitably tiny $\sigma>0$
(where $\beta$ comes out in (\ref{norm_beta_mu_F_n_a_ln})) and can be expressed as a Laplace transform of order $k$ and
Fourier inverse transform
\begin{equation}
 u_{p}(t,z,\epsilon) = \frac{k}{(2\pi)^{1/2}} \int_{-\infty}^{+\infty} \int_{L_{\gamma_{p}}}
 w^{\mathfrak{d}_{p}}(u,m,\epsilon) \exp( -(\frac{u}{\epsilon t})^{k} ) e^{izm} \frac{du}{u} dm \label{Laplace_Fourier_up}
\end{equation}
along a halfline $L_{\gamma_{p}} = \mathbb{R}_{+}e^{\sqrt{-1}\gamma_{p}} \subset S_{\mathfrak{d}_{p}} \cup \{ 0 \}$ and where
$w^{\mathfrak{d}_{p}}(\tau,m,\epsilon)$ stands for a function that belongs to the Banach space
$F^{\mathfrak{d}_{p}}_{(\nu,\beta,\mu,k,\rho)}$ for all $\epsilon \in D(0,\epsilon_{0})$. Furthermore, one can choose constants
$K_{p},M_{p}>0$ and $0< \sigma' < \sigma$ (independent of $\epsilon$) with
\begin{equation}
\sup_{t \in \mathcal{T} \cap D(0,\sigma'),z \in H_{\beta'}} |u_{p+1}(t,z,\epsilon) -
u_{p}(t,z,\epsilon)| \leq K_{p} \exp( -\frac{M_{p}}{|\epsilon|^{k}} ) \label{exp_small_difference_u_p}
\end{equation}
for all $\epsilon \in \mathcal{E}_{p+1} \cap \mathcal{E}_{p}$, all $0 \leq p \leq \varsigma-1$ (owing to the convention that
$u_{\varsigma} = u_{0}$).
\end{theo}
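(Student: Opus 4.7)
The plan is to assemble the construction by running backwards through Section~3 and then to extract the exponential flatness from a standard Cauchy-deformation argument. First, for each bisecting direction $\mathfrak{d}_{p}$ selected in Definition~6, I would invoke Proposition~4 to produce a function $w^{\mathfrak{d}_{p}}(\tau,m,\epsilon)\in F^{\mathfrak{d}_{p}}_{(\nu,\beta,\mu,k,\rho)}$ solving the convolution equation (\ref{main_integral_eq_w}) with $\|w^{\mathfrak{d}_{p}}\|_{(\nu,\beta,\mu,k,\rho)}\leq\varpi$ uniformly in $\epsilon\in D(0,\epsilon_{0})$, and depending holomorphically on $\epsilon$. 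I would then define $u_{p}(t,z,\epsilon)$ by the integral (\ref{Laplace_Fourier_up}) and verify it defines a bounded holomorphic function on $(\mathcal{T}\cap D(0,\sigma))\times H_{\beta'}\times\mathcal{E}_{p}$ for small $\sigma>0$: the bound from the Banach norm gives the growth needed to apply Definition~1 (Laplace transform of order $k$), while the factor $(1+|m|)^{\mu}e^{-\beta|m|}$ ensures absolute convergence of the Fourier integral on any strip $H_{\beta'}$ with $\beta'<\beta$. Condition (\ref{epsilon_t_in_sector}) guarantees the required admissibility of the direction $\gamma_{p}$.

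Next, I would verify that $u_{p}$ solves (\ref{main_ivp_u}). Setting $U_{p}(T,z,\epsilon):=u_{p}(T/\epsilon,z,\epsilon)$, the rescaling (\ref{time_rescaled_solution}) reduces the task to showing that $U_{p}$ solves (\ref{main_ivp_U_prep_form}), which by the Tahara identity (\ref{Tahara_expand}) is equivalent to (\ref{main_ivp_U}). For this I would apply the formulas of Lemma~2 term by term to rewrite each action on the Laplace--Fourier integral representation of $U_{p}$: Lemma~2~1) handles powers of the infinite-order operator $\exp(\alpha_{D}T^{k+1}\partial_{T})$ after summing the series and controlling it using the exponential bound from Proposition~2; Lemma~2~2) transforms the monomial $T^{d_{l,k}}$; the formula (\ref{Moebius_Ugamma_int_rep}) takes care of the Moebius transform $T\mapsto T/(1+\kappa_{l}T)$; and Lemma~2~3) gives the nonlinear product. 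Once the resulting equation is read under the integral sign, it reduces exactly to (\ref{main_integral_eq_w}), which $w^{\mathfrak{d}_{p}}$ solves by construction. The initial condition $u_{p}(0,z,\epsilon)=0$ follows from the prefactor $|\tau|^{k}$ appearing in the Banach norm.

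For the exponential smallness (\ref{exp_small_difference_u_p}), the key observation is that both $w^{\mathfrak{d}_{p}}$ and $w^{\mathfrak{d}_{p+1}}$ extend analytically on the common cut disc $D(0,\rho)\setminus L_{-}$ and both satisfy (\ref{main_integral_eq_w}) there. Uniqueness of the fixed point of $\mathcal{H}_{\epsilon}$ in $F^{\mathfrak{d}_{p,p+1}}_{(\nu,\beta,\mu,k,\rho)}$ (where $\mathfrak{d}_{p,p+1}$ is any intermediate direction for which $\tau_{l}\notin S_{\mathfrak{d}_{p,p+1}}\cup D(0,\rho)$) forces the two functions to coincide on $D(0,\rho/2)\setminus L_{-}$. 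Consequently, for $\epsilon\in\mathcal{E}_{p}\cap\mathcal{E}_{p+1}$, I would split
\[
u_{p+1}-u_{p}=\frac{k}{(2\pi)^{1/2}}\int_{-\infty}^{+\infty}\!\!\Bigl(\int_{L_{\gamma_{p+1},\rho/2}}\!w^{\mathfrak{d}_{p+1}}\,\mathrm{e}^{-(u/\epsilon t)^{k}}\tfrac{du}{u}-\int_{L_{\gamma_{p},\rho/2}}\!w^{\mathfrak{d}_{p}}\,\mathrm{e}^{-(u/\epsilon t)^{k}}\tfrac{du}{u}+\int_{C_{p,p+1}}\!w\,\mathrm{e}^{-(u/\epsilon t)^{k}}\tfrac{du}{u}\Bigr)\mathrm{e}^{izm}\,dm,
\]
where $L_{\gamma,\rho/2}=[\rho/2,+\infty)e^{\sqrt{-1}\gamma}$ and $C_{p,p+1}$ is the arc of radius $\rho/2$ joining $(\rho/2)e^{\sqrt{-1}\gamma_{p}}$ to $(\rho/2)e^{\sqrt{-1}\gamma_{p+1}}$, produced by Cauchy's theorem from the identification of the two $w$ on $D(0,\rho)\setminus L_{-}$. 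On each of the two tails I would use the exponential bound $|w^{\mathfrak{d}_{*}}(u,m,\epsilon)|\leq\varpi|u|^{k}e^{\nu|u|^{k}}(1+|m|)^{-\mu}e^{-\beta|m|}$ together with the choice $\cos(k(\gamma_{*}-\arg(\epsilon t)))\geq\delta_{1}$ from Definition~1 to produce, after the change $r=|u|$, a bound of the form $\exp(-(\delta_{1}|\epsilon t|^{-k}-\nu)r^{k})$, integrable on $[\rho/2,+\infty)$ for $|\epsilon t|$ small enough, whose value at the lower endpoint contributes $\exp(-\tilde{M}_{p}/|\epsilon|^{k})$. On the arc $C_{p,p+1}$ the function $w$ is uniformly bounded by the local estimate of Definition~4, while $|\exp(-(u/\epsilon t)^{k})|\leq\exp(-\delta_{1}(\rho/2)^{k}/|\epsilon t|^{k})$, yielding the same flatness. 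Combined with the integrable factor in $m$, this produces the claimed bound (\ref{exp_small_difference_u_p}).

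The step I expect to be the delicate one is the contour-deformation and uniqueness argument in the third paragraph: one must justify the coincidence of the two fixed points on the common cut disc by placing them in a single Banach space associated to an intermediate direction and invoking uniqueness once again, and then verify carefully that the estimates of Definition~1 applied simultaneously on both tails and on the arc produce a single constant $M_{p}>0$ after shrinking $\sigma$ to $\sigma'$.
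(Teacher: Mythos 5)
Your proposal follows the paper's proof essentially step by step: invoke Proposition~4 for $w^{\mathfrak{d}_{p}}$, define $u_{p}$ via the Laplace/Fourier integral (\ref{Laplace_Fourier_up}), verify the PDE by running Lemma~2 and the representations (\ref{Moebius_Ugamma_int_rep}), (\ref{expTk_on_Udp}) backwards through the constructions of Section~3, and obtain (\ref{exp_small_difference_u_p}) by the Cauchy-deformation into two tails plus an arc of radius $\rho/2$, with tail and arc estimates using (\ref{bds_w_dp_varpi}) and the phase choice $\cos(k(\gamma-\arg(\epsilon t)))\geq\delta_{1}$. This is exactly the decomposition and bounding strategy the paper uses, so the overall plan is correct.

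One point in your third paragraph would need a small repair. You propose to justify that $w^{\mathfrak{d}_{p}}$ and $w^{\mathfrak{d}_{p+1}}$ coincide on the cut disc by uniqueness in an intermediate-direction space $F^{\mathfrak{d}_{p,p+1}}_{(\nu,\beta,\mu,k,\rho)}$. This is not quite available, because membership of that Banach space requires analyticity on the sector $S_{\mathfrak{d}_{p,p+1}}$ in addition to the cut disc, and neither $w^{\mathfrak{d}_{p}}$ nor $w^{\mathfrak{d}_{p+1}}$ is a priori defined there, so you cannot place them both in that single space and invoke uniqueness of its fixed point. The paper itself only observes, without elaboration, that the $w^{\mathfrak{d}_{p}}$ are all analytic continuations of one common function $w$ on $D(0,\rho)\setminus L_{-}$. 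The cleanest justification for this observation is that the fixed-point operator $\mathcal{H}_{\epsilon}$, when evaluated at $\tau\in D(0,\rho)\setminus L_{-}$, involves only the values of the unknown along paths from $0$ to $\tau^{k}$ (via the substitution $s=\tau^{k}u$, $u\in[0,1]$, used in (\ref{C_kgamma_local_origin})) which remain in the cut disc; combined with the uniform lower bound (\ref{low_bds_Htaum_on_cdisc}), the restricted problem on the cut disc is self-contained and has a unique small solution, forcing all the $w^{\mathfrak{d}_{p}}$ to agree there. With that adjustment, the remainder of your Cauchy-deformation and flatness estimates go through exactly as in the paper.
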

\begin{proof}
Our goal will be to shape true solutions of the main equation (\ref{main_ivp_u}) by performing rearward the sequence of constructions disclosed in
Section 3 departing from the problem (\ref{main_integral_eq_w}) that has been worked out in Section 5.

We select a good covering $\{ \mathcal{E}_{p} \}_{0 \leq p \leq \varsigma - 1}$ in $\mathbb{C}^{\ast}$ for which a set of data
$\{ \{S_{\mathfrak{d}_{p},\theta,\epsilon_{0}r_{\mathcal{T}}} \}_{0 \leq p \leq \varsigma - 1}, \mathcal{T}, \rho \}$ can be associated.
According to Proposition 4, under the assumptions stated in Theorem 1, for suitably chosen moduli $|c_{12}|$, $|c_{f}|$ and
$|c_{l}|$, $1 \leq l \leq D-1$, we observe that for each direction $\mathfrak{d}_{p}$, one can build a solution
$w^{\mathfrak{d}_{p}}(\tau,m,\epsilon)$ of the convolution equation (\ref{main_integral_eq_w}) within the space
$F^{\mathfrak{d}_{p}}_{(\nu,\beta,\mu,k,\rho)}$ that fulfills the next estimates
\begin{equation}
|w^{\mathfrak{d}_{p}}(\tau,m,\epsilon)| \leq \varpi_{\mathfrak{d}_{p}} (1 + |m|)^{-\mu} e^{-\beta |m|}
\frac{|\tau|^{k}}{1 + |\tau|^{2k}} \exp( \nu |\tau|^{k} ) \label{bds_w_dp_varpi}
\end{equation}
for all $\tau \in S_{\mathfrak{d}_{p}} \cup (D(0,\rho) \setminus L_{-})$, all $m \in \mathbb{R}$, all $\epsilon \in D(0,\epsilon_{0})$, for
well chosen $\varpi_{\mathfrak{d}_{p}}>0$. For later need, we see in particular that $w^{\mathfrak{d}_{p}}(\tau,m,\epsilon)$ are analytic
continuation w.r.t $\tau$
of a common holomorphic function that we call $\tau \mapsto w(\tau,m,\epsilon)$ whenever $\tau \in D(0,\rho) \setminus L_{-}$ which
satisfies likewise the bounds above (\ref{bds_w_dp_varpi}) provided that $m \in \mathbb{R}$, $\epsilon \in D(0,\epsilon_{0})$.

As a consequence, the Laplace transform of order $k$ and Fourier inverse transform
$$ U_{\gamma_{p}}(T,z,\epsilon) = \frac{k}{(2\pi)^{1/2}} \int_{-\infty}^{+\infty}
\int_{L_{\gamma_{p}}} w^{\mathfrak{d}_{p}}(u,m,\epsilon)
\exp( -(\frac{u}{T})^{k} ) e^{izm} \frac{du}{u} dm $$
along a halfline $L_{\gamma_{p}} \subset S_{\mathfrak{d}_{p}} \cup \{ 0 \}$
represents\\
1) A holomorphic bounded function w.r.t $T$ on a sector $S_{\mathfrak{d}_{p},\theta,\varrho}$ with bisecting direction $\mathfrak{d}_{p}$,
aperture $\frac{\pi}{k} < \theta < \frac{\pi}{k} + \mathrm{Ap}(S_{\mathfrak{d}_p})$, radius $\varrho$, where
$\mathrm{Ap}(S_{\mathfrak{d}_p})$ stands for the aperture of $S_{\mathfrak{d}_{p}}$, for some real number $\varrho>0$.\\
2) A holomorphic bounded application w.r.t $z$ on $H_{\beta'}$ for any given $0 < \beta' < \beta$.\\
3) A holomorphic bounded map w.r.t $\epsilon$ on $D(0,\epsilon_{0})$.

Furthermore, the integral representation (\ref{Moebius_Ugamma_int_rep}) accompanied with the bounds (\ref{bds_exp_kappaCk_Sd}),
(\ref{bds_exp_kappaCk_origin}) shows that $U_{\gamma_{p}}(\frac{T}{1 + \kappa_{l}T},z,\epsilon)$ defines a holomorphic bounded function
w.r.t $T$ on a sector $S_{\mathfrak{d}_{p},\theta,\varrho_{1}}$ for some $0 < \varrho_{1} < \varrho$ and the same
$\theta$ as above in 1). Besides, a direct computation yields that
\begin{multline}
\exp( \alpha_{D} T^{k+1} \partial_{T} ) U_{\gamma_{p}}(T,z,\epsilon) \\
=
\frac{k}{(2\pi)^{1/2}} \int_{-\infty}^{+\infty} \int_{L_{\gamma_{p}}} \exp( \alpha_{D} ku^{k} ) w^{\mathfrak{d}_{p}}(u,m,\epsilon)
\exp( -(\frac{u}{T})^{k} ) e^{izm} \frac{du}{u} dm \label{expTk_on_Udp}
\end{multline}
prescribes a holomorphic bounded function w.r.t $T$ on a sector $S_{\mathfrak{d}_{p},\theta,\varrho_{2}}$ for some $0 < \varrho_{2} < \varrho$
for the overhead value of $\theta$.

Finally, by combining these integral representations (\ref{Moebius_Ugamma_int_rep}), (\ref{expTk_on_Udp})
together with those (\ref{TkpartialTUgamma}), (\ref{TmUgamma}) and (\ref{QUgammaQUgamma}) displayed in Lemma 2, from
the fact that $w^{\mathfrak{d}_{p}}$ solves (\ref{main_integral_eq_w}), we obtain that $U_{\gamma_{p}}(T,z,\epsilon)$ solves the equation
(\ref{main_ivp_U_prep_form}) and hence the equation (\ref{main_ivp_U}) whenever $T$ belongs to some sector
$S_{\mathfrak{d}_{p},\theta,\varrho_{3}}$ where $0 < \varrho_{3} < \varrho$, if $z$ lies within $H_{\beta'}$ and $\epsilon \in D(0,\epsilon_{0})$.

As a result, the function
$$ u_{p}(t,z,\epsilon) = U_{\gamma_{p}}(\epsilon t,z,\epsilon) $$
defines a bounded holomorphic function w.r.t $t$ on $\mathcal{T} \cap D(0,\sigma)$ for some $\sigma>0$ small enough,
$\epsilon \in \mathcal{E}_{p}$, $z \in H_{\beta'}$ for any given $0 < \beta' < \beta$, owing to the fact that the sectors $\mathcal{E}_{p}$
and $\mathcal{T}$ from the associated data fulfill the crucial feature (\ref{epsilon_t_in_sector}). Moreover,
$u_{p}(t,z,\epsilon)$ solves the main initial value problem (\ref{main_ivp_u}) on the domain described above
$(\mathcal{T} \cap D(0,\sigma)) \times H_{\beta'} \times \mathcal{E}_{p}$, for all $0 \leq p \leq \varsigma - 1$.

In the final part of the proof, we are concerned with the bounds (\ref{exp_small_difference_u_p}). The steps of verification are comparable
to the arguments displayed in Theorem 1 of \cite{lama1} but we still decide to present the details for the benefit of clarity.

By construction, the map $u \mapsto w(u,m,\epsilon) \exp( -(\frac{u}{\epsilon t})^{k} )/u$ represents a
holomorphic function on $D(0,\rho) \setminus L_{-}$ for all
$(m,\epsilon) \in \mathbb{R} \times D(0,\epsilon_{0})$. Therefore, its integral along the union of a segment joining
0 to $(\rho/2)e^{\sqrt{-1}\gamma_{p+1}}$ followed by an arc of circle with radius $\rho/2$ which relies
$(\rho/2)e^{\sqrt{-1}\gamma_{p+1}}$ and $(\rho/2)e^{\sqrt{-1}\gamma_{p}}$ and ending with a segment starting from
$(\rho/2)e^{\sqrt{-1}\gamma_{p}}$ to 0, is vanishing. The Cauchy formula allows us to write the difference $u_{p+1} - u_{p}$ as a sum of three
integrals,
\begin{multline}
u_{p+1}(t,z,\epsilon) - u_{p}(t,z,\epsilon) = \frac{k}{(2\pi)^{1/2}}\int_{-\infty}^{+\infty}
\int_{L_{\rho/2,\gamma_{p+1}}}
w^{\mathfrak{d}_{p+1}}(u,m,\epsilon) e^{-(\frac{u}{\epsilon t})^{k}} e^{izm} \frac{du}{u} dm\\ -
\frac{k}{(2\pi)^{1/2}}\int_{-\infty}^{+\infty}
\int_{L_{\rho/2,\gamma_{p}}}
w^{\mathfrak{d}_p}(u,m,\epsilon) e^{-(\frac{u}{\epsilon t})^{k}} e^{izm} \frac{du}{u} dm\\
+ \frac{k}{(2\pi)^{1/2}}\int_{-\infty}^{+\infty}
\int_{C_{\rho/2,\gamma_{p},\gamma_{p+1}}}
w(u,m,\epsilon) e^{-(\frac{u}{\epsilon t})^{k}} e^{izm} \frac{du}{u} dm \label{difference_u_p_decomposition}
\end{multline}
where $L_{\rho/2,\gamma_{p+1}} = [\rho/2,+\infty)e^{\sqrt{-1}\gamma_{p+1}}$,
$L_{\rho/2,\gamma_{p}} = [\rho/2,+\infty)e^{\sqrt{-1}\gamma_{p}}$ and
$C_{\rho/2,\gamma_{p},\gamma_{p+1}}$ stands for an arc of circle with radius connecting
$(\rho/2)e^{\sqrt{-1}\gamma_{p}}$ and $(\rho/2)e^{\sqrt{-1}\gamma_{p+1}}$ with a well chosen orientation.\medskip

We first provide bounds for the front part of the decomposition (\ref{difference_u_p_decomposition}), namely
$$ I_{1} = \left| \frac{k}{(2\pi)^{1/2}}\int_{-\infty}^{+\infty}
\int_{L_{\rho/2,\gamma_{p+1}}}
w^{\mathfrak{d}_{p+1}}(u,m,\epsilon) e^{-(\frac{u}{\epsilon t})^{k}} e^{izm} \frac{du}{u} dm \right|.
$$
By construction, the direction $\gamma_{p+1}$ (which may depend on $\epsilon t$) is chosen in such a way that
$\cos( k( \gamma_{p+1} - \mathrm{arg}(\epsilon t) )) \geq \delta_{1}$, for all
$\epsilon \in \mathcal{E}_{p} \cap \mathcal{E}_{p+1}$, all $t \in \mathcal{T} \cap D(0,\sigma)$, for some fixed $\delta_{1} > 0$.
From the estimates (\ref{bds_w_dp_varpi}), we get that
\begin{multline}
I_{1} \leq \frac{k}{(2\pi)^{1/2}} \int_{-\infty}^{+\infty} \int_{\rho/2}^{+\infty}
\varpi_{\mathfrak{d}_{p+1}}(1+|m|)^{-\mu} e^{-\beta|m|}
\frac{r^{k}}{1 + r^{2k} } \\
\times \exp( \nu r^{k} )
\exp(-\frac{\cos(k(\gamma_{p+1} - \mathrm{arg}(\epsilon t)))}{|\epsilon t|^{k}}r^{k}) e^{-m\mathrm{Im}(z)} \frac{dr}{r} dm\\
\leq \frac{k\varpi_{\mathfrak{d}_{p+1}}}{(2\pi)^{1/2}} \int_{-\infty}^{+\infty} e^{-(\beta - \beta')|m|} dm
\int_{\rho/2}^{+\infty} r^{k-1} \exp( -(\frac{\delta_{1}}{|t|^{k}} - \nu |\epsilon|^{k})(\frac{r}{|\epsilon|})^{k} ) dr\\
\leq  \frac{2k\varpi_{\mathfrak{d}_{p+1}}}{(2\pi)^{1/2}} \int_{0}^{+\infty} e^{-(\beta - \beta')m} dm
\int_{\rho/2}^{+\infty} \frac{|\epsilon|^{k}}{(\frac{\delta_{1}}{|t|^{k}} - \nu |\epsilon|^{k}) k}
\times\\
\left\{ \frac{(\frac{\delta_{1}}{|t|^{k}} - \nu |\epsilon|^{k}) }{|\epsilon|^{k}} kr^{k-1}
\exp( -(\frac{\delta_{1}}{|t|^{k}} - \nu |\epsilon|^{k})(\frac{r}{|\epsilon|})^{k} ) \right\} dr \leq
\frac{2k\varpi_{\mathfrak{d}_{p+1}}}{(2\pi)^{1/2}} \frac{ |\epsilon|^{k} }{(\beta - \beta')( \frac{\delta_{1}}{|t|^{k}} - \nu |\epsilon|^{k})k}\\
\times
\exp( - (\frac{\delta_{1}}{|t|^{k}} - \nu |\epsilon|^{k}) (\frac{\rho/2}{|\epsilon|})^{k} ) \leq 
\frac{2k\varpi_{\mathfrak{d}_{p+1}}}{(2\pi)^{1/2}} \frac{|\epsilon|^{k}}{(\beta - \beta') \delta_{2}k }
\exp( -\delta_{2} (\frac{\rho/2}{|\epsilon|})^{k} ) \label{I_1_exp_small_order_k}
\end{multline}
for all $t \in \mathcal{T} \cap D(0,\sigma)$ and $z \in H_{\beta'}$ with
$|t| < (\frac{\delta_{1}}{\delta_{2} + \nu \epsilon_{0}^{k}})^{1/k}$, for some $\delta_{2}>0$, whenever
$\epsilon \in \mathcal{E}_{p} \cap \mathcal{E}_{p+1}$.\medskip

In a similar manner, we supply estimates for the middle part of (\ref{difference_u_p_decomposition}), especially
$$ I_{2} = \left| \frac{k}{(2\pi)^{1/2}}\int_{-\infty}^{+\infty}
\int_{L_{\rho/2,\gamma_{p}}}
w^{\mathfrak{d}_{p}}(u,m,\epsilon) e^{-(\frac{u}{\epsilon t})^{k}} e^{izm} \frac{du}{u} dm \right|.
$$
As above, the direction $\gamma_{p}$ (which relies on $\epsilon t$) is taken in a way that
$\cos( k( \gamma_{p} - \mathrm{arg}(\epsilon t) )) \geq \delta_{1}$, for all
$\epsilon \in \mathcal{E}_{p} \cap \mathcal{E}_{p+1}$, all $t \in \mathcal{T} \cap D(0,\sigma)$, for some fixed $\delta_{1} > 0$.
Again, accordingly to (\ref{bds_w_dp_varpi}) we obtain
\begin{equation}
I_{2} \leq \frac{2k\varpi_{\mathfrak{d}_{p}}}{(2\pi)^{1/2}} \frac{|\epsilon|^{k}}{(\beta - \beta')
\delta_{2}k} \exp( -\delta_{2} \frac{(\rho/2)^k}{|\epsilon|^k} ) \label{I_2_exp_small_order_k}
\end{equation}
for all $t \in \mathcal{T} \cap D(0,\sigma)$ and $z \in H_{\beta'}$ with
$|t| < (\frac{\delta_{1}}{\delta_{2} + \nu \epsilon_{0}^{k}})^{1/k}$, for some $\delta_{2}>0$, for all
$\epsilon \in \mathcal{E}_{p} \cap \mathcal{E}_{p+1}$.\medskip

Lastly, we deal with the remaining piece of (\ref{difference_u_p_decomposition}), that is 
$$
I_{3} = \left| \frac{k}{(2\pi)^{1/2}}\int_{-\infty}^{+\infty}
\int_{C_{\rho/2,\gamma_{p},\gamma_{p+1}}}
w(u,m,\epsilon) e^{-(\frac{u}{\epsilon t})^{k}} e^{izm} \frac{du}{u} dm \right|.
$$
By construction, the arc of circle $C_{\rho/2,\gamma_{p},\gamma_{p+1}}$ is chosen appropriately in order that
$\cos(k(\theta - \mathrm{arg}(\epsilon t))) \geq \delta_{1}$, for all $\theta \in [\gamma_{p},\gamma_{p+1}]$ (if
$\gamma_{p} < \gamma_{p+1}$), $\theta \in [\gamma_{p+1},\gamma_{p}]$ (if
$\gamma_{p+1} < \gamma_{p}$), for all $t \in \mathcal{T}$, all $\epsilon \in \mathcal{E}_{p} \cap \mathcal{E}_{p+1}$, for some
fixed $\delta_{1}>0$.\medskip

Owing to (\ref{bds_w_dp_varpi}) we notice that
\begin{multline}
I_{3} \leq \frac{k}{(2\pi)^{1/2}} \int_{-\infty}^{+\infty}  \left| \int_{\gamma_{p}}^{\gamma_{p+1}} \right.
\max_{0 \leq p \leq \varsigma - 1} \varpi_{\mathfrak{d}_p} (1+|m|)^{-\mu} e^{-\beta|m|}
\frac{ (\rho/2)^{k}}{1 + (\rho/2)^{2k} } \exp( \nu (\rho/2)^{k} )\\
\times
\exp(-\frac{\cos(k(\theta - \mathrm{arg}(\epsilon t)))}{|\epsilon t|^{k}}(\frac{\rho}{2})^{k})
\left. e^{-m\mathrm{Im}(z)} d\theta \right| dm \leq \frac{k \max_{0 \leq p \leq \varsigma - 1} \varpi_{\mathfrak{d}_p} }{(2\pi)^{1/2}}
\int_{-\infty}^{+\infty} e^{-(\beta - \beta')|m|} dm \\
\times |\gamma_{p} - \gamma_{p+1}| (\rho/2)^{k}
\exp( - (\frac{\delta_{1}}{|t|^{k}} - \nu |\epsilon|^{k}) (\frac{\rho/2}{|\epsilon|})^{k} ) \\
\leq
\frac{2k \max_{0 \leq p \leq \varsigma - 1} \varpi_{\mathfrak{d}_p} }{(2\pi)^{1/2}}
\frac{|\gamma_{p} - \gamma_{p+1}|(\rho/2)^{k}}{\beta - \beta'} \exp( -\delta_{2} \frac{(\rho/2)^{k}}{|\epsilon|^{k}} )
\label{I_3_exp_small_order_k}
\end{multline}
for all $t \in \mathcal{T} \cap D(0,\sigma)$ and $z \in H_{\beta'}$ whenever
$|t| < (\frac{\delta_{1}}{\delta_{2} + \nu \epsilon_{0}^{k}})^{1/k}$, for some $\delta_{2}>0$, for all
$\epsilon \in \mathcal{E}_{p} \cap \mathcal{E}_{p+1}$.\medskip

Ultimately, by collecting the three above inequalities (\ref{I_1_exp_small_order_k}), (\ref{I_2_exp_small_order_k}) and
(\ref{I_3_exp_small_order_k}), we figure out that
\begin{multline*}
|u_{p+1}(t,z,\epsilon) - u_{p}(t,z,\epsilon)| \\
\leq
( \frac{2k(\varpi_{\mathfrak{d}_{p}} + \varpi_{\mathfrak{d}_{p+1}})}{(2\pi)^{1/2}} \frac{|\epsilon|^{k}}{(\beta - \beta')
\delta_{2}k} + \frac{2k \max_{0 \leq p \leq \varsigma - 1} \varpi_{\mathfrak{d}_p} }{(2\pi)^{1/2}}
\frac{|\gamma_{p} - \gamma_{p+1}|(\rho/2)^{k}}{\beta - \beta'} )\\
\times \exp( -\delta_{2} \frac{(\rho/2)^k}{|\epsilon|^k} )
\end{multline*}
for all $t \in \mathcal{T} \cap D(0,\sigma)$ and $z \in H_{\beta'}$ with
$|t| < (\frac{\delta_{1}}{\delta_{2} + \nu \epsilon_{0}^{k}})^{1/k}$, for some $\delta_{2}>0$, for all
$\epsilon \in \mathcal{E}_{p} \cap \mathcal{E}_{p+1}$. The forecast inequality
(\ref{exp_small_difference_u_p}) follows.
\end{proof}

\section{Parametric Gevrey asymptotic expansions of order $1/k$ of the solutions}

\subsection{Gevrey asymptotic expansions of order $1/k$ and $k-$summable formal series}

We first remind the reader the concept of $k-$summability of formal series with coefficients in a Banach space as defined in
classical textbooks such as \cite{ba}.

\begin{defin} We set $(\mathbb{F},||.||_{\mathbb{F}})$ as a complex Banach space. Let $k \in (\frac{1}{2},1)$ be a real number. A formal series
$$\hat{a}(\epsilon) = \sum_{j=0}^{\infty}  a_{j}  \epsilon^{j} \in \mathbb{F}[[\epsilon]]$$
with coefficients belonging to $( \mathbb{F}, ||.||_{\mathbb{F}} )$ is called $k-$summable
with respect to $\epsilon$ in the direction $d \in \mathbb{R}$ if \medskip

{\bf i)} One can select a radius $\rho \in \mathbb{R}_{+}$ in a way that the formal series, called formal
Borel transform of order $k$ of $\hat{a}$,
$$ B_{k}(\hat{a})(\tau) = \sum_{j=0}^{\infty} \frac{ a_{j} \tau^{j}  }{ \Gamma(1 + \frac{j}{k}) } \in \mathbb{F}[[\tau]],$$
converge absolutely for $|\tau| < \rho$. \medskip

{\bf ii)} One can find an aperture $2\delta > 0$ such that the series $B_{k}(\hat{a})(\tau)$ can be analytically continued with
respect to $\tau$ on the unbounded sector
$S_{d,\delta} = \{ \tau \in \mathbb{C}^{\ast} : |d - \mathrm{arg}(\tau) | < \delta \} $. Moreover, there exist $C >0$ and $K >0$
with
$$ ||B_{k}(\hat{a})(\tau)||_{\mathbb{F}}
\leq C e^{ K|\tau|^{k}} $$
for all $\tau \in S_{d, \delta}$.
\end{defin}
If the conditions above are fulfilled, the vector valued Laplace transform of order $k$ of $B_{k}(\hat{a})(\tau)$
in the direction $d$ is defined by
$$ L^{d}_{k}(B_{k}(\hat{a}))(\epsilon) = \epsilon^{-k} \int_{L_{\gamma}}
B_{k}(\hat{a})(u) e^{ - ( u/\epsilon )^{k} } ku^{k-1}du,$$
along a half-line $L_{\gamma} = \mathbb{R}_{+}e^{\sqrt{-1}\gamma} \subset S_{d,\delta} \cup \{ 0 \}$, where $\gamma$ depends on
$\epsilon$ and is chosen in such a way that
$\cos(k(\gamma - \mathrm{arg}(\epsilon))) \geq \delta_{1} > 0$, for some fixed $\delta_{1}$, for all
$\epsilon$ in a sector
$$ S_{d,\theta,R^{1/k}} = \{ \epsilon \in \mathbb{C}^{\ast} : |\epsilon| < R^{1/k} \ \ , \ \ |d - \mathrm{arg}(\epsilon) |
< \theta/2 \},$$
where the angle $\theta$ and radius $R$ suffer the next restrictions, $\frac{\pi}{k} < \theta < \frac{\pi}{k} + 2\delta$ and $0 < R < \delta_{1}/K$.

Notice that this Laplace transform of
order $k$ differs slightly from the one introduced in Definition 1 which turns out to be more suitable for the problems under study in this work.

The function $L^{d}_{k}(B_{k}(\hat{a}))(\epsilon)$
is called the $k-$sum of the formal series $\hat{a}(\epsilon)$ in the direction $d$. It represents a bounded and holomorphic function on the sector
$S_{d,\theta,R^{1/k}}$ and is the \emph{unique} such function that possesses the formal series $\hat{a}(\epsilon)$ as Gevrey asymptotic
expansion of order $1/k$ with respect to $\epsilon$ on $S_{d,\theta,R^{1/k}}$ which means that for all
$\frac{\pi}{k} < \theta_{1} < \theta$, there exist $C,M > 0$ such that
$$ ||L^{d}_{k}(B_{k}(\hat{a}))(\epsilon) - \sum_{p=0}^{n-1}
a_{p} \epsilon^{p}||_{\mathbb{F}} \leq CM^{n}\Gamma(1+ \frac{n}{k})|\epsilon|^{n} $$
for all $n \geq 1$, all $\epsilon \in S_{d,\theta_{1},R^{1/k}}$.\medskip

In the sequel, we present a cohomological criterion for the existence of Gevrey asymptotics of order $1/k$ for suitable families of sectorial
holomorphic functions and $k-$summability of formal series with coefficients in Banach spaces (see
\cite{ba2}, p. 121 or \cite{hssi}, Lemma XI-2-6) which is known as the Ramis-Sibuya theorem in the literature. This result
is an essential tool in the proof of our second main statement (Theorem 2).\medskip

\noindent {\bf Theorem (RS)} {\it Let $(\mathbb{F},||.||_{\mathbb{F}})$ be a Banach space over $\mathbb{C}$ and
$\{ \mathcal{E}_{p} \}_{0 \leq p \leq \varsigma-1}$ be a good covering in $\mathbb{C}^{\ast}$. For all
$0 \leq p \leq \varsigma - 1$, let $G_{p}$ be a holomorphic function from $\mathcal{E}_{p}$ into
the Banach space $(\mathbb{F},||.||_{\mathbb{F}})$ and let the cocycle $\Theta_{p}(\epsilon) = G_{p+1}(\epsilon) - G_{p}(\epsilon)$
be a holomorphic function from the sector $Z_{p} = \mathcal{E}_{p+1} \cap \mathcal{E}_{p}$ into $\mathbb{E}$
(with the convention that $\mathcal{E}_{\varsigma} = \mathcal{E}_{0}$ and $G_{\varsigma} = G_{0}$).
We make the following assumptions.\medskip

\noindent {\bf 1)} The functions $G_{p}(\epsilon)$ are bounded as $\epsilon \in \mathcal{E}_{p}$ tends to the origin
in $\mathbb{C}$, for all $0 \leq p \leq \varsigma - 1$.\medskip

\noindent {\bf 2)} The functions $\Theta_{p}(\epsilon)$ are exponentially flat of order $k$ on $Z_{p}$, for all
$0 \leq p \leq \varsigma-1$. This means that there exist constants $C_{p},A_{p}>0$ such that
$$ ||\Theta_{p}(\epsilon)||_{\mathbb{F}} \leq C_{p}e^{-A_{p}/|\epsilon|^{k}} $$
for all $\epsilon \in Z_{p}$, all $0 \leq p \leq \varsigma-1$.\medskip

Then, for all $0 \leq p \leq \varsigma - 1$, the functions $G_{p}(\epsilon)$ have a common formal power series
$\hat{G}(\epsilon) \in \mathbb{F}[[\epsilon]]$ as Gevrey asymptotic expansion of order $1/k$ on $\mathcal{E}_{p}$. Moreover,
if the aperture of one sector $\mathcal{E}_{p_0}$ is slightly larger than $\pi/k$, then $G_{p_0}(\epsilon)$ represents
the $k-$sum of $\hat{G}(\epsilon)$ on $\mathcal{E}_{p_0}$.}

\subsection{Gevrey asymptotic expansion in the complex parameter for the analytic solutions to the initial value problem}

Within this subsection, we disclose the second central result of our work, namely we establish the existence of a formal power series
in the parameter $\epsilon$ whose coefficients are bounded holomorphic
functions on the product of a sector $\mathcal{T}$ with small radius centered at 0 and a strip $H_{\beta'}$ in $\mathbb{C}^2$, which
represent the common Gevrey asymptotic expansion of order $1/k$ of the actual solutions
$u_{p}(t,z,\epsilon)$ of (\ref{main_ivp_u}) constructed in Theorem 1.\medskip

\noindent The second main result of this work can be stated as follows.

\begin{theo} We set $\mathbb{F}$ as the Banach space of complex valued bounded holomorphic functions on the product
$(\mathcal{T} \cap D(0,\sigma')) \times H_{\beta'}$ endowed with the supremum norm where the sector $\mathcal{T}$, radius $\sigma'>0$ and
width $\beta'>0$ are determined in Theorem 1. For all $0 \leq p \leq \varsigma - 1$, the holomorphic and bounded functions
$\epsilon \mapsto u_{p}(t,z,\epsilon)$
from $\mathcal{E}_{p}$ into $\mathbb{F}$ built up in Theorem 1 possess a formal power series
$$ \hat{u}(t,z,\epsilon) = \sum_{m \geq 0} h_{m}(t,z) \epsilon^{m} \in \mathbb{F}[[\epsilon ]]$$
as Gevrey asymptotic expansion of order $1/k$. Strictly speaking, for all $0 \leq p \leq \varsigma-1$, we can pick up two constants
$C_{p},M_{p}>0$ with
$$ \sup_{t \in \mathcal{T} \cap D(0,\sigma'),z \in H_{\beta'}} |u_{p}(t,z,\epsilon) - \sum_{m=0}^{n-1} h_{m}(t,z) \epsilon^{m}|
\leq C_{p}M_{p}^{n}\Gamma(1 + \frac{n}{k}) |\epsilon|^{n}
$$
for all $n \geq 1$, whenever $\epsilon \in \mathcal{E}_{p}$. Furthermore, if the aperture of one sector $\mathcal{E}_{p_0}$ can be taken
slightly larger than
$\pi/k$, then the map $\epsilon \mapsto u_{p_0}(t,z,\epsilon)$ is promoted as the $k-$sum of $\hat{u}(t,z,\epsilon)$ on $\mathcal{E}_{p_0}$.
\end{theo}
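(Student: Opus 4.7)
The strategy is a direct application of the Ramis-Sibuya theorem (Theorem (RS) recalled in Section 7.1) to the family of solutions constructed in Theorem 1. The plan is to view each $u_{p}(t,z,\epsilon)$ not as a scalar function but as a holomorphic map from the sector $\mathcal{E}_{p}$ into the Banach space $\mathbb{F}$ of bounded holomorphic functions on $(\mathcal{T}\cap D(0,\sigma'))\times H_{\beta'}$ equipped with the sup norm; the hypotheses of (RS) can then be verified word-for-word from the two conclusions of Theorem 1.

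For $0\leq p\leq \varsigma-1$ define $G_{p}:\mathcal{E}_{p}\to\mathbb{F}$ by $G_{p}(\epsilon)=u_{p}(\cdot,\cdot,\epsilon)$. The first hypothesis of (RS), namely that each $G_{p}$ is bounded near the origin, is immediate from the construction in Theorem 1, since $u_{p}$ was shown to be a bounded holomorphic function on $(\mathcal{T}\cap D(0,\sigma))\times H_{\beta'}\times \mathcal{E}_{p}$. The second hypothesis concerns the cocycle $\Theta_{p}(\epsilon)=G_{p+1}(\epsilon)-G_{p}(\epsilon)$ defined on $Z_{p}=\mathcal{E}_{p+1}\cap\mathcal{E}_{p}$: this is precisely controlled by the flatness estimate (\ref{exp_small_difference_u_p}), which reads
\[
\|\Theta_{p}(\epsilon)\|_{\mathbb{F}}=\sup_{t\in\mathcal{T}\cap D(0,\sigma'),\,z\in H_{\beta'}}|u_{p+1}(t,z,\epsilon)-u_{p}(t,z,\epsilon)|\leq K_{p}\exp(-M_{p}/|\epsilon|^{k}),
\]
so the cocycle is exponentially flat of order exactly $k$.

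Having verified the hypotheses, Theorem (RS) produces a formal series $\hat{G}(\epsilon)=\sum_{m\geq 0}h_{m}\epsilon^{m}\in\mathbb{F}[[\epsilon]]$ such that, for every $p$, one has the Gevrey-$1/k$ bounds
\[
\Big\|G_{p}(\epsilon)-\sum_{m=0}^{n-1}h_{m}\epsilon^{m}\Big\|_{\mathbb{F}}\leq C_{p}M_{p}^{n}\Gamma(1+n/k)|\epsilon|^{n}
\]
for some $C_{p},M_{p}>0$ and all $n\geq 1$, $\epsilon\in\mathcal{E}_{p}$. Reinterpreting the norm in $\mathbb{F}$ as a supremum over $(t,z)$ and writing $h_{m}=h_{m}(t,z)$, these are exactly the estimates asserted in the statement, and the coefficients $h_{m}(t,z)$ are automatically bounded holomorphic functions on $(\mathcal{T}\cap D(0,\sigma'))\times H_{\beta'}$ because they are elements of $\mathbb{F}$. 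Finally, the very last clause follows from the second assertion in (RS): whenever one sector $\mathcal{E}_{p_{0}}$ has opening strictly larger than $\pi/k$, the function $G_{p_{0}}(\epsilon)=u_{p_{0}}(t,z,\epsilon)$ is the $k$-sum of $\hat{u}(t,z,\epsilon)$ on $\mathcal{E}_{p_{0}}$ in the sense of Definition 7.

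There is essentially no hard step left: all the analytic work (construction of the $w^{\mathfrak{d}_{p}}$ via the fixed-point argument of Proposition 4, the Laplace/Fourier representation, and the delicate path-deformation argument producing the exponential flatness of order $k$) was already carried out in Sections 5 and 6. The only point that requires a moment of care is to confirm that the Banach space valued version of (RS) applies with the norm on $\mathbb{F}$: this amounts to observing that the pointwise-in-$(t,z)$ bound (\ref{exp_small_difference_u_p}) is uniform on the compact sub-domain $(\mathcal{T}\cap D(0,\sigma'))\times H_{\beta'}$, so it truly gives an estimate on $\|\Theta_{p}(\epsilon)\|_{\mathbb{F}}$ rather than just a family of pointwise estimates; this is built in to the statement of Theorem 1, which is why the constants $K_{p},M_{p}$ there are independent of $(t,z,\epsilon)$.
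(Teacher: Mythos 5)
Your proposal is correct and follows essentially the same route as the paper: you define $G_{p}(\epsilon)=u_{p}(\cdot,\cdot,\epsilon)$ as an $\mathbb{F}$-valued holomorphic map, verify the boundedness hypothesis and the exponential flatness of order $k$ of the cocycle $\Theta_{p}$ using (\ref{exp_small_difference_u_p}), and conclude by the Ramis--Sibuya theorem, including its second assertion for the $k$-summability statement. This is exactly the paper's argument.
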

\begin{proof} We focus on the family of functions $u_{p}(t,z,\epsilon)$, $0 \leq p \leq \varsigma-1$ constructed in Theorem 1.
For all $0 \leq p \leq \varsigma-1$, we define $G_{p}(\epsilon) := (t,z) \mapsto u_{p}(t,z,\epsilon)$, which represents by construction a
holomorphic and bounded function from $\mathcal{E}_{p}$ into the Banach space $\mathbb{F}$ of bounded holomorphic functions on
$(\mathcal{T} \cap D(0,\sigma')) \times H_{\beta'}$ equipped with the supremum norm, where $\mathcal{T}$ is a bounded sector selected in Theorem 1,
the radius $\sigma'>0$ is taken small enough and $H_{\beta'}$ is a horizontal strip of width $0 < \beta' < \beta$. In accordance with the
bounds (\ref{exp_small_difference_u_p}), we deduce that the cocycle
$\Theta_{p}(\epsilon) = G_{p+1}(\epsilon) - G_{p}(\epsilon)$ is exponentially flat of order $k$ on
$Z_{p} = \mathcal{E}_{p} \cap \mathcal{E}_{p+1}$, for any $0 \leq p \leq \varsigma-1$.

Owing to Theorem (RS) displayed overhead, we obtain a formal power series $\hat{G}(\epsilon) \in \mathbb{F}[[\epsilon]]$
which represents the Gevrey asymptotic expansion of order $1/k$ of each $G_{p}(\epsilon)$ on $\mathcal{E}_{p}$, for $0 \leq p \leq \varsigma - 1$.
Besides, when the aperture of one sector $\mathcal{E}_{p_0}$ is slightly larger than $\pi/k$, the function $G_{p_0}(\epsilon)$ defines
the $k-$sum of $\hat{G}(\epsilon)$ on $\mathcal{E}_{p_0}$ as described through Definition 7.
\end{proof}

\noindent \textbf{Acknowledgements.}  A. Lastra and S. Malek are supported by the Spanish Ministerio de Econom\'{\i}a, Industria y Competitividad under the Project MTM2016-77642-C2-1-P.

\end{document}